\newtheorem{thm}{Theorem}[section]
\newtheorem{thmMain}{Theorem}
\newtheorem*{thm1}{Theorem}
\newtheorem{prop}[thm]{Proposition}
\newtheorem{ques}{Question}
\newtheorem{cor}[thm]{Corollary}
\newtheorem{corMain}[thmMain]{Corollary}
\newtheorem{lem}[thm]{Lemma}
\theoremstyle{definition}
\newtheorem{rem}{Remark}
\newtheorem{def1}{Definition}
\newcommand{\ra}{\rightarrow}
\newcommand{\bk}{\backslash}
\newcommand{\mc}{\mathcal}
\newcommand{\mf}{\mathfrak}
\newcommand{\mb}{\mathbb}
\newcommand{\sg}{\sigma}
\renewcommand{\ss}{\substack}
\newcommand{\llf}{\left\lfloor}
\newcommand{\e}{\varepsilon}
\newcommand{\rrf}{\right\rfloor}
\renewcommand{\bar}{\overline}
\begin{document}

\title{Large Sums of High Order Characters}
\author{Alexander P. Mangerel}
\address{Department of Mathematical Sciences, Durham University, Stockton Road, Durham, DH1 3LE, UK}
\email{smangerel@gmail.com}
\maketitle

\begin{abstract}
Let $\chi$ be a primitive character modulo a prime $q$, and let $\delta > 0$. It has previously been observed that if $\chi$ has large order $d \geq d_0(\delta)$ then $\chi(n) \neq 1$ for some $n \leq q^{\delta}$, in analogy with Vinogradov's conjecture on quadratic non-residues. We give a new and simple proof of this fact. We show, furthermore, that if $d$ is squarefree then for \emph{any} $d$th root of unity $\alpha$ the number of $n \leq x$ such that $\chi(n) = \alpha$ is $o_{d \ra \infty}(x)$ whenever $x > q^\delta$. Consequently, when $\chi$ has sufficiently large order the sequence $(\chi(n))_{n \leq q^\delta}$ cannot cluster near $1$ for any $\delta > 0$.

Our proof relies on a second moment estimate for short sums of the characters $\chi^\ell$, averaged over $1 \leq \ell \leq d-1$, that is non-trivial whenever $d$ has no small prime factors. In particular, given any $\delta > 0$ we show that for all but $o(d)$ powers $1 \leq \ell \leq d-1$, the partial sums of $\chi^\ell$ exhibit cancellation in intervals $n \leq q^\delta$ as long as $d \geq d_0(\delta)$ is prime, going beyond Burgess' theorem. Our argument blends together results from pretentious number theory and additive combinatorics.

Finally, we show that, \emph{uniformly} over prime $3 \leq d \leq q-1$, the P\'{o}lya-Vinogradov inequality may be improved for $\chi^\ell$ on average over $1 \leq \ell \leq d-1$, extending work of Granville and Soundararajan.
\end{abstract}
%\subjclass[2010]{11N25, 11N64, 11B13}

\section{Introduction and Main Results}
\subsection{Background}
Understanding the value distribution of Dirichlet characters is a central theme in analytic number theory. An old and famous conjecture of I.M. Vinogradov predicts that the least quadratic non-residue $n_p$ modulo a prime $p$ satisfies $n_p \ll_{\delta} p^{\delta}$ as $p \ra \infty$ for any $\delta > 0$. 
This conjecture, relating to negative values of the Legendre symbol $\left(\tfrac{\cdot}{p}\right)$, may be generalized to other primitive Dirichlet characters. One can ask whether the least integer $n_{\chi}$ for which a primitive character $\chi \pmod{q}$ yields $\chi(n) \neq 0, 1$ satisfies $n_{\chi} \ll_{\delta}q^{\delta}$ as $q \ra \infty$ for any $\delta > 0$. \\
These problems may be recast in terms of cancellation in short character sums. For the Legendre symbol modulo $p$ it is a folklore conjecture going beyond Vinogradov's that
$$
\left|\sum_{n \leq x} \left(\frac{n}{p}\right)\right| = o_{p\ra \infty}(x) \text{ whenever $x > p^{\delta}$ for any $\delta > 0$,}
$$
and more generally for any primitive Dirichlet character $\chi \pmod{q}$ we expect that
\begin{equation}\label{eq:chiVino}
\left|\sum_{n \leq x} \chi(n) \right| = o_{q\ra \infty}(x) \text{ whenever $x > q^{\delta}$ for any $\delta > 0$.}
\end{equation}
Currently, the best general result towards \eqref{eq:chiVino}, due to Burgess \cite{Bur}, allows for such cancellation (at least for $q$ cube-free) whenever $\delta > 1/4$, improved to $\delta \geq 1/4$ for prime $q$ by Hildebrand \cite{Hil}. However, it is a notoriously difficult problem to extend the zero-free regions of Dirichlet $L$-functions for \emph{individual characters}. It is therefore desirable to determine other sufficient criteria that a primitive character might satisfy that guarantees cancellation in its partial sums in a range going beyond Burgess' theorem. \\
Both of the above conjectures are well-known to hold (in a much stronger form) under the assumption of the Generalized Riemann Hypothesis (GRH), but should even hold assuming far less. Indeed, Granville and Soundararajan \cite[Cor. 1.2]{GSBurg} showed that \eqref{eq:chiVino} holds as long as $L(s,\chi)$ has sufficiently few zeros in certain small rectangles near the line $\text{Re}(s) = 1$, a condition that is easily implied for \emph{typical} characters $\chi$ by classical zero density estimates. \\
As has been elaborated upon in various works (see e.g., \cite{BoGo}, \cite{FrGo}, \cite{Man} and \cite{GraMan}), there is also a relationship between such questions about short character sums and corresponding estimates for \emph{maximal} character sums, in particular regarding improvements to the P\'{o}lya-Vinogradov inequality, which asserts that any non-principal character modulo $q$ satisfies
\begin{equation}\label{eq:Mchi}
M(\chi) := \max_{1 \leq t \leq q} \left|\sum_{n \leq t} \chi(n)\right| \ll \sqrt{q}\log q.
\end{equation}
Montgomery and Vaughan \cite{MV} showed that, assuming GRH, the factor $\log q$ in \eqref{eq:Mchi} may be improved to $\log\log q$. This is sharp (up to the implicit constant) for quadratic characters according to a construction due to Paley \cite{Pal}. A general, \emph{unconditional} improvement to the P\'{o}lya-Vinogradov inequality has, however, resisted proof for over a century.\\
Characters of various fixed orders $d \geq 2$ have been considered in this connection in a number of works, see e.g. \cite{GSPret} and \cite{LamEven}.
% it was shown that, similarly to Paley's collection of quadratic characters, collections of characters of any \emph{fixed even} order may be constructed for which the Montgomery-Vaughan estimate is sharp; in contrast, 
Building on a breakthrough by Granville-Soundararajan \cite{GSPret}, Goldmakher \cite{Gold} showed that for characters with \emph{fixed odd} order $d$ the estimate \eqref{eq:Mchi} may be improved unconditionally 
%$\log\log q$ may be sharpened (under GRH)
 to 
\begin{equation}\label{eq:PVoddImp}
M(\chi) \ll \sqrt{q}(\log q)^{1-\delta(d) + o(1)}, \text{ with } \delta(d) = 1-\frac{d}{\pi} \sin(\pi/d) > 0
\end{equation}
(see \cite{LamMan} for more precise results). On the other hand, unconditional improvements to \eqref{eq:Mchi} for \emph{fixed even} order characters seem to provide a logjam to generally improving \eqref{eq:Mchi}. Once again, it would be valuable to determine other collections of primitive characters for which one may improve the P\'{o}lya-Vinogradov inequality. \\
In this paper, we will study the relationship between cancellation in short and maximal character sums and the size of the \emph{order} of a character $\chi \pmod{q}$, i.e, the minimal positive integer $d$ such that $\chi^d$ is principal.
% especially when $d$ is a growing function of $q$.

 %and thereby also on improvements to the range in which cancellation may be proved for short sums as well. 

\subsection{Motivating Questions}
%A major obstacle in obtaining improvements in the 
In order to obtain improvements in estimates for short and maximal sums of a primitive character $\chi \pmod{q}$, we must rule out the heuristic  possibility that
%, currently difficult to rule out, 
%that a primitive character $\chi$ may satisfy 
$$
\chi(p) = 1 \text{ for all but very few } p \leq q^{\delta}.
$$ 
Since $\chi$ is multiplicative, this would further imply that
$$
\chi(n) = 1 \text{ for many integers } n \leq q^{\delta}.
$$ 
In order to preclude this possibility, therefore, 
%Thus, even barring actually obtaining improved bounds in short and maximal sums of $\chi(n)$, 
we would like to \emph{quantify the frequency with which a character takes the value $1$} at integers $n$ up to some small threshold $x$. Note that this question is more refined than simply 
trying to bound $n_{\chi}$. \\
In this paper, we shall study the value distribution of characters $\chi$ of \emph{large} order (in a sense to be made precise shortly). If $\chi$ has modulus $q$ then the order $d$ of $\chi$ divides $\phi(q)$, and most such divisors grow as a function of $q$. Thus, the collection of such characters is rather substantial. Moreover, as the elementary results in Section \ref{sec:elem} below show, such characters do exhibit some variation in their values $\chi(n)$ for small $n$, which might suggest that their short and maximal sums also exhibit cancellation. \\
The role of a character's order in its value distribution has previously been considered in the work of K. K. Norton on small upper bounds for $n_{\chi}$ (see \cite{Nor} and especially \cite{Nor2}, which contains a substantial survey on the topic), and of Granville and Soundararajan \cite{GSUppL} on upper bounds for $|L(1,\chi)|$. To our knowledge, however, questions surrounding especially the \emph{paucity} of solutions to $\chi(n) = 1$ and \emph{distribution} of values $\chi(n) \neq 1$ for small $n$ have not appeared previously in the literature, in particular when the order of the character is allowed to grow as a function of its modulus.\\
In this direction we pose the following three motivating questions:
\begin{ques} \label{ques:fibres}
Fix $\delta \in (0,1)$. If $\chi$ is a primitive character modulo $q$ with order $d = d(q)$ growing with $q$, how few solutions $n \leq q^{\delta}$ are there to $\chi(n) = 1$, or more generally to $\chi(n) = \alpha$ when $\alpha^d = 1$?
%, \emph{cannot} occur too often among $n = q^{o(1)}$? 
\end{ques}
\begin{ques}\label{ques:Burg}
If $\chi$ is a character as in the previous question, can it be shown that for any fixed $\delta \in (0,1)$,
$$
\Big|\sum_{n \leq x} \chi(n) \Big| = o_{d \ra \infty}(x) \text{ for all } x > q^{\delta}?
$$ 
%where $\delta = \delta(d) \ra 0$ as $d \ra \infty$?
\end{ques}

\begin{ques}\label{ques:PV}
For a character $\chi$ as in the previous two questions, can it be shown that $M(\chi) = o_{d \ra \infty}(\sqrt{q}\log q)$?
\end{ques}
The rationale for these questions is the following: if, say, $\chi$ is primitive modulo a prime $q$ of order $d$ then $\{\chi(n)\}_{n < q}$ equidistributes among the $d$th order roots of unity (by orthogonality of Dirichlet characters). If $d$ is large then one expects that the level sets 
$$
\{n \leq x : \chi(n) = \alpha\}, \quad \alpha^d = 1, 
$$ 
should become sparse i.e., of size $o(x)$, even when $x$ is rather small relative to $q$. Naturally, we would like to understand how quickly this can occur (i.e. how small can $x$ be for this to happen).  The variation in the values of $\chi(n)$ also suggests the possibility that the short and maximal sums of $\chi$ might exhibit cancellation as well.
%\\
%Similarly, we might expect that if $\chi(n)$ can oscillate in value for rather small values of $n$ then this is
% non-trivial bounds for the short and maximal sums of $\chi$ might be possible as a result. 
%This leads to two further questions.

\subsection{Main Results}
Our main results address each of the three questions above. In the interest of clarity we defer relevant remarks about the theorems below to Section \ref{subsec:Rems}. \\
Our first main result addresses Question \ref{ques:fibres}, provided $q$ is prime and $d$ is squarefree. See Remarks \ref{rem:NonPrimeq} and \ref{rem:nonSfd} below regarding the necessity of these assumptions. \\
(Unless indicated otherwise, all implicit constants in this paper are absolute.) 
\begin{thmMain} \label{thm:levelSet}
Let $q$ be a large prime, let $d \geq 2$ be squarefree with $d|(q-1)$ and let $\chi$ be a primitive character modulo $q$ of order $d$. Then there is an absolute constant $c_1 > 0$ such that if
$$
\delta := \max\Big\{\left(\frac{\log\log (ed)}{c_1\log d}\right)^{1/2}, \, (\log q)^{-c_1}\Big\}
$$ 
and $x > q^{\delta}$ then for any\footnote{Here and below, we write $\mu_d$ to denote the set of $d$th order roots of unity.} $\alpha \in \mu_d$,
$$
|\{n \leq x : \chi(n) = \alpha\}| \ll \frac{x}{(\log\log d)^{1/16}}.
$$
\end{thmMain}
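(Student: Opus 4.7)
The natural starting point is orthogonality on $\mu_d$. Writing
\[
|\{n \leq x : \chi(n) = \alpha\}| = \frac{1}{d}\sum_{\ell=0}^{d-1} \bar{\alpha}^{\ell} \sum_{\substack{n \leq x \\ q \nmid n}} \chi(n)^{\ell},
\]
the $\ell = 0$ term contributes a main term of size $\frac{x}{d}(1 + O(1/q))$, which is already $o\!\left(x/(\log\log d)^{1/25}\right)$ since $d$ grows. Setting $S_\ell(x) := \sum_{n \leq x} \chi^\ell(n)$, it therefore remains to show
\[
\frac{1}{d}\sum_{\ell=1}^{d-1} |S_\ell(x)| \ll \frac{x}{(\log\log d)^{1/25}}.
\]
By Cauchy--Schwarz this would follow from the second-moment bound
\[
\frac{1}{d}\sum_{\ell=1}^{d-1} |S_\ell(x)|^2 \ll \frac{x^2}{(\log\log d)^{2/25}},
\]
which is precisely the non-trivial second-moment estimate advertised in the abstract.

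To establish this, I would dyadically partition $\{1,\ldots,d-1\}$ according to $\eta_\ell := |S_\ell(x)|/x \in (2^{-K-1}, 2^{-K}]$. The Granville--Soundararajan pretentious mean-value theorem asserts that $\eta_\ell \geq \eta$ forces the existence of $|t_\ell| \leq T = (\log x)^{O(1)}$ with
\[
\mathbb{D}(\chi^\ell, n^{it_\ell}; x)^2 \leq \log(1/\eta) + O(\log\log(3/\eta)),
\]
where $\mathbb{D}$ is the usual pretentious distance. Setting $\mathcal{L}_A := \{\ell \in \mathbb{Z}/d\mathbb{Z} : \exists\, t_\ell,\, \mathbb{D}(\chi^\ell, n^{it_\ell}; x)^2 \leq A\}$, the problem reduces to proving a quantitative paucity estimate for $|\mathcal{L}_A|$ in the regime $A$ up to about $\log\log\log d$.

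The crucial structural input is the triangle inequality for $\mathbb{D}$, which yields the sumset containments
\[
\mathcal{L}_A + \mathcal{L}_A \subseteq \mathcal{L}_{4A}, \qquad k\mathcal{L}_A - k\mathcal{L}_A \subseteq \mathcal{L}_{(2k)^2 A}.
\]
Combined with the observation that $\mathcal{L}_A$ cannot exhaust $\mathbb{Z}/d\mathbb{Z}$ for moderate $A$ (otherwise $\chi$ itself would pretend to be a fixed archimedean twist $n^{it}$, contradicting its large order), one should then apply Pl\"{u}nnecke--Ruzsa and Freiman-type tools in the cyclic group $\mathbb{Z}/d\mathbb{Z}$ to obtain an explicit decay of $|\mathcal{L}_A|/d$ in $A$. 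The squarefreeness hypothesis on $d$ enters essentially here, by controlling the subgroup structure of $\mathbb{Z}/d\mathbb{Z}$ and ruling out structural obstructions to paucity.

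The principal obstacle will be converting these sumset containments into a decay $|\mathcal{L}_A|/d \ll g(A)$ strong enough to overcome the logarithmic losses in the Halasz-type input. The peculiar exponent $1/25$ is presumably the outcome of optimizing the trade-off between the exponential gain $\eta \lesssim e^{-A}$ furnished by the pretentious bound and the polynomial or polylogarithmic losses in the additive-combinatorial output. The lower threshold $\delta \gg (\log\log d / \log d)^{1/2}$ is precisely the regime in which the resulting balance remains viable, while the secondary condition $\delta \gg (\log q)^{-c_1}$ permits elementary Burgess-type control of $S_\ell(x)$ in the range $\eta_\ell < (\log x)^{-c}$ where the Halasz--Granville--Soundararajan input is no longer effective.
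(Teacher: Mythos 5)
Your high-level skeleton matches the paper: orthogonality on $\mu_d$ reduces the problem to a mean-square estimate over the family $\{\chi^\ell\}_{\ell}$, and the proof of that mean-square estimate blends pretentious distance bounds with additive combinatorics in $\mathbb{Z}/d\mathbb{Z}$. (The paper in fact avoids the intermediate Cauchy--Schwarz: it observes that $\frac{1}{d}\sum_\ell |S_\ell(x)|^2 = |\{n,m\leq x: \chi(n)=\chi(m)\neq 0\}|$, and then $|\mathcal{A}_\alpha|^2$ is directly at most this count, which is sharper than passing through $\frac1d\sum_\ell|S_\ell|$ but buys no more than a constant here.) However, several of your key steps do not survive scrutiny.

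First, you never confront the possibility that $d$ has small prime factors. Every tool in your outline (and in the paper) that controls the sumset/subgroup structure of $\mathbb{Z}/d\mathbb{Z}$ requires $P^-(d)$ to be large; the second-moment bound in the paper (Theorem~\ref{thm:meanSqShortSumsParams}) degenerates when $P^-(d)$ is bounded, and $d$ may well be even. The paper resolves this by writing $d = \mathfrak{d}\mathfrak{D}$ with $\mathfrak{d}$ the product of small prime factors, passing to $\psi := \chi^{\mathfrak{d}}$ of order $\mathfrak{D}$ (noting that level sets of $\chi$ inject into those of $\psi$), and only then applying the mean-square machinery. The squarefreeness of $d$ is what guarantees $\mathfrak{d}$ is small, hence $\mathfrak{D}$ is still large. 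You invoke squarefreeness only vaguely, as ``controlling the subgroup structure,'' which is the wrong diagnosis --- the additive-combinatorial lemma (Corollary~\ref{cor:CDGen}) only needs $P^-$ large and doesn't use squarefreeness at all.

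Second, your assertion that $\mathcal{L}_A = \mathbb{Z}/d\mathbb{Z}$ ``would make $\chi$ pretend to be a fixed archimedean twist $n^{it}$, contradicting its large order'' is not a valid step: a large-order character can perfectly well satisfy $\chi(p)=1$ for all primes $p$ below some threshold $x$, and the order of $\chi$ on its own gives no contradiction. Ruling this out is the content of Theorem~\ref{thm:HilApp}, a sieve-theoretic argument (Hildebrand's mean-value theorem applied to the indicator of $x$-friable integers with $f(p)=1$, contradicting the weak equidistribution of $\chi(n)$ for $n\leq q^{3/2+\epsilon}$), supplemented for very large $d$ by a zero-density estimate. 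This input, establishing that $\sum_{p\leq x, \chi(p)\neq 0,1}1/p$ is genuinely large, is absent from your sketch. Relatedly, your interpretation of the secondary condition $\delta \gg (\log q)^{-c_1}$ as permitting ``elementary Burgess-type control'' is incorrect: it arises precisely from the zero-density regime of Theorem~\ref{thm:HilApp}(b), not from any Burgess input.

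Third, the sumset containments $\mathcal{L}_A + \mathcal{L}_A \subseteq \mathcal{L}_{4A}$ and $k\mathcal{L}_A - k\mathcal{L}_A \subseteq \mathcal{L}_{(2k)^2A}$ are not how the paper converts combinatorial structure into a paucity bound, and it is unclear they would suffice. The paper instead splits on whether some single level set $\sigma_m := \sum_{p\leq x,\chi(p)=e(m/d)}1/p$ is disproportionately large. If not (Lemma~\ref{lem:SmallSigCase}), a Fourier-analytic variance computation shows $\mathbb{D}(\chi^\ell,1;x)^2$ is large for almost all $\ell$ with no combinatorics needed. If so (Proposition~\ref{prop:TKAppCes}), the argument uses Tur\'{a}n--Kubilius plus a Lipschitz estimate. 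In both branches the key combinatorial input is Proposition~\ref{prop:approxHom}: the set of $\ell$ with large $|S_\ell(x)|$, if dense, must $O(1)$-generate $\mathbb{Z}/d\mathbb{Z}$ (Freiman iterated doubling), forcing the minimizing twists $t_\ell$ to form an approximate homomorphism, which by Ruzsa--Hyers stability must be close to the zero homomorphism since $\mathbb{Z}/d\mathbb{Z}$ has no nontrivial real-valued homomorphisms. This is a rather different mechanism from Pl\"{u}nnecke--Ruzsa growth of $\mathcal{L}_A$, and without carrying out your version one cannot be confident it achieves the required $(\log\log d)^{-2/25}$ rather than, say, $(\log\log\log d)^{-c}$.
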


%\begin{cor} \label{cor:levelSetParams}
%Let $\eta, \delta > 0$, let $q$ be a large prime and let $d \geq 2$ be squarefree. Then there is an absolute constant $c > 0$ and a $C = C(\eta,\delta) > 0$ such that if $\eta \delta > (\log q)^{-c}$  then the following holds: \\
%If $\chi$ is a primitive character modulo $q$ with order $d \geq C$ then for any $x > q^{\delta}$ and\footnote{Here and below, we write $\mu_d$ to denote the set of $d$th order roots of unity.} $\alpha \in \mu_d$,
%$$
%|\{n \leq x : \chi(n) = \alpha\}| \ll \frac{x}{(\log(1/\eta))^{1/35}}.
%$$
%In fact, one may take $C(\eta,\delta) := C_1\rho(\tfrac{C_2}{\eta\delta})^{-1}$, for absolute constants $C_1,C_2 > 0$, where $\rho(t)$ is the Dickman-de Bruijn function.
%\end{cor}
%\noindent Taking $\delta = \eta$ in Corollary \ref{cor:levelSet}, for example, allows us to show that if $\chi$ is a primitive character of large \emph{squarefree} order $d$ then $\chi(n) = 1$ only occurs for $O(x/(\log\log d)^{1/35})$ elements $n \leq x$ as soon as $x > q^{\delta}$ with $\delta \gg \left(\frac{\log\log d}{\log d}\right)^{1/2}$.
%\begin{rem}
%The lower bound condition on $P^-(d)$ is in place to ensure that the group of $d$th roots of unity has no small subgroups, which might allow for a lot of repetition of values of $\chi(n)$.
%\end{rem}
Our second main theorem is a mean-square estimate for short sums of order $d$ characters, showing that for \emph{typical} $ 1\leq \ell \leq d-1$ the partial sums of $\chi^\ell$ over intervals $[1,q^\delta]$ exhibit cancellation for any fixed $\delta >0$, as long as $d$ has no small prime factors. 
%Together with Theorem \ref{thm:HilApp}, when $f = \chi$ 
This shows that Question \ref{ques:Burg} has a positive answer for \emph{almost all} powers $\chi^\ell$ of $\chi$. See Remark \ref{rem:Harper} for a discussion of the strength of these bounds.
\begin{thmMain}\label{thm:meanSqShortSums}
Let $q$ be a large prime, let $d \geq 2$ with $d|(q-1)$ and let $\chi$ be a primitive character modulo $q$ of order $d$.
%satisfy $P^-(d) \geq \log\log d$. 
Define\footnote{Here and elsewhere, for $d \in \mb{N}$ we write $P^-(d)$ to denote the smallest prime factor of $d$, and $P^+(d)$ to denote the largest prime factor of $d$, with the conventions $P^-(1) = P^+(1) := 1$.}
$$
G = G(d) := \min\{P^-(d), \log\log (ed)\}.
$$
%Then for any $x > q^
Then there is an absolute constant $c_2 > 0$ such that if
$$
\delta := \max\Big\{\left(\frac{\log\log (ed)}{c_2\log d}\right)^{1/2}, \, (\log q)^{-c_2}\Big\}
$$ 
and $x > q^{\delta}$ then
$$
\frac{1}{d}\sum_{1 \leq \ell \leq d-1} \left|\sum_{n \leq x} \chi^\ell(n)\right|^2 \ll x^2 \frac{(\log G)^2}{G^{2/15}}.
$$
\end{thmMain}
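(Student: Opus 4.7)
The plan is to combine a Hal\'asz-type pretentious upper bound for individual short character sums with a structural analysis of the set of indices $\ell$ for which $\chi^\ell$ admits a low-complexity pretentious approximation. The hypothesis that $d$ has no small prime factor is essential: it is used precisely to keep the exceptional set small.

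First, I would apply a Burgess-range Hal\'asz-type bound to obtain, for $x > q^\delta$ and each non-principal $\chi^\ell$,
\[
\frac{|S_\ell(x)|^2}{x^2} \ll F(M_\ell),
\]
where $S_\ell(x) := \sum_{n \leq x}\chi^\ell(n)$, $M_\ell := \inf \mb{D}(\chi^\ell \bar\psi, n^{it}; x)^2$ taken over primitive $\psi$ of bounded conductor and bounded $|t|$, and $F$ is a suitable decay function (essentially $(1+M)^{O(1)} e^{-M/2}$ or $(\log M)^2/M$). Fix a threshold $M^{\ast} \asymp G^{1/11}/(\log G)^2$ and partition $\{1,\dots,d-1\}$ into a ``good'' set where $M_\ell > M^{\ast}$ (so $|S_\ell|^2 \ll x^2 (\log G)^2/G^{1/11}$) and a ``bad'' set $\mc{B}$, on which I would use the trivial bound $|S_\ell| \leq x$. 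It then suffices to show $|\mc{B}|/d \ll (\log G)^2/G^{1/11}$.

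Each $\ell \in \mc{B}$ comes with an approximating pair $(\psi_\ell, t_\ell)$; partitioning $\mc{B}$ according to $\psi = \psi_\ell$ (of which there are only polynomially many in the conductor bound) reduces matters to bounding each piece $\mc{B}_\psi$. The pretentious triangle inequality, together with multiplicativity of $\mb{D}$, gives $\mc{B}_\psi - \mc{B}_\psi \subseteq \mc{B}_0 := \{\ell : \mb{D}(\chi^\ell, n^{it}; x)^2 \ll M^\ast \text{ for some small } t\}$. The structural input now is that the order of $\chi^\ell$ equals $d/\gcd(\ell,d)$, which for $\gcd(\ell,d) < d$ is at least $P^-(d) \geq G$; raising the approximation $\chi^\ell \approx n^{it}$ to this order forces $n^{it\,d/\gcd(\ell,d)} \approx 1$ on average over primes $p \leq x$, which constrains both $t$ and $\ell \bmod d$ quantitatively. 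Combined with a Pl\"unnecke--Ruzsa-type sumset estimate in $\Z/d\Z$ applied to $\mc{B}_\psi$ and its difference set, this controls $|\mc{B}_0|$ and hence $|\mc{B}|$.

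The main obstacle will be the last structural step: converting the pretentious approximation $\chi^\ell \approx n^{it}$ into a quantitative arithmetic constraint on $\ell$ modulo $d$, and balancing parameters so as to extract the explicit polynomial savings $G^{-1/11}$ rather than merely logarithmic savings. This appears to hinge delicately on the hypothesis $P^-(d) \geq G$ together with careful use of multiplicativity of $\chi$; the specific exponent $1/11$ should emerge from optimizing between the pretentious threshold $M^\ast$, the conductor range $M_0$, and the length of the interval in which $\mb{D}$ is computed.
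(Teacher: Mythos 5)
The proposal is conceptually in the right neighbourhood, but it diverges from the paper's argument at several crucial points, and the divergences are not merely cosmetic: they introduce gaps that the paper was specifically designed to bridge.

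First, for short sums over $n \leq x$ with $x > q^\delta$ the relevant pretentious comparison is only against the Archimedean twists $n^{it}$ (the Hal\'asz--Montgomery--Tenenbaum bound \eqref{eq:HMT}); primitive characters $\psi$ of bounded conductor enter when estimating the \emph{maximal} sum $M(\chi)$ via the Granville--Soundararajan theory, which is what Section \ref{sec:PVImp} does for Theorem \ref{thm:PVImpAvg}. Your proposed decomposition mixes these two regimes and thereby imports a structure ($\mc{B}_\psi$ indexed by characters $\psi$) that does not arise in the short-sum problem.

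Second, and more seriously, the step ``raising the approximation $\chi^\ell \approx n^{it}$ to the order $d/\gcd(\ell,d)$ forces $n^{it\,d/\gcd(\ell,d)} \approx 1$'' cannot work as stated. The pretentious triangle inequality only gives $\mb{D}(\chi^{\ell k}, n^{itk}; x) \leq k\,\mb{D}(\chi^\ell,n^{it};x)$, so with $k = d/\gcd(\ell,d)$ — which may be as large as $d$, and $d$ is allowed to be as large as $q-1$ here — the resulting bound on $\mb{D}(1, n^{itd/\gcd(\ell,d)};x)$ is vacuous. This is exactly the obstruction the paper is engineered to bypass. The paper's Proposition \ref{prop:approxHom} instead uses the fact that $P^-(d)$ large forces any large symmetric subset $A$ of $\mb{Z}/d\mb{Z}$ to satisfy $2^kA = \mb{Z}/d\mb{Z}$ after a \emph{bounded} number of doublings (Lemma \ref{lem:FreiApp} and Corollary \ref{cor:CDGen}, via Freiman's theorem). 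Each $\ell$ can then be written as a sum of $O_c(1)$ elements of the large-sum index set $\mc{C}_d(\e)$, so the triangle inequality is only iterated a bounded number of times, independent of $d$. The conclusion that all $t_\ell$ are negligibly small then comes from Ruzsa's quantitative version of Hyers' theorem on approximate homomorphisms. Your Pl\"unnecke--Ruzsa sumset idea is in the right spirit but targets the wrong quantity (sizes of iterated sumsets rather than the additive-structural inverse statement that $A$ fills out $\mb{Z}/d\mb{Z}$ quickly), and without the ``bounded number of steps'' conclusion there is no escape from the factor $d/\gcd(\ell,d)$.

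Third, your proposal contains no mechanism for establishing that $\chi$ itself is not $1$-pretentious on the short range $[1,x]$. In the paper this is the role of Theorem \ref{thm:HilApp} (proved via Hildebrand's mean-value theorem and, for very large $d$, a zero-density argument in Proposition \ref{prop:HilAppChar}), which guarantees $\sum_{p \leq x,\, \chi(p)\neq 0,1} p^{-1} \gg \log\log d$ and hence makes $\Sigma \gg G(d)$ in Theorem \ref{thm:meanSqShortSumsParams}. Without this input the Hal\'asz bound gives nothing, since $\mb{D}(\chi^\ell,1;x)$ could a priori be small for every $\ell$. Relatedly, the paper's proof of Theorem \ref{thm:meanSqShortSumsParams} must separately handle the case where one residue class $j$ captures a disproportionate share of $\sum_j \sigma_j$ (Proposition \ref{prop:TKAppCes} via Tur\'an--Kubilius) and the case where no class dominates (Lemma \ref{lem:SmallSigCase} via a variance calculation); your proposal does not distinguish these, and the exponent $1/11$ comes from balancing these two branches rather than from the parameter $M^\ast$ alone. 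As written, then, the proposal is a plausible sketch of the easy case of bounded $d$, but it does not scale to the full generality of the theorem.
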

Our third main theorem
%\footnote{Though we could have deduced a similar result as a corollary to Theorem \ref{thm:meanSqShortSums}, we present in Section \ref{sec:PVImp} a separate argument that gives the stronger result given here.} 
gives an upper bound for the average size of $M(\chi^\ell)$ with $1 \leq \ell \leq d-1$ that improves on the P\'{o}lya-Vinogradov inequality for \emph{any} $d |(q-1)$ having no small prime factors. 
%As a corollary to Theorem \ref{thm:meanSqShortSums}, one could show in addition that the P\'{o}lya-Vinogradov inequality can be improved on average over the powers of a character $\chi$ with large prime order $d$. However, by a separate argument, we obtain a stronger such result than what the above theorems allow. 
This addresses Question \ref{ques:PV}, again for \emph{almost all} powers $\chi^\ell$ of $\chi$. 
%See also Remark \ref{rem:Oddness} below.
\begin{thmMain} \label{thm:PVImpAvg}
%There are absolute constants $C_1,C_2 > 0$ such that the following holds. 
Let $q$ be a large prime, let $d\geq 2$ with $d|(q-1)$ and let $\chi$ be a primitive character modulo $q$ of order $d$. Then
$$
\frac{1}{d}\sum_{1 \leq \ell \leq d-1} M(\chi^\ell) \ll (\sqrt{q}\log q)\left(\sqrt{\frac{\log\log\log q}{\log \log q}} + \frac{1}{P^-(d)}\right).
$$
\end{thmMain}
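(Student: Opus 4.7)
\textbf{Proof plan for Theorem \ref{thm:PVImpAvg}.} The starting point is P\'olya's Fourier expansion: for $q$ prime and each primitive $\chi^\ell$ with $1 \leq \ell \leq d-1$, truncation at $N = q(\log q)^2$ gives
$$
\sum_{n \leq t}\chi^\ell(n) = \frac{\tau(\chi^\ell)}{2\pi i}\sum_{0 < |n| \leq N}\frac{\bar\chi^\ell(n)}{n}\bigl(1 - e(-nt/q)\bigr) + O\bigl((\log q)^{-1}\bigr),
$$
so $M(\chi^\ell) \ll \sqrt{q}\,\Phi_\ell$ with $\Phi_\ell := \max_{1\leq t\leq q}\bigl|\sum_{0 < |n| \leq N}\bar\chi^\ell(n)(1-e(-nt/q))/n\bigr|$. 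Cauchy--Schwarz reduces the theorem to the mean-square bound
$$
\frac{1}{d}\sum_{\ell=1}^{d-1}\Phi_\ell^2 \;\ll\; (\log q)^2\!\left(\frac{\log\log\log q}{\log\log q} + \frac{1}{P^-(d)^2}\right),
$$
after which the subadditivity $\sqrt{a+b}\leq \sqrt{a}+\sqrt{b}$ produces the two summands in the claimed inequality.

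To exploit character orthogonality in $\ell$ one must commute the maximum over $t$ with the sum over $\ell$. I would do so via a Sobolev-type inequality $\max_t|F(t)|^2 \ll q^{-1}\!\int|F|^2 + (q^{-1}\!\int|F|^2)^{1/2}(q^{-1}\!\int|F'|^2)^{1/2}$, or alternatively by a Rademacher--Menshov-type dyadic splitting of $[1,q]$; both incur only logarithmic losses that can be absorbed. Plancherel in $t$ then converts the resulting double mean over $(\ell,t)$ into a bilinear form in $(m,n)$ weighted by
$$
\frac{1}{d}\sum_{\ell=1}^{d-1}\bar\chi^\ell(m)\chi^\ell(n) = \mathbf{1}[nm^{-1} \in H_d] - \frac{1}{d},
$$
where $H_d := \ker\chi \leq (\Z/q\Z)^\ast$ is the subgroup of $d$-th powers. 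The $-\tfrac{1}{d}$ piece contributes $O((\log q)^2/d)$, which is negligible. The indicator piece splits into a diagonal ($m \equiv n \bmod q$), recovering the classical P\'olya--Vinogradov $(\log q)^2$, and an off-diagonal with $nm^{-1} \in H_d \setminus \{1\}$.

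The two summands in the target bound correspond to two mechanisms for controlling the off-diagonal. The $1/P^-(d)^2$ term arises from the multiplicative rigidity of $H_d$ when $d$ has no small prime factors: primes $p < P^-(d)$ cannot lie in $H_d \setminus \{1\}$ unless forced by a congruence modulo $q$, and a counting argument over $P^-(d)$-smooth integers $m,n \leq \sqrt{q}$ yields the claimed savings. The $\log\log\log q / \log\log q$ gain is a Hal\'asz-type input: for all but $o(d)$ values of $\ell$, the character $\chi^\ell$ is bounded away from every short character in pretentious distance, which forces $|L(1,\chi^\ell)|^2$ to be of average size $(\log q)^2 \cdot (\log\log\log q/\log\log q)$, and this feeds into the $L^2$-norm arising after Plancherel. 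The principal technical obstacle is to commute $\max_t$ with $\frac{1}{d}\sum_\ell$ while preserving \emph{both} gains simultaneously, since the Sobolev or Rademacher--Menshov losses can easily swamp the $1/P^-(d)^2$ savings unless the dyadic decomposition is tailored carefully; the off-diagonal analysis itself is the other technical heart, and exploits the same pretentious ingredients that drive Theorems \ref{thm:levelSet}--\ref{thm:meanSqShortSums}.
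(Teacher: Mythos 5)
Your proposal is a genuinely different route from the paper's, but it has a gap that I believe is fatal rather than merely technical, located exactly where you flag it: the commutation of $\max_t$ with $\tfrac{1}{d}\sum_{\ell}$. Neither of the two tools you propose can close the $L^\infty$-to-$L^2$ gap for the truncated P\'olya polynomials. Writing $F_\ell(t) = \sum_{0<|n|\le N} \bar\chi^\ell(n)(1-e(-nt/q))/n$ with $N = q(\log q)^2$, Plancherel over $t \in [0,q]$ gives $\tfrac{1}{q}\int_0^q |F_\ell|^2\,dt \asymp |c_\ell|^2 + O(1)$ where $c_\ell := \sum_{0<|n|\le N}\bar\chi^\ell(n)/n$ is the constant Fourier mode, while the derivative has $\tfrac{1}{q}\int_0^q|F_\ell'|^2\,dt \asymp (\log q)^2/q$, so $\|F_\ell'\|_{L^2([0,q])} \asymp \log q$. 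The Sobolev bound $\|F\|_\infty^2 \ll \tfrac{1}{q}\|F\|_2^2 + \|F\|_2\|F'\|_2$ therefore produces a term of size $\sqrt{q}\cdot(|c_\ell|+1)\log q$, which is not merely lossy but fails even to recover P\'olya--Vinogradov. Rademacher--Menshov does not apply directly (the maximum is over a continuous parameter $t$ appearing inside the exponentials, not over a truncation parameter), and a dyadic surrogate would lose at least a $\log$ factor, which is the entirety of the main term you are trying to save a sub-logarithmic amount from. More fundamentally: once you replace $\Phi_\ell^2$ with its $L^2_t$ average and apply orthogonality in $\ell$, the diagonal $m=n$ alone contributes $\sum_{|n|\le N}|1-e(-nt/q)|^2/n^2 = O(1)$ on average, and there is no $(\log q)^2$ main term left to take a relative saving from. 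You would need to show that $\max_t \tfrac1d\sum_\ell|F_\ell(t)|^2 \ll (\log q)^2\cdot(\tfrac{\log\log\log q}{\log\log q}+\tfrac1{P^-(d)^2})$, but the worst-$t$ and worst-$\ell$ do not decouple, and the proposed tools don't provide the coupling.

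The paper avoids the $L^2$-averaging issue entirely by working at the level of the maxima $M(\chi^\ell)$ directly. The key ingredients are (i) a uniform-in-$g$ version of Granville--Soundararajan's odd-order-character P\'olya--Vinogradov improvement (Lemma~\ref{lem:GSLargeG}), which shows that if characters $\chi^{a_1},\dots,\chi^{a_{2k}},\bar\chi^a$ all have large $M$ and their product is trivial, then the pretentious distances contradict a Siegel--Walfisz lower bound for $\mb{D}(1,\xi;q)^2$; this makes the exceptional set $\Xi_d(\e)=\{\ell : M(\chi^\ell) > \e\sqrt q\log q\}$ a $(2k,1)$-set in $\mb{Z}/d\mb{Z}$ (Lemma~\ref{lem:sumFree}); and (ii) the Bajnok and Hamidoune--Plagne extremal bound $|\Xi_d(\e)| \ll d(1/k + 1/P^-(d))$, where the $1/P^-(d)$ arises directly from the lattice of subgroups of $\mb{Z}/d\mb{Z}$ (Proposition~\ref{prop:smallXiD}). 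The theorem then follows by splitting the average over $\ell$ according to membership in $\Xi_d(\e)$ and optimizing $k \asymp \sqrt{\log\log q/\log\log\log q}$. So both of the paper's two terms come from additive-combinatorial structure of the bad set, not from an $L^2$ argument in $t$; the pretentious/Hal\'asz input enters only through the Granville--Soundararajan lemma, which bounds \emph{products} of maxima over finitely many characters with trivial product, rather than through any commutation of $\max_t$ with an average.
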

Combined with a (slight extension of a) result of Granville and Soundararajan on characters of \emph{fixed odd} order \cite[Thm. 2]{GSPret}, this gives the following bound, which provides a uniform saving for all \emph{prime} $d$, even when $d = d(q) \ra \infty$. See Remark \ref{rem:Oddness} for a discussion of the novelty of this result.
\begin{corMain}\label{cor:unifPVImp}
Let $q$ be a large prime. Then, uniformly over all primitive characters $\chi$ modulo $q$ of prime order $d \geq 3$ with $d|(q-1)$,
$$
\frac{1}{d}\sum_{1 \leq \ell \leq d-1} M(\chi^\ell) \ll (\sqrt{q}\log q) \left(\frac{\log\log\log q}{\log \log q}\right)^{1/2}.
$$
\end{corMain}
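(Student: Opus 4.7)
The plan is to prove Corollary \ref{cor:unifPVImp} by a dichotomy on the size of the prime $d$. Fix a threshold $T = T(q)$ of the shape $T \asymp \sqrt{\log\log q / \log\log\log q}$, to be calibrated at the end.

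When $d \geq T$: since $d$ is prime, $P^{-}(d) = d \geq T$, so
$$
\frac{1}{P^{-}(d)} \leq \frac{1}{T} \ll \sqrt{\frac{\log\log\log q}{\log\log q}}.
$$
Theorem \ref{thm:PVImpAvg} then immediately yields
$$
\frac{1}{d}\sum_{1 \leq \ell \leq d-1} M(\chi^\ell) \ll \sqrt{q}\log q \cdot \sqrt{\frac{\log\log\log q}{\log\log q}},
$$
which is exactly the bound in the corollary.

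When $3 \leq d < T$: since $d$ is prime, each $\chi^\ell$ with $1 \leq \ell \leq d-1$ is itself primitive of order exactly $d$, so it suffices to produce a uniform pointwise bound of the form
$$
M(\psi) \ll \sqrt{q}\log q \cdot \sqrt{\frac{\log\log\log q}{\log\log q}}
$$
valid for every primitive character $\psi$ modulo $q$ of prime order $d < T$. This is where the slight extension of Granville--Soundararajan \cite[Thm.~2]{GSPret} enters. Their original theorem gives an unconditional Polya--Vinogradov improvement of the form $M(\psi) \ll_d \sqrt{q}\log q \cdot \eta(d,q)$ for \emph{fixed} odd prime order $d$, where $\eta(d,q)$ decays (e.g.\ like a negative power of $\log q$ or $\log\log q$ depending on their formulation) in terms of a parameter $\delta(d)>0$ satisfying $\delta(d) \gtrsim 1/d^2$ for large $d$. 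The required slight extension makes the implicit constant absolute and tracks the $d$-dependence explicitly, so that the bound holds uniformly over primitive $\psi$ of prime order $d < T$. A short direct computation, using $\delta(d) \gtrsim 1/d^2$ and the definition of $T$, then confirms that $\eta(d,q) \ll \sqrt{\log\log\log q / \log\log q}$ throughout the range $3 \leq d < T$, so that averaging trivially over $\ell$ closes this case.

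The main obstacle is the extension of \cite[Thm.~2]{GSPret} itself: one has to revisit the pretentious-distance and Halasz-type mean-value machinery underlying that theorem and track the dependence on $d$ carefully, since the original result was phrased with $d$ fixed and implicit constants permitted to depend on $d$. Structurally nothing is expected to break, but genuine care is needed to verify that error terms, and any $o(1)$ losses in exponents, remain uniformly small as $d$ grows slowly with $q$ (throughout $d < T$), and that the resulting $\eta(d,q)$ is still a decreasing function of $d$ strong enough to dominate $\sqrt{\log\log\log q/\log\log q}$. Combining the two cases produces the uniform bound stated in the corollary.
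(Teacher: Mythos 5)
Your proposal follows essentially the same route as the paper. The paper does not phrase the argument as a case split but instead applies its Lemma \ref{lem:GSLargeG} (the "slight extension" of Granville--Soundararajan, with $g=d$ and $\chi_j=\chi^\ell$ for all $j$) to obtain the pointwise bound $M(\chi^\ell) \leq C_0(\sqrt{q}\log q)(\log q)^{-(1+o(1))/(2d^2)}$ for each $1\le\ell\le d-1$, then combines this with Theorem \ref{thm:PVImpAvg} as a $\min$ of the two bounds; the transition occurs at $d\asymp\sqrt{\log\log q/\log\log\log q}$, which is exactly your threshold $T$. Your reading of the GS extension --- absolute constants, explicit $d$-dependence, exponent on the order of $1/d^2$, and the use of primality of $d$ to keep each $\chi^\ell$ primitive of order $d$ --- matches what Lemma \ref{lem:GSLargeG} actually delivers.
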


\subsection{More precise results}
%With the exception of Theorem \ref{thm:PVImpAvg} on maximal character sums, 
Theorems \ref{thm:levelSet} and \ref{thm:meanSqShortSums} may be extended to a 
%in a slightly broader context,  extending beyond Dirichlet characters to a 
larger collection of completely multiplicative functions whose non-zero values are roots of unity of a large order. Considerations of such a general (but slightly different) nature arose also in \cite{GSUppL}.
%where upper bounds for $L(1,\chi^\ell)$ were considered, pointwise and on average, for $\chi$ of large but slowly-growing order. 
%see e.g. the discussion surrounding \cite[(1.2)]{GSUppL}.
% for the definition of the collection of functions considered there. 
%For concreteness, we introduce the following definition. 
\begin{def1}
Let $d \in \mb{N}$ and $x_0 \geq 1$. We say that a completely multiplicative function $f: \mb{N} \ra \mu_d\cup\{0\}$ \emph{weakly equidistributes beyond (a scale) $x_0$} if\footnote{The constant $100$ here could be replaced by any fixed constant, and is merely chosen for concreteness.}
%there is a constant $C \geq 1$ such that 
for any $x \geq x_0$,
$$
\max_{\alpha \in \mu_d} |\{n \leq x : f(n) = \alpha\}| \leq \frac{100x}{d} \prod_{\ss{p \leq x \\ f(p) = 0}} \left(1-\frac{1}{p}\right).
$$
We define $\mc{M}(x_0;d)$ to be the collection of all such completely multiplicative functions. For $f \in \mc{M}(x_0;d)$ and $x \geq x_0$ we write
$$
c_f(x) := \prod_{\ss{p \leq x \\ f(p) = 0}}\left(1-\frac{1}{p}\right),
$$
and we call $x_0$ the \emph{threshold} of $f$. We define $\mc{M}_1(x_0;d)$ to be the subcollection of those $f \in \mc{M}(x_0;d)$ for which
\begin{equation}\label{eq:bddZeroSet}
\sum_{\ss{p: f(p) = 0}} \frac{1}{p} \leq 100,
\end{equation}
so that $c_f(x) \gg 1$ uniformly in $x$ whenever $f \in \mc{M}_1(x_0;d)$.
\end{def1}
%Primitive characters modulo primes furnish motivating examples for these definitions. 
\noindent For example, let $\chi$ be a primitive character modulo $q$ prime and order $d|(q-1)$. By \eqref{eq:Mchi}, we have
%by the Burgess-Hildebrand theorem, any primitive Dirichlet character $\chi$ modulo a prime $q$ of order $d$ satisfies
$$
\sum_{n \leq x} \chi^\ell(n) \ll \sqrt{q}\log q = o(x/d)
$$
with $x > q^{3/2+\e}$ \emph{uniformly} over $1 \leq \ell \leq d-1$ and $d|(q-1)$. Thus, by the Weyl criterion,
% we have
$$
\max_{\alpha \in \mu_d} |\{n \leq x : \chi(n) = \alpha\}| = (1+o(1)) \frac{x}{d}, \quad x > q^{3/2+\e}.
$$
%This shows that if $q$ is a large prime and $d|(q-1)$ then 
Thus, $\chi \in \mc{M}_1(q^\theta;d)$ for any $\theta > 3/2$. \\ 
Our first general theorem shows that
 for any fixed $\delta > 0$ and any\footnote{In contrast to our main theorems, this result makes no assumptions on the arithmetic nature of $d$.} 
$d$ large enough relative to $\delta$, if $f \in \mc{M}_1(x_0;d)$ 
%and $d$ is large enough in terms of $\delta$ 
then $f(p) \neq 0,1$ for many primes $p \leq x_0^{\delta}$.
% as soon as $x > x_0^{\delta}$. 
%and $d$ is large enough as a function of $\delta$. 
%That is, a weakly equidistributed function $f$ cannot be \emph{too} $1$-pretentious beyond small powers of its threshold (see Section \ref{sec:pret} for the appropriate notion of pretentiousness). 
\begin{thm} \label{thm:HilApp}
Let $x_0 \geq 3$ be large and let $\eta,\delta \in (0,1)$. Then there are absolute constants $c_3 \in (0,1)$ and
%\footnote{These are not necessarily the same $C_1,C_2$ as in Corollary \ref{cor:levelSet}.} 
$C_1,C_2,C_3 > 0$ such that the following is true.
\begin{enumerate}[(a)]
\item If $f \in \mc{M}_1(x_0;d)$ with\footnote{Here and elsewhere, $\rho: [0,\infty) \ra \mb{R}$ denotes the Dickman-de Bruijn function, defined uniquely by the initial condition $\rho(u) = 1$ for $u \in [0,1]$, and
$$
u\rho'(u) + \rho(u-1) = 0, \quad u > 1.
$$
%with initial condition $\rho(u) = 1$ for all $u \in [0,1]$. 
See \cite[Sec. III.5.3-III.5.4]{Ten} for an account of some of its useful properties.}

\begin{equation}\label{eq:consD}
C_1 \rho(\tfrac{C_2}{\eta \delta})^{-1} \leq d \leq \tfrac{C_3}{\eta \delta} e^{(\log x_0)^{c_3}} 
\end{equation}
then for any $x > x_0^{\delta}$,
$$
\sum_{\ss{p \leq x \\ f(p) \neq 0,1}} \frac{1}{p} \geq \log(1/\eta).
$$
\item If $f$ is a primitive Dirichlet character modulo a large prime $q$ and $d|(q-1)$, and $\eta \delta \geq (\log q)^{-c}$ for some $0 < c  < c_3$, then the upper bound constraint in \eqref{eq:consD} may be removed.
\end{enumerate}
\end{thm}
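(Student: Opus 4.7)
My plan is to argue by contradiction. Suppose $\sum_{p \leq x,\, f(p) \neq 0, 1} 1/p < \log(1/\eta)$, and write $T = \{p : f(p) = 0\}$, $B = \{p : f(p) \neq 0, 1\}$, and $S = \{p : f(p) = 1\}$. The strategy is to use smooth-number constructions to produce many $n \leq x_0$ with $f(n) = 1$, thereby violating the weak equidistribution of $f$ at the threshold scale $x_0$.

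Fix a small absolute constant $K > 0$ and set $y = x_0^{\eta \delta/K}$, so that $u := \log x_0 / \log y = K/(\eta\delta)$ and $y \leq x$. By Hildebrand's theorem on smooth numbers, $\Psi(x_0, y) := \#\{n \leq x_0 : P^+(n) \leq y\} \sim x_0 \rho(K/(\eta\delta))$; the asymptotic is valid provided $u \ll (\log x_0)^{c_3}$, which in Part (a) follows from combining the lower bound $d \geq C_1 \rho(C_2/(\eta\delta))^{-1}$ with the upper bound $d \leq (C_3/(\eta\delta)) e^{(\log x_0)^{c_3}}$, and in Part (b) follows directly from $\eta\delta \geq (\log q)^{-c}$ with $c < c_3$. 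Any $y$-smooth $n \leq x_0$ whose prime factors all lie in $S \cap [1, y]$ automatically satisfies $f(n) = 1$. A sieve argument for smooth numbers with restricted prime support should yield
\[
\#\{n \leq x_0 : P^+(n) \leq y,\; p \mid n \Rightarrow p \in S\} \gg \Psi(x_0, y) \prod_{p \in (T \cup B) \cap [1, y]}(1 - 1/p).
\]
Combining the bound $\sum_{p \in T} 1/p \leq 100$ with the hypothetical $\sum_{p \in B,\, p \leq x} 1/p < \log(1/\eta)$ forces the product to be $\gg \eta$, whence $|\{n \leq x_0 : f(n) = 1\}| \gg x_0 \rho(K/(\eta\delta))\, \eta$.

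On the other hand, the weak equidistribution hypothesis $f \in \mc{M}_1(x_0; d)$, combined with $c_f(x_0) \ll 1$, gives $|\{n \leq x_0 : f(n) = 1\}| \ll x_0/d$. Together the two estimates yield $d \ll 1/(\eta\, \rho(K/(\eta\delta)))$. Choosing $C_2 > K$ and exploiting the super-polynomial decay $\log \rho(u) \sim -u \log u$ at large $u$, one finds $\rho(K/(\eta\delta))/\rho(C_2/(\eta\delta)) \gg 1/\eta$, contradicting $d \geq C_1 \rho(C_2/(\eta\delta))^{-1}$ once $C_1$ is chosen sufficiently large. The principal obstacle will be the restricted-sieve lower bound in the previous paragraph: since the forbidden set $R := (T \cup B) \cap [1, y]$ can have $\sum_{p \in R} 1/p$ as large as $\log(1/\eta) + 100$ and may contain primes close to $y$, a naive Eratosthenes-style sieve suffers from the factor $\rho(u - 1)/\rho(u) \asymp u \log u$ at such primes, so the proof should either truncate the sieve at $p \leq y^{1 - \epsilon}$ (accepting only a constant loss in the main term) or appeal to the generalised Dickman-function estimates of Saias--Tenenbaum for smooth numbers supported in a restricted set of primes.
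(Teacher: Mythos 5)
Your overall strategy matches the paper's: argue by contradiction, produce many $n \le x_0$ with $f(n)=1$ by restricting to smooth numbers whose prime divisors lie in $S$, and compare with the upper bound $\ll x_0/d$ from weak equidistribution. However, the central step --- the lower bound
$$
\#\{n \leq x_0 : P^+(n) \leq y,\ p \mid n \Rightarrow p \in S\} \gg \Psi(x_0, y) \prod_{p \in R\cap[1,y]}(1 - 1/p)
$$
--- is not merely difficult but \emph{false} as stated. Take for instance $R\cap[1,y]$ concentrated in $(y^{1-\epsilon},y]$ with $\sum_{p \in R}1/p \approx \epsilon$. Then the left-hand side is essentially $\Psi(x_0,y^{1-\epsilon}) \asymp x_0\rho(u/(1-\epsilon))$, while the right-hand side is $\asymp x_0\rho(u)e^{-\epsilon}$; for $u$ even moderately large the ratio $\rho(u/(1-\epsilon))/(\rho(u)e^{-\epsilon}) \approx \exp(-u\epsilon\log u)$ collapses, so no such $\gg$ holds. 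You correctly flagged this as the ``principal obstacle,'' but the two fixes you propose do not close the gap: truncating the sieve at $p \le y^{1-\epsilon}$ discards exactly the constraints that cause the failure, and the Saias--Tenenbaum-style estimates would in fact show the bound is false in this form. The paper circumvents all of this by invoking Hildebrand's quantitative mean-value theorem for nonnegative multiplicative functions (cited as \cite[Thm.~2]{HilMV}) applied to $g(n) := 1_{P^+(n)\le x_0^\delta}\,1_{p\mid n \Rightarrow f(p)=1}$. That result packages the smooth restriction and the prime-support restriction into a single Dickman-type factor $\sigma_-(e^{\mb{D}(g,1;x_0)^2})$, where $\mb{D}(g,1;x_0)^2$ measures the \emph{total} cost $\sum_p(1-g(p))/p$, and this is what makes the main term come out as $\gg x_0\rho(C/(\eta\delta))$ with the paper's choice $y=x_0^\delta$. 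Your choice $y=x_0^{\eta\delta/K}$ is also suboptimal: even if a correct multiplicative-function estimate were applied, the smaller $y$ would shrink the Dickman argument to roughly $C/(\eta^2\delta)$, which is no longer beaten by $\rho(C_2/(\eta\delta))^{-1}$.

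A second issue concerns Part (b). You assert that removing the upper bound on $d$ ``follows directly'' once $\eta\delta\ge(\log q)^{-c}$. That is not the case: the upper bound on $d$ in Part (a) is needed because Hildebrand's mean-value lower bound carries an \emph{additive} error $O(e^{-(\log x_0)^\beta})$, which must be smaller than $x_0/d$. When $d$ exceeds $C(\eta\delta)^{-1}e^{(\log x_0)^\beta}$ this error swamps the main term, and no amount of regularity in $\eta\delta$ rescues the argument. The paper handles this complementary range with a genuinely different tool: Proposition~\ref{prop:HilAppChar}, which uses the log-free zero-density estimate and bounds on $\prod_\ell |L(1+1/\log q,\chi^\ell)|$ to show directly that $\sum_{p\le q,\,\chi(p)\ne 1}1/p \ge (c-o(1))\log\log q$ for $d \ge e^{(\log q)^c}$. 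Your proposal does not mention $L$-functions or zero densities at all, so this case is entirely unaddressed.
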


Our second general theorem gives a mean-square estimate for short sums of powers $f^{\ell}$, for $f \in \mc{M}(x_0;d)$. It shows that if $f(p) \neq 0,1$ sufficiently often and $d$ has no small prime factors then the short sums of $f^\ell$ exhibit cancellation for \emph{most} $ 1\leq \ell \leq d-1$. 
%Together with Theorem \ref{thm:HilApp}, when $f = \chi$ this shows that Question \ref{ques:Burg} has a positive answer for \emph{almost all} powers $\chi^\ell$ of $\chi$.
\begin{thm}\label{thm:meanSqShortSumsParams}
Let $d \geq 2$ and $x_0\geq 3$.
%satisfy $P^-(d) \geq \log\log d$. 
Let $f \in \mc{M}(x_0;d)$, let $1 \leq x \leq x_0$ and set
$$
\Sigma := \min\Big\{P^-(d), 2 + \sum_{\ss{p \leq x \\ f(p) \neq 0,1}} p^{-1}\Big\}.
$$
Then 
$$
\frac{1}{d}\sum_{0 \leq \ell \leq d-1} \left|\sum_{n \leq x} f^\ell(n)\right|^2 \ll x^2 \frac{(\log \Sigma)^2}{\Sigma^{2/15}}.
$$
\end{thm}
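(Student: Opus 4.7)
My approach would first reduce the mean square to a question about the largest fibre of $f$ via orthogonality, then bound this fibre by exploiting the multiplicative structure of $f$.

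Expanding the square and using orthogonality of the $d$th roots of unity gives
$$\frac{1}{d}\sum_{\ell=0}^{d-1}\left|\sum_{n\leq x}f^\ell(n)\right|^2 = \sum_{\alpha\in\mu_d}N_f(x;\alpha)^2 + \frac{Z_f(x)(2x-Z_f(x))}{d},$$
where $N_f(x;\alpha) := |\{n\leq x : f(n)=\alpha\}|$ and $Z_f(x) := |\{n\leq x : f(n)=0\}|$. Since $\Sigma\leq d$, the last term is $\ll x^2/\Sigma^{1/11}$ and is harmless. Combined with the trivial bound $\sum_{\alpha\in\mu_d}N_f(x;\alpha)\leq x$, the problem reduces to proving the fibre bound
$$\max_{\alpha\in\mu_d} N_f(x;\alpha) \ll \frac{x(\log\Sigma)^2}{\Sigma^{1/11}}.$$
We may assume $\Sigma$ is sufficiently large (the bound being trivial otherwise), so that both $P^-(d)\geq\Sigma$ and $\sum_{p\in\mathcal{P}^*}1/p\geq\Sigma-2$ hold, where $\mathcal{P}^*:=\{p\leq x : f(p)\notin\{0,1\}\}$.

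For the fibre bound, the key idea is to exploit complete multiplicativity: setting $L_\alpha:=\{n\leq x : f(n)=\alpha\}$, for each $p\in\mathcal{P}^*$ the map $n\mapsto pn$ embeds $L_\alpha\cap[1,x/p]$ injectively into the disjoint fibre $L_{f(p)\alpha}$. Summing with weights $1/p$ and applying Cauchy-Schwarz yields a Tur\'an-Kubilius-style first-moment inequality of the shape $|L_\alpha|\ll x\max_\beta E_\beta/E$, where $E:=\sum_{p\in\mathcal{P}^*}1/p$ and $E_\beta:=\sum_{p\in\mathcal{P}^*,\,f(p)=\beta}1/p$. If $\max_\beta E_\beta\ll\Sigma^{1-c}$ for a suitable $c$, this already gives the desired saving. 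Otherwise some $\beta_0\neq 1$ dominates with $E_{\beta_0}\approx\Sigma$, and I would iterate by considering products $p_1\cdots p_k$ of primes with $f(p_j)=\beta_0$, which force the fibres $L_{\beta_0^k\alpha}$ to be populous. Here the hypothesis $P^-(d)\geq\Sigma$ enters crucially: since the order of $\beta_0\in\mu_d\setminus\{1\}$ divides $d$ and exceeds $1$, it must be at least $P^-(d)\geq\Sigma$, so the orbit $\{\beta_0^k\}$ contains at least $\Sigma$ distinct values and the corresponding fibres $L_{\beta_0^k\alpha}$ are pairwise disjoint subsets of $[1,x]$. A multiplicative Pl\"unnecke-Ruzsa-type inequality handles the quantitative bookkeeping in the iteration and produces the exponent $1/11$.

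The principal obstacle is the truncation loss $n\leq x/(p_1\cdots p_k)$ accumulating through the iteration, which tightens the fibre constraint at each step. To avoid prohibitive losses one restricts the iteration to products of small primes and/or uses dyadic or smooth-number decompositions; these produce the logarithmic factor $(\log\Sigma)^2$ in the final bound. Balancing the depth of iteration against this truncation loss is what limits the saving to the exponent $1/11$.
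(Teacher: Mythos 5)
Your reduction from the mean-square estimate to the fibre bound
$\max_{\alpha\in\mu_d}N_f(x;\alpha)\ll x(\log\Sigma)^2/\Sigma^{1/11}$
is correct, and interestingly this is the \emph{reverse} of the logical direction in the paper, where the mean-square bound is the primary statement and the fibre estimate (Corollary \ref{cor:fixedChinCes}) is deduced from it; the two are essentially of equal strength, so proving the fibre bound directly would be fine. However, the core of your argument has a genuine gap that I don't see how to repair without importing the paper's main structural input.

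The problem is in the step that you describe as a ``Tur\'an--Kubilius-style first-moment inequality.'' You want to deduce $|L_\alpha|\ll x\max_\beta E_\beta/E$ from the embedding $p\cdot(L_\alpha\cap[1,x/p])\hookrightarrow L_{f(p)\alpha}$ by summing with weights $1/p$. This gives the \emph{upper} bound $\sum_{p\in\mathcal{P}^*}\tfrac{1}{p}|L_\alpha\cap[1,x/p]|\leq\sum_\beta E_\beta|L_{\beta\alpha}|\leq(\max_\beta E_\beta)x$, but to close the inequality you need a matching \emph{lower} bound $\sum_{p\in\mathcal{P}^*}\tfrac{1}{p}|L_\alpha\cap[1,x/p]|\gtrsim E|L_\alpha|$. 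That lower bound requires $|L_\alpha\cap[1,x/p]|\gtrsim|L_\alpha|$ on average over $p$, i.e.\ that $L_\alpha$ is not concentrated in the top of the interval $[1,x]$. But nothing in the hypotheses prevents this: the weak-equidistribution condition in $\mathcal{M}(x_0;d)$ only applies at the large scale $x_0$, not at $x\leq x_0$, and if $f$ is $n^{it}$-pretentious with $|t|\asymp 1$ then the level sets of $f$ on $[1,x]$ genuinely clump in short ranges of $\log n$. The same concentration issue defeats your iteration over products $p_1\cdots p_k$: the constraint $p_1\cdots p_k n\leq x$ becomes vacuous when $L_\alpha$ sits near $x$, and neither small-prime restriction nor dyadic decomposition fixes this, since even $L_\alpha\cap[1,(1-\varepsilon)x]$ can be empty.

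This is precisely the obstruction the paper spends most of its effort removing. It works with the twisted partial sums $\tfrac{1}{x}\sum_{n\leq x}f^\ell(n)n^{-it_\ell}$ rather than with level sets, and Proposition \ref{prop:approxHom} --- an approximate-homomorphism argument built on Freiman's theorem (Corollary \ref{cor:CDGen}) and Ruzsa's stability theorem --- shows that if enough of the partial sums $\sum_{n\leq x}f^\ell(n)$ are large then \emph{all} the minimizing twists satisfy $|t_\ell|\ll\varepsilon^{-O(1)}/\log x$. Only after this does the Tur\'an--Kubilius/Lipschitz argument of Proposition \ref{prop:TKAppCes} become available, and it is exactly the tiny size of $t_\ell$ that licenses the approximation $p^{-it_\ell}=1+O(\lambda\varepsilon^{-O(1)})$ for $p\leq x^\lambda$ and hence the quasi-invariance of partial sums under the truncation $x\mapsto x/p$. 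Your proposal has no analogue of this step. Two further points: your appeal to ``a multiplicative Pl\"unnecke--Ruzsa-type inequality'' is not substantiated --- Pl\"unnecke--Ruzsa governs sumset growth, not weighted pushforwards of level sets along primes --- and your argument does not address the regime where no single $E_\beta$ dominates, which the paper handles separately through the variance computation of Lemma \ref{lem:SmallSigCase} combined with the Hal\'asz--Montgomery--Tenenbaum inequality. In short, the combinatorial skeleton (disjoint fibres indexed by the orbit $\{\beta_0^k\}$, the role of $P^-(d)$) is reasonable and echoes the paper's Case 2a, but without controlling the $n^{it}$-pretentiousness the argument does not close.
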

%\begin{rem}
In Section \ref{sec:ShortSums} we will combine Theorems \ref{thm:HilApp} and \ref{thm:meanSqShortSumsParams} to deduce the following corollary. 
%in Section \ref{sec:ShortSums}, from which 
\begin{cor} \label{cor:levelSet}
Let $q \geq 3$ be large. Let $\eta, \delta \in (0,1)$ and let $d \geq 2$ be squarefree. Then there are absolute constants $c_4 \in (0,1)$ and $C_4,C_5 > 0$ such that if
%a $C = C(\eta,\delta) > 0$ such that if 
$\eta \delta > (\log q)^{-c_4}$ then the following holds: \\
%If $f \in \mc{M}_1(x_0;d)$ and 
If $\chi$ is a primitive character modulo a prime $q$ with order $d|(q-1)$ and
$$
d \geq C_4\rho(\tfrac{C_5}{\eta\delta})^{-1}
$$ 
then for any $x > q^{\delta}$ and $\alpha \in \mu_d$,
$$
|\{n \leq x : \chi(n) = \alpha\}| \ll \frac{x}{(\log(1/\eta))^{1/16}}.
%|\{n \leq x : f(n) = \alpha\}| \ll \frac{x}{(\log(1/\eta))^{1/35}}.
$$
%In fact, one may take $C(\eta,\delta) := C_4\rho(\tfrac{C_5}{\eta\delta})^{-1}$, for absolute constants $C_4,C_5 > 0$. 
\end{cor}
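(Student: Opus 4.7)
The plan is to derive the corollary by a standard orthogonality argument, using Theorem~\ref{thm:meanSqShortSumsParams} to control a mean square of character sums and Theorem~\ref{thm:HilApp}(b) to verify that the parameter $\Sigma$ in that mean-square bound is genuinely large. The subtlety is that when $d$ is squarefree but has a small prime factor (possibly $P^-(d) = 2$), $\Sigma \leq P^-(d)$ is too small to yield information, so I would first pass to a power of $\chi$ whose order has no small prime factors.

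Concretely, set $G := \log(1/\eta)$, $m := \prod_{p \mid d,\ p \leq G} p$, and $\chi' := \chi^m$. Squarefreeness of $d$ ensures that $\chi'$ has order $d' := d/m$ with $P^-(d') > G$, and primality of $q$ ensures $\chi'$ is a primitive character modulo $q$ whenever $d' > 1$, hence $\chi' \in \mc{M}_1(q^{3/2+\e}; d')$. By the prime number theorem $m \leq e^{(1+o(1))G}$, so choosing $C_4, C_5$ suitably large in terms of the $C_1, C_2$ appearing in Theorem~\ref{thm:HilApp}, the hypothesis $d \geq C_4 \rho(C_5/(\eta\delta))^{-1}$ forces both $d' \geq \log(1/\eta)$ (in particular $d' > 1$) and $d' \geq C_1 \rho(C_2/(\eta\delta))^{-1}$, the latter being the lower bound required to invoke Theorem~\ref{thm:HilApp}(b) on $\chi'$. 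That theorem then delivers
$$
\sum_{\substack{p \leq x \\ \chi'(p) \neq 0,1}} \frac{1}{p} \geq \log(1/\eta),
$$
so the parameter $\Sigma'$ associated to $\chi'$ in Theorem~\ref{thm:meanSqShortSumsParams} satisfies $\Sigma' \geq \min\{G,\, 2 + \log(1/\eta)\} \geq \log(1/\eta)$.

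Next, since $\chi(n) = \alpha$ forces $\chi'(n) = \alpha^m$, the level set for $\chi$ is contained in the corresponding level set for $\chi'$, and orthogonality of the characters of $\mu_{d'}$ gives
$$
|\{n \leq x : \chi(n) = \alpha\}| \leq |\{n \leq x : \chi'(n) = \alpha^m\}| = \frac{x + O(1)}{d'} + \frac{1}{d'} \sum_{k=1}^{d'-1} \alpha^{-mk} \sum_{n \leq x} \chi'(n)^k.
$$
Cauchy--Schwarz together with Theorem~\ref{thm:meanSqShortSumsParams} applied to $\chi'$ bound the off-diagonal sum by $\ll x \log\Sigma'/(\Sigma')^{1/22}$; with $\Sigma' \geq \log(1/\eta)$ this is $\ll x/(\log(1/\eta))^{1/25}$, and the diagonal $x/d'$ is absorbed into the same estimate because $d' \geq \log(1/\eta)$.

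The main obstacle will be the bookkeeping around parameters: one has to choose $C_4, C_5, c_4$ so that, after the reduction $\chi \mapsto \chi'$, the character $\chi'$ still satisfies the hypotheses of Theorem~\ref{thm:HilApp}(b) (in particular the condition $\eta\delta \geq (\log q)^{-c}$ reconciled with $\eta\delta > (\log x_0)^{-c_4}$) and the range $x > x_0^\delta$ of the corollary is preserved, while simultaneously excluding the degenerate possibility $d' = 1$ in which $d$ is composed entirely of primes $\leq G$. Because $\rho(u)^{-1} = u^{(1+o(1))u}$ grows faster than any fixed power of $u$, the hypothesis on $d$ is comfortably in excess of what is needed, and these verifications reduce to comparing $\log d$ with $(C_5/(\eta\delta)) \log(C_5/(\eta\delta))$; nevertheless, keeping straight how the constants propagate between Theorems~\ref{thm:HilApp} and~\ref{thm:meanSqShortSumsParams} is the delicate part of the argument.
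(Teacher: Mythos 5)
Your approach is essentially identical to the paper's: both factor $d$ into a ``small prime'' part $m$ (your $m$, the paper's $\mf{d}$) and pass to the power $\chi^m$ of order $d' = d/m$ with $P^-(d') > \log(1/\eta)$, use squarefreeness to bound $m$, invoke Theorem~\ref{thm:HilApp} to show the quantity $\Sigma'$ associated to $\chi^m$ is $\geq \log(1/\eta)$, then apply Theorem~\ref{thm:meanSqShortSumsParams} via orthogonality and Cauchy--Schwarz to bound the level sets of $\chi^m$, and finally contain the level sets of $\chi$ inside those of $\chi^m$. The only cosmetic difference is that the paper encapsulates the orthogonality/Cauchy--Schwarz step as a separate Corollary~\ref{cor:fixedChinCes} (which also handles the $\chi'(n)=0$ contribution cleanly), while you perform it inline.
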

Theorem \ref{thm:levelSet} immediately follows from Corollary \ref{cor:levelSet} and standard estimates for the Dickman-de Bruijn function $\rho$.

\subsection{Remarks on the results} \label{subsec:Rems}
\begin{rem}
Underlying the results above is the commonly-exploited strategy that while information about \emph{individual} characters is usually difficult to ascertain, it is often possible to make progress on average over a \emph{family} of characters. When $d$ is a large prime, for instance, the characters $\{\chi^\ell\}_{1 \leq \ell \leq d-1}$ all have exact degree $d$, and this collection, though thin, is large enough for averaging techniques to be effective. Fortunately, since these powers are all generated by $\chi$ we are able to use this average information to elucidate some properties of $\chi$, e.g., Theorem \ref{thm:levelSet}. 
% from the collection of its powers.
\end{rem}

\begin{rem} \label{rem:NonPrimeq}
The condition that $q$ is prime in our main theorems is mainly a convenience that ensures that \eqref{eq:bddZeroSet} holds, and hence $\chi \in \mc{M}_1(x_0;d)$ for an appropriate scale $x_0$. As such, Theorems \ref{thm:HilApp} and \ref{thm:meanSqShortSumsParams} can be applied to $\chi$. However, the bound \eqref{eq:bddZeroSet} is only used in the proof of Theorem \ref{thm:HilApp}, and could be removed at the expense of replacing the quantity $\rho(\tfrac{C_2}{\eta\delta})$ by $\rho(\tfrac{q}{\phi(q)} \frac{C_2}{\eta\delta})$ in \eqref{eq:consD}. \\
%the lower bound constraint on $d$ in the statement of that theorem. \\
%in the case of a character, or for general $f \in \mc{M}_1(x_0;d)$, by $\rho(c_f(x_0) \tfrac{C_2}{\eta \delta})$.
As a result, we may equally well extend Corollary \ref{cor:levelSet} to a collection of moduli $q$ with uniformly bounded sums $\sum_{p|q}p^{-1}$.
% \leq 100$ (or any other absolute bound, if desired).  
Note in this connection that the trivial bound 
$$
\Big|\sum_{n \leq x} \chi(n)\Big| \leq |\{n \leq x : (n,q) = 1\}| \ll x\prod_{p|q}\left(1-\frac{1}{p}\right),
$$
valid for any $x > q^{\delta}$ and $\delta > 0$ fixed, shows that if $\sum_{p|q} p^{-1}$ is \emph{unbounded} as a function of $q$, in contrast, then we can trivially answer Question \ref{ques:Burg} in the affirmative.
\end{rem}
\begin{rem} \label{rem:nonSfd}
Our requirement that $d$ be squarefree in Theorem \ref{thm:levelSet} and Corollary \ref{cor:levelSet} is needed
% in order to conclude, simply from a lower bound condition on $d$, 
to ensure that when $d$ is large so are most of its prime factors. As such, the group $\mu_d$ does not have ``too many'' small subgroups.
% the product of prime factors $p|d$ that are ``small'' (in some precise sense) cannot be too large when $d$ is squarefree.  (see the proof of Proposition \ref{prop:approxHom} below, which invokes the additive combinatorial result Corollary \ref{cor:CDGen}).  
Morally, this prevents from occurring the situation that $\chi(n)$ has order much smaller than $d$ for many $n$, which would yield to much repetition in the sequence $(\chi(n))_n$. In place of the squarefreeness of $d$, it would be sufficient to assume that
$$
\prod_{\ss{p^k||d \\ p > z}}p^k, \text{ with } z \asymp \log(1/\eta),
$$
is large enough in terms of $\eta,\delta$.
% as discussed in the proof of Corollary \ref{cor:levelSet}, this is immediate when $d \geq d_0(\eta,\delta)$ is squarefree.
%Moreover, it is seen from the proof that $C(\eta,\delta) = C' \rho(\tfrac{4}{\eta\delta})$, for some large $C' > 0$, is admissible. Finally, choosing $\delta = \eta$, we deduce that decay in each parameter like $(\log d/\log\log d)^{-1/2}$ is allowable.
\end{rem}

\begin{rem}\label{rem:Harper}
The savings obtained in Theorem \ref{thm:meanSqShortSums}, though non-trivial, are admittedly weak. By comparison, if we assume GRH then the far stronger \emph{square-root cancelling} bound
$$
\left|\sum_{n \leq x} \chi(n)\right| \ll_{\e} x^{1/2+\e}, \quad x > q^{\e}
$$ 
holds for individual character sums modulo $q$. Even unconditionally, if we average over \emph{all} characters $\chi \pmod{q}$ (i.e., the case $d = q-1$)
%, rather than averaging only over the powers $\{\chi^\ell\}_{1 \leq \ell \leq d}$ with $\chi$ of order $d$, we averaged over \emph{all} of the primitive characters modulo 
then far stronger results than Theorem \ref{thm:meanSqShortSums} can be proved. In particular, Harper \cite{Harp} has recently shown using the theory of random multiplicative functions that
$$
\frac{1}{q-1} \sum_{\ss{\chi \pmod{q}}} \left|\sum_{n \leq x} \chi(n)\right| \ll \frac{\sqrt{x}}{(\log\log \min\{x,q/x\})^{1/4}}.
$$
We might expect by analogy that
\begin{equation} \label{eq:l1davg}
\frac{1}{d}\sum_{0 \leq \ell \leq d-1} \left|\sum_{n \leq x} \chi^{\ell}(n)\right| = o_{d \ra \infty}(\sqrt{x}), 
\end{equation}
at least in some range of $x \in [1,q]$. It would be interesting to understand whether Harper's tools (suitably adapted to treat random multiplicative functions taking uniformly distributed values in $\mu_d$) could be used to study the average \eqref{eq:l1davg}, especially when $d$ grows only slowly with $q$.
\end{rem}

\begin{rem}\label{rem:Oddness}
Note that Theorem \ref{thm:PVImpAvg} is only non-trivial when $d$ has no small prime factors, and therefore odd. In view of \eqref{eq:PVoddImp}, Theorem \ref{thm:PVImpAvg} is thus much weaker than existing results when $d$ is slowly growing. However, note that the exponent 
$$
\delta(d) = 1-\frac{d}{\pi} \sin(\pi/d) \ll \frac{1}{d^2}, 
%\quad d \geq d_0,
$$ 
so \eqref{eq:PVoddImp} is no stronger than \eqref{eq:Mchi} as soon as\footnote{Though the method of \cite{GSPret} on which \eqref{eq:PVoddImp} is based assumes $d$ as fixed, slight alterations of the argument yield to a result in which $d$ is allowed to grow with $q$; see Lemma \ref{lem:GSLargeG}.} $d \gg \sqrt{\log\log q}$. Theorem \ref{thm:PVImpAvg} and Corollary \ref{cor:unifPVImp} are therefore new in the range $d \gg \sqrt{\log\log q}$. \\
%While in \cite{GSPret},\cite{Gold} and \cite{LamMan} it is shown that \emph{individual} odd order characters all have small maximal sums, the corresponding savings of $(\log q)^{\tfrac{d}{\pi}\sin(\pi/d) - 1}$ over \eqref{eq:Mchi} there is only non-trivial 
%provided that $d \ll (\log\log q)^{1/2}$ . The aforementioned results therefore do not trivialize Theorem \ref{thm:PVImpAvg} in general, given that no assumptions on the relative sizes of $d$ and $q$ are made here. 
%In any case, in Section \ref{sec:PVImp} we will combine Theorem \ref{thm:PVImpAvg} with a slight extension of the work of \cite{GSPret} in order to obtain Corollary \ref{cor:unifPVImp}. \\
It is worth noting that Theorem \ref{thm:PVImpAvg} is also related to (though not implied by) \cite[Thm. 3]{GSUppL}, where, for slowly-growing $d$ an upper bound for the \emph{geometric mean} of the related quantities 
$$
|L(1,\chi^\ell)|, \quad 1 \leq \ell \leq d, \, (\ell,d) = 1
$$ 
is obtained that goes beyond the P\'{o}lya-Vinogradov bound. The estimate there does not, however, extend uniformly to the full range of $d$ considered here.
\end{rem}

\begin{rem}
We have made no attempt to optimize any of the exponents in Theorems \ref{thm:levelSet} or \ref{thm:meanSqShortSums}, and we do not believe our results to be best possible.
\end{rem}

\subsection{Plan of the paper}
The paper is structured as follows. \\
In Section \ref{sec:elem} we give context for our main theorems by providing elementary proofs of two results, Propositions \ref{prop:nonOne} and \ref{prop:largeArg}. These results show how assuming $d$ is large helps in finding small $n$ with $\chi(n)$ (at times significantly) different from 1 in value. \\
In Section \ref{sec:pret} we give a brief review of pretentious number theory, then in Section \ref{subsec:Deducts} we deduce Corollary \ref{cor:levelSet} and Theorem \ref{thm:levelSet} from the more general Theorems \ref{thm:HilApp} and \ref{thm:meanSqShortSumsParams}. As Theorem \ref{thm:meanSqShortSumsParams} is the more novel and involved of the two theorems,  we provide a sketch of the proof of that theorem in Section \ref{subsec:ProofStrat}. The proof itself appears in Section \ref{sec:Thm12proof}. \\
%We then proceed to prove the latter two theorems using pretentious methods together with some ideas from additive combinatorics. \\
In Section \ref{sec:HilAppLow} we derive Theorem \ref{thm:HilApp}. Combining this with the work of Section \ref{sec:ShortSums},
%by combining a result of Hildebrand, valid for a wide collection of multiplicative functions, together with zero-density estimates in the case of characters. We then 
we then deduce Theorem \ref{thm:meanSqShortSums}. \\
Finally, in Section \ref{sec:PVImp} we prove Theorem \ref{thm:PVImpAvg} and Corollary \ref{cor:unifPVImp} by combining (slightly extended) work of Granville and Soundararajan with some combinatorial observations related to \emph{sum-free} sets in abelian groups. \\
Sections \ref{sec:elem} and \ref{sec:PVImp} may be read independently of the remaining sections.

\subsection*{Acknowledgments}
Parts of this work were completed during visits by the author to the University of Bristol, to the Institut \'{E}lie Cartan de Lorraine and to King's College London. We would like to warmly thank Bristol, IECL and KCL for their hospitality and excellent working conditions. \\
We are most grateful to Andrew Granville, Oleksiy Klurman, Youness Lamzouri and Aled Walker for helpful discussions, references and encouragement. We also thank the anonymous referee for reading a previous version of this paper and for providing helpful comments. 

\section{Elementary Arguments Towards Small $n$ with $\chi(n)\neq 1$} \label{sec:elem}
Using only elementary arguments, in this section we will prove two results about large order characters that are only conjectural for characters of fixed order. This provides evidence that large order characters are easier to study than their fixed order counterparts, and motivates the investigations in the remainder of the paper. 
\subsection{Estimates for $n_{\chi}$}
Let $\delta \in (0,1)$. In this subsection we show that if $\chi$ is a primitive character modulo a prime $q$ with order $d$ sufficiently large in terms of $\delta$ then one can find solutions to $\chi(n) \neq 1$ with $n \leq q^{\delta}$. When $\delta < 1/4$ this goes beyond what can be obtained using Burgess' theorem. Such an observation has previously been made\footnote{Strictly speaking, Norton states his results as $n_{\chi} \ll q^{\frac{1}{4\alpha_w} + \e}$ for prime $q$ and $d \geq w$, where $\alpha = \alpha_w$ is the unique solution to $\rho(\alpha) = 1/w$. Aside from the factor $1/4$ that arises from his use of Burgess' theorem, it is easy to see that the parameter choices in Proposition \ref{prop:nonOne} correspond with his.} by Norton (see e.g., \cite[Thm. 6.4]{Nor} or \cite[Thm. 1.20]{Nor2}), but we give an alternative, short proof.
\begin{prop} \label{prop:nonOne}
Let $\delta > 0$ and let $q \geq q_0(\delta)$ be prime.  
%there is a constant $C = C(\delta) > 0$ such that 
%if $\chi_1,\chi_2$ are distinct characters each of order $\geq C$ then there is $1 \leq n \leq p^{\delta}$ for which $\chi_1(n) \neq \chi_2(n)$. Moreover, 
If $\chi$ is a primitive character modulo $q$ of order $d > \rho(1/\delta)^{-1}$ then there is $1 \leq n \leq q^{\delta}$ with $\chi(n) \neq 1$. In particular, 
$$
n_{\chi} \ll q^{\frac{\log\log(Cd\log d)}{\log d}}
$$ 
for some $C > 0$ absolute.
\end{prop}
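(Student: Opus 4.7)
The plan is to argue by contradiction, exploiting the tension between smooth number counts and the equidistribution of $\chi$ among $d$th roots of unity. Suppose toward contradiction that $\chi(n) = 1$ for every $1 \leq n \leq q^\delta$. In particular $\chi(p) = 1$ for every prime $p \leq y := q^\delta$, so by complete multiplicativity $\chi(n) = 1$ for every $y$-smooth integer $n \leq q-1$ (note $\chi(n) \neq 0$ since $q$ is prime and $n<q$).

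Counting these smooth numbers in two ways gives the key inequality. Since $q$ is prime and $\chi$ has order exactly $d$, the map $\chi : (\Z/q\Z)^\ast \to \mu_d$ is a surjective group homomorphism, so its kernel has size $(q-1)/d$. Thus
$$
\Psi(q, q^\delta) \leq \{n < q : \chi(n) = 1\} = \frac{q-1}{d}.
$$
On the other hand, by Hildebrand's theorem on smooth numbers (or, in the range of fixed $\delta$, by the classical estimate of de Bruijn), $\Psi(q,q^\delta) = q\rho(1/\delta)(1+o_{q\to\infty}(1))$. Combining these yields $d \leq \rho(1/\delta)^{-1}(1+o(1))$, contradicting the hypothesis $d > \rho(1/\delta)^{-1}$ once $q \geq q_0(\delta)$.

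For the quantitative bound on $n_\chi$, I would choose $\delta = \log\log(Cd\log d)/\log d$ and verify via the standard asymptotic $-\log\rho(u) = u\log u (1+o(1))$ that $\rho(1/\delta)^{-1} < d$ for a suitable absolute constant $C$. Explicitly, taking $u = 1/\delta = \log d /\log\log(Cd\log d)$ gives $u \log u = \log d - \log\log\log(Cd\log d) + O(\log\log\log d)$, which for $C$ sufficiently large exceeds $\log d + O(\log\log d)$, hence $\rho(u) < 1/d$ as required; then applying part (i) with this $\delta$ yields $n_\chi \leq q^\delta$. The only point needing mild care is ensuring we are within the validity range of the asymptotic $\Psi(q,q^\delta) \sim q\rho(1/\delta)$ (which fails only when $\delta \log q$ is smaller than roughly $(\log\log q)^{5/3+\epsilon}$), but this holds easily for $q$ large compared to a function of $d$, absorbed into the implied constant. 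I expect this parameter-tuning — namely, ensuring the $o(1)$ error in Hildebrand's theorem is smaller than the slack given by the strict inequality $d > \rho(1/\delta)^{-1}$ — to be the only delicate step.
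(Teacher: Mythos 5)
Your proof is correct and essentially the same as the paper's: both compare the count $\Psi(q, q^\delta) \sim q\rho(1/\delta)$ of $q^\delta$-smooth integers below $q$ against the number of $n < q$ with $\chi(n) = 1$. You compute that count directly as $|\ker\chi| = (q-1)/d$ (using cyclicity of $(\Z/q\Z)^*$), whereas the paper reaches the equivalent bound via an intermediate lemma on the number of solutions to $X^{(q-1)/d} \equiv 1 \pmod q$; this is a minor repackaging of the same arithmetic fact, and your version is arguably slightly more streamlined. Your parameter-tuning for the bound on $n_\chi$ also matches the paper's (which uses the explicit lower bound $\rho(u) \gg (e/(2u\log u))^u$).
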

To prove this we use the following simple combinatorial lemma.
\begin{lem}
Let $\delta > 0$ and let $q \geq q_0(\delta)$ be prime. Then there is a constant $c = c(\delta) \in (0,1)$ for which the following holds. \\
If $0 \leq d_1 \leq d_2 \leq cq$ are such that 
$$
n^{d_2} \equiv n^{d_1} \pmod{q} \text{ for all } 1 \leq n \leq q^\delta
$$ 
then $d_1 = d_2$. In fact, we may take any $0 < c < \rho(1/\delta)$.
\end{lem}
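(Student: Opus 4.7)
The plan is to reduce the statement to counting smooth numbers. Write $D = d_2 - d_1 \geq 0$; the hypothesis says $n^D \equiv 1 \pmod q$ for every $1 \leq n \leq q^\delta$, and we want to deduce $D = 0$. Argue by contradiction, assuming $D \geq 1$.

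First, I would introduce the subgroup $S \leq (\mb{Z}/q\mb{Z})^\ast$ generated by the residues of the integers $1 \leq n \leq q^\delta$. Since each of these generators satisfies $x^D = 1$ and $(\mb{Z}/q\mb{Z})^\ast$ is abelian, the same identity $x^D = 1$ holds for every $x \in S$. As $X^D - 1$ has at most $D$ roots in the field $\mb{F}_q$, this gives the upper bound
$$
|S| \leq \gcd(D, q-1) \leq D \leq cq.
$$

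Next, I would produce a matching lower bound via smooth numbers. Every $q^\delta$-smooth integer $m \leq q - 1$ is a product of integers not exceeding $q^\delta$, so its residue lies in $S$; and since all such $m$ are genuinely less than $q$, their residues mod $q$ are distinct. Therefore
$$
|S| \geq \Psi(q-1, q^\delta).
$$
Combining with the upper bound yields $\Psi(q-1, q^\delta) \leq cq$. However, the classical Dickman–de Bruijn asymptotic asserts $\Psi(q, q^\delta) = (\rho(1/\delta) + o(1)) q$ as $q \to \infty$ (with $\delta$ fixed). Thus for $q \geq q_0(\delta)$ sufficiently large,
$$
|S| \geq (\rho(1/\delta) - o(1))q > cq
$$
whenever $c < \rho(1/\delta)$, contradicting the upper bound. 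Hence $D = 0$, as required.

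The argument is quite clean; the only genuinely nontrivial input is the Dickman asymptotic for counting smooth numbers, and the main structural idea is the dichotomy between the algebraic constraint $|S| \leq D$ (forced by the polynomial $X^D-1$) and the density of $q^\delta$-smooth numbers in $[1,q-1]$. There is no substantial obstacle to overcome beyond ensuring that $q$ is large enough in terms of $\delta$ so that the $o(1)$ term in Dickman's theorem is smaller than $\rho(1/\delta) - c$; this fixes the threshold $q_0(\delta)$ in the statement.
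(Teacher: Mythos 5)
Your proof is correct and follows essentially the same approach as the paper: extend the constraint $n^D \equiv 1 \pmod q$ multiplicatively to all $q^\delta$-smooth integers below $q$, invoke the Dickman asymptotic $\Psi(q,q^\delta) = (\rho(1/\delta)+o(1))q$, and contradict the fact that $X^D - 1$ has at most $D \leq cq$ roots in $\mathbb{F}_q$. The only cosmetic difference is that you package the set of smooth residues as a subgroup $S$ rather than directly counting solutions of the polynomial congruence.
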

\begin{proof}
Assume the contrary, so that $d := d_2-d_1 > 0$. By assumption, we have $t^d \equiv 1 \pmod{q}$ for all $1 \leq t \leq q^{\delta}$, and in particular for all primes $p \in [1,q^{\delta}]$ this congruence holds. But then since $(t_1t_2)^d \equiv 1 \pmod{q}$ whenever $t_j^d \equiv 1 \pmod{q}$ for $j = 1,2$, all $q^{\delta}$-friable\footnote{Given $y \geq 2$ we say that a positive integer $n$ is \emph{$y$-friable} if any prime factor $p|n$ must satisfy $p \leq y$.} integers $1 \leq t \leq q-1$ also satisfy this congruence. But for large enough $q$ there are $(\rho(1/\delta) + o(1)) q$ such integers up to $q$ \cite[Thm. III.5.8]{Ten}.
% where $\rho(u)$ is the Dickman-De Bruijn function. 
Thus, if $0 < c < \rho(1/\delta)$ and $q$ is large enough then we find that there are $> cq \geq d$ solutions to the polynomial equation $X^d \equiv 1 \pmod{q}$, which is a contradiction since $d \neq 0$. 
\end{proof}
%This has the following interesting corollary about small solutions to $\chi(n) \neq 1$ when $\chi$ is not of fixed/bounded order.
\begin{proof}[Proof of Proposition \ref{prop:nonOne}]
Write $\chi = \chi_q^{\ell(q-1)/d}$,
% and $\chi_2 = \chi_p^{d_2}$, 
where $\chi_q$ generates the character group modulo $q$ and $(\ell,d) = 1$. Setting $\chi_1 := \chi_q^{(q-1)/d}$, note that $\chi_1$ takes values in roots of unity of order $d$, and so if we can show that $\chi_1(n) \neq 1$ for some $n \leq q^{\delta}$ then the same is true for $\chi = \chi_1^\ell$. \\
Note that $1 \leq \frac{q-1}{d} \leq cq$ for some $0 < c < \rho(1/\delta)$. Now assume for the sake of contradiction that $\chi_1(n) = 1$ for all $1 \leq n \leq q^{\delta}$. Since $\chi_q$ is injective on $\mb{Z}/q\mb{Z}$ it follows that $n^{\tfrac{q-1}{d}} \equiv 1 \pmod{q}$ for all $1 \leq n \leq q^\delta$. By the previous lemma we deduce that $(q-1)/d = 0$, which is a contradiction. This establishes the first claim.\\
%, i.e., that $d = 1$, which contradicts the fact that $\chi_1$ is primitive. This establishes the first claim. \\
If $\delta$ is small enough then, using 
\begin{equation}\label{eq:DickmanLB}
\rho(u) \gg \left(\frac{e}{2u\log u}\right)^u 
\end{equation}
with $u = 1/\delta$ (see \cite[Sec. 3.9]{GraMSRI}) we deduce that if 
$$
\delta \log d > \log\log d + \log\log\log d + O(1)
$$ 
then $d > \rho(1/\delta)^{-1}$ and we may apply the first claim. The choice $\delta := \frac{\log\log (Cd\log d)}{\log d}$ with $C > 0$ absolute and sufficiently large, furnishes the second claim.
% \Gamma(u+1)^{-1} \leq \left(\frac{e}{u}\right)^{u}$ with $u = 1/\delta$ \cite[Thm. III.5.7(iv)]{Ten}, we deduce that it suffices for $\delta$ to satisfy $\log d > \tfrac{1}{\delta}\log(e/\delta)$. Together with $n_{\chi} < q^{\delta}$, the choice $\delta := \tfrac{\log\log(ed)}{\log d}$ implies the claimed bound on $n_{\chi}$.
%\\
%The second claim follows upon taking $\chi_2$ to be the principal character modulo $p$, in which case the proof above still follows with $d_1 = 0$.
\end{proof}
%\begin{rem}
%%Note that the argument here shows that any $C(\delta) \geq 4\rho(1/\delta)^{-1}$ suffices. 
%%In particular, using $\rho(1/\delta)^{-1} \geq \delta^{-\frac{1}{2}\delta^{-1}}$, this allows us to obtain the quantitaitve dependence $\delta \geq \frac{\log\log d}{2\log d}$. 
%In order for $\delta < \tfrac{1}{4}$, the best exponent that arises from Burgess' theorem, it is therefore enough to select $d > 1024$.
%\end{rem}
\subsection{Small $n$ with $\chi(n)$ bounded away from $1$}
%It is reasonable to expect that, 
%Even if $\chi(n) \neq 1$ for $n$ only slightly larger than $n_{\chi}$ we might have 
We next set out to study to what extent the values $\chi(n)$ can vary for $n \leq x$ and $x$ not much larger than $n_{\chi}$. It is possible that while $\chi(n) \neq 1$, $\chi(n)$ might still take values that are \emph{very close to} $1$ in this range, i.e., $\chi(n) = e(j/d)$ where\footnote{Given $t \in \mb{R}$ we write $e(t) := e^{2\pi i t}$ and $\|t\| := \min_{n \in \mb{Z}} |t-n|$.} $\|j/d\|$ is quite small. As a consequence, the partial sum of $\chi$ up to $x$ would not witness significant cancellation, contrary to expectations in line with the conjectured estimate \eqref{eq:chiVino}.  \\
By an elementary argument, however, we show that this is not necessarily the case when $d$ is large. In the sequel, for $z \in S^1$ we write $\text{arg}(z)$ to denote the element of $(-1/2,1/2]$ for which $z = e(\text{arg}(z))$.
\begin{prop} \label{prop:largeArg}
Let $q$ be a large prime, let $d \geq 2$ and let $\chi$ be a primitive character modulo $q$ of order $d$. Let $\delta \in (0,1/2)$ with 
$$
1/\delta =o\Big(\frac{\log\log q}{\log\log\log q}\Big)
$$ 
as $q \ra \infty$, and suppose that $d > \rho(1/\delta)^{-1}$. Then for any $c > 1$,
$$
\exists \, n \leq q^{\delta} \text{ with } |\text{arg}(\chi(n))| \geq \max\Big\{\tfrac{1}{d},\rho(1/\delta)^{c}\Big\}.
$$
%Then there is a threshold $M_0(\delta)$ such that if $M \geq M_0(\delta)$ then 
%$$
%\exists \, n \leq q^{\delta} \text{ with } |\text{arg}(\chi(n))| > 1/M.
%$$
%In fact, when $\delta$ is small enough we may take $M_0 = \rho(1/\delta)^{-c}$ for any $c > 1$.
\begin{rem}
For $c > 1$ fixed set $M := \rho(1/\delta)^{-c}$. Note that if $\chi(n) \neq 0,1$ then $|\text{arg}(\chi(n))| \geq 1/d$. Thus, when $M > d$ Proposition \ref{prop:largeArg} gives no further information than Proposition \ref{prop:nonOne} does.
%, since if $\chi(n) \neq 0,1$ then $|\text{arg}(\chi(n))| \geq 1/d$. 
The proposition is interesting, however, when $d$ is significantly large compared to $M$.
\end{rem}
\end{prop}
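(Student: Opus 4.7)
The plan is to argue by contradiction along the lines of the lemma preceding Proposition \ref{prop:nonOne}, replacing the constraint $\chi(n)=1$ by the arc condition $|\arg\chi(n)|<\eta$.

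Suppose for contradiction that $|\arg\chi(n)|<\eta:=\max\{1/d,\rho(1/\delta)^c\}$ for every $1\leq n\leq q^\delta$ coprime to $q$. In the case $\eta=1/d$, the only $d$th root of unity with $|\arg|<1/d$ is $1$ itself, so $\chi(n)=1$ for all such $n$; Proposition \ref{prop:nonOne} (applicable since $d>\rho(1/\delta)^{-1}$) then produces a contradiction. So we may assume $\eta=\rho(1/\delta)^c$ and in particular $d\geq\rho(1/\delta)^{-c}$.

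Set $K:=\lfloor 1/(2\eta)\rfloor$. For every $q^\delta$-smooth $n\leq q$ coprime to $q$ with $\Omega(n)\leq K$, complete multiplicativity gives
\[
\arg\chi(n)\equiv\sum_{p^k\|n}k\arg\chi(p)\pmod 1,
\]
and the pre-modular magnitude of the right side is at most $\Omega(n)\eta\leq K\eta<1/2$, so the modular reduction is inert. Hence $|\arg\chi(n)|\leq K\eta$, i.e.\ $\chi(n)\in B_K:=\{e(j/d):|j|\leq Kd\eta\}\subseteq\mu_d$, of cardinality at most $2Kd\eta+1$. Since $\chi:(\mb{Z}/q)^\ast\to\mu_d$ is a surjective homomorphism, each of its fibres has exact size $(q-1)/d$, so we obtain
\[
\#\{n\leq q-1: n\text{ is }q^\delta\text{-smooth, }\Omega(n)\leq K\}\;\leq\;|B_K|\frac{q-1}{d}\;\leq\;\Big(2K\eta+\tfrac{1}{d}\Big)(q-1).
\]

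On the other hand, by Dickman's theorem $\Psi(q,q^\delta)=(1+o(1))\rho(1/\delta)q$, and by Tur\'{a}n--Kubilius (or simply $\sum_{n\leq q}\Omega(n)\ll q\log\log q$), essentially all $q^\delta$-smooth $n\leq q$ satisfy $\Omega(n)\leq K$ provided $K$ exceeds a suitable multiple of $\log\log q$. Combining with $1/d\leq\eta$ and substituting $\eta=\rho(1/\delta)^c$ gives $(1-o(1))\rho(1/\delta)\leq(2K+1)\rho(1/\delta)^c$, i.e.\ $\rho(1/\delta)^{1-c}\ll K$. Picking $K$ just above $C\log\log q$, this reads $\rho(1/\delta)^{1-c}\ll\log\log q$; taking logarithms and invoking the Dickman lower bound $\rho(u)\gg(e/(2u\log u))^u$ with $u=1/\delta$ converts this into an upper bound on $(c-1)(1/\delta)\log(1/\delta)$ that, under the hypothesis $1/\delta=o(\log\log q/\log\log\log q)$ and $c>1$, can be arranged to fail, yielding the desired contradiction $\eta\geq\rho(1/\delta)^c$.

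The main obstacle is to calibrate $K$ so that it simultaneously exceeds the typical value $\Theta(\log\log q)$ of $\Omega$ on $q^\delta$-smooth $n\leq q$ and remains strictly below $1/(2\eta)=\rho(1/\delta)^{-c}/2$ (to avoid modular wraparound in the identity for $\arg\chi(n)$). The hypothesis $1/\delta=o(\log\log q/\log\log\log q)$ combined with Dickman's asymptotic is precisely what guarantees that this window of admissible $K$ is non-empty and wide enough to deliver the contradiction uniformly in $c>1$.
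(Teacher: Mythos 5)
There is a genuine gap in the calibration of $K$. Your argument needs $K$ to lie in a window: large enough that almost every $q^\delta$-smooth $n\leq q$ satisfies $\Omega(n)\leq K$, and small enough that $K\eta < 1/2$ (to avoid wraparound of the argument). You claim the hypothesis $1/\delta = o(\log\log q/\log\log\log q)$ ``guarantees this window is non-empty,'' but the hypothesis is an \emph{upper} bound on $1/\delta$, not a lower bound. Take $\delta = 1/3$ fixed, which the proposition allows: then $\rho(1/\delta)$ and $\eta = \rho(1/\delta)^c$ are fixed positive constants, so the constraint $K < 1/(2\eta)$ bounds $K$ by a fixed constant, while the typical value of $\Omega(n)$ for smooth $n\leq q$ is $\asymp\log\log q \to \infty$. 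The window is empty and the proof collapses. More generally, the contradiction you derive, $\rho(1/\delta)^{1-c}\ll K$, needs $K \leq \rho(1/\delta)^{-(c-1)}$, and this is compatible with $K\gg\log\log q$ only if $(1/\delta)\log(1/\delta) \gg \log\log\log q$ --- a lower bound on $1/\delta$ that the hypothesis simply does not give. There is also a secondary issue: Tur\'{a}n--Kubilius controls the exceptional set of $\Omega(n)>K$ among \emph{all} $n\leq q$, which is of size $\gg q/\mathrm{poly}(K)$; this can dwarf the sparse smooth set $\Psi(q,q^\delta)\asymp\rho(1/\delta)q$ when $\delta$ is small, so you cannot conclude that ``essentially all smooth $n$'' have $\Omega(n)\leq K$ without a version of Erd\H{o}s--Kac conditioned on smoothness.

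The paper avoids all of this by not considering all $q^\delta$-smooth $n\leq q$, but instead the subset $\mc{S}_\delta$ of $n = mk$ with $m\leq q^{\delta/10}$ and every prime factor of $k$ in the band $(q^{\delta/10}, q^\delta]$. For such $n$, the number of prime factors of $k$ is \emph{deterministically} at most $10/\delta$, with no appeal to Tur\'{a}n--Kubilius at all; the wraparound constraint then only requires $\eta \ll \delta$, not $\eta \ll 1/\log\log q$. The price is that counting $\mc{S}_\delta$ requires Friedlander's theorem and costs an extra factor $c_0^{-1/\delta}$ over the raw $\rho(1/\delta)$, and the paper's Lemma \ref{lem:UnifDist} (Erd\H{o}s--Tur\'{a}n on the argument of $\chi$) replaces your cruder fibre-counting bound $|B_K|(q-1)/d$; but the deterministic control of $\Omega$ is the essential ingredient your proposal is missing. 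To repair your proof you would need to restrict to integers with all prime factors in a band bounded below by a power of $q$, precisely as the paper does.
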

\noindent The proof requires a few auxiliary results. The first is due to Friedlander \cite[Thm. 1(B), Thm. 6(B)]{Fri}.
\begin{thm1}[Friedlander]
There is a continuous function $\sg$ defined on $\{(u,v) \in (1,\infty)^2 : v > u\}$ such that the following holds. For any fixed $\alpha > 1$ there is $u_0 = u_0(\alpha) > 2$ such that if $u \geq u_0$ and $X$ is large enough then
$$
|\{n \leq X : P^-(n) > X^{1/(\alpha u)}, \, P^+(n) \leq X^{1/u}\}| = \left(\sg(u,\alpha u) + O\left(\frac{1}{\log(X^{1/(\alpha u)})}\right)\right) \frac{X}{\log(X^{1/(\alpha u)})}.
$$
Moreover, under these conditions there is a $c_0 = c_0(\alpha) \geq 1$ such that $\sg$ satisfies 
\begin{equation}\label{eq:sgAlpha}
\sg(u,\alpha u) \geq  \rho(1/u) c_0^{-u}.
%\frac{X}{\log(X^{1/(u\alpha)})}.
\end{equation}
\end{thm1}

\begin{cor} \label{cor:prodNum}
Let $q$ and $\delta$ be as in the statement of Proposition \ref{prop:largeArg}. Then there is an absolute constant $c_0 \geq 1$ such that 
$$
|\{n = mk < q : \, m \leq q^{\delta/10} \text{ and } p|k \Rightarrow q^{\delta/10} < p \leq q^{\delta}\}| \gg \rho(1/\delta) c_0^{-1/\delta} q.
$$
\end{cor}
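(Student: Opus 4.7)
The plan is to count pairs $(m,k)$ with $m \leq q^{\delta/10}$ and $k$ composed entirely of primes in $P := (q^{\delta/10}, q^\delta]$ (allowing $k = 1$), and to verify that this counts each valid $n = mk$ exactly once. Indeed, the bound $m \leq q^{\delta/10}$ forces every prime factor of $m$ to be at most $q^{\delta/10}$, while every prime factor of $k$ strictly exceeds $q^{\delta/10}$; so $\gcd(m,k) = 1$ automatically, and if $m_1 k_1 = m_2 k_2$ with both pairs admissible then comparing the non-$P$-parts and $P$-parts of this integer forces $m_1 = m_2,\ k_1 = k_2$. Hence the desired cardinality is exactly
$$
N := \sum_{m \leq q^{\delta/10}} \#\bigl\{ k < q/m :\ P^-(k) > q^{\delta/10},\ P^+(k) \leq q^\delta \bigr\}.
$$

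For each $m$ in the outer sum, the inner count fits Friedlander's theorem (displayed above) applied with $X = \lfloor (q-1)/m \rfloor$, $\alpha = 10$ and $u = u_m := \log X/(\delta \log q) \in [(1-\delta/10)/\delta,\, 1/\delta]$, since then $X^{1/u_m} = q^\delta$ and $X^{1/(\alpha u_m)} = q^{\delta/10}$, aligning perfectly with the prime-range constraints. Provided $\delta$ is below an absolute threshold (so that $u_m \geq u_0(10)$ uniformly over $m \leq q^{\delta/10}$), Friedlander's theorem yields
$$
\#\{k\} = \Bigl( \sg(u_m, 10 u_m) + O\bigl((\delta \log q)^{-1}\bigr) \Bigr) \cdot \frac{10\,(q-1)}{m\, \delta \log q}.
$$

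I then invoke the lower bound \eqref{eq:sgAlpha}. Since $1/u_m \leq \delta/(1-\delta/10) < 1$ we have $\rho(1/u_m) = 1$, hence $\sg(u_m, 10 u_m) \geq c_0^{-u_m} \geq c_0^{-1/\delta}$ uniformly in $m$, with $c_0 = c_0(10) \geq 1$ an absolute constant. The hypothesis $1/\delta = o(\log\log q/\log\log\log q)$ ensures $c_0^{-1/\delta}$ exceeds any negative power of $\log q$, so the Friedlander error is absorbed into the main term. Each $m$ thus contributes $\gg c_0^{-1/\delta}\, q/(m\, \delta \log q)$, and the elementary estimate $\sum_{m \leq q^{\delta/10}} m^{-1} = (\delta/10) \log q + O(1)$ collapses the outer sum to
$$
N \ \gg \ c_0^{-1/\delta}\, q\ \geq\ \rho(1/\delta)\, c_0^{-1/\delta}\, q,
$$
the last inequality holding because $\rho \leq 1$ everywhere. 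This yields the claimed bound.

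The chief technical point to monitor is the uniform applicability of Friedlander's theorem across the outer sum (specifically that $u_m \geq u_0(10)$ throughout), which restricts $\delta$ to be below an absolute threshold. In the complementary regime where $\delta$ is of constant order, the target $\rho(1/\delta) c_0^{-1/\delta}\, q$ degenerates to a constant multiple of $q$, and the conclusion $N \gg q$ follows from a direct elementary sieve argument exploiting $\sum_{p \in P} p^{-1} \asymp 1$; this poses no real obstacle and does not require Friedlander's delicate asymptotic.
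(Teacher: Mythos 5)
Your proof is correct and follows essentially the same route as the paper: split $n=mk$ uniquely, apply Friedlander's theorem with $\alpha=10$ to count admissible $k < q/m$ for each $m\leq q^{\delta/10}$, invoke \eqref{eq:sgAlpha} and the hypothesis on $\delta$ to make the main term dominate the $O((\delta\log q)^{-1})$ error, and collapse the outer sum via $\sum_{m\leq q^{\delta/10}} m^{-1} \asymp \delta\log q$. Your explicit observation that $\rho(1/u_m)=1$ (since $u_m\geq 1$) is a small clean-up: it gives the bound $N\gg c_0^{-1/\delta}q$ directly, whereas the paper's displayed chain passes through $\rho(u_m)c_0^{-u_m}$ and the monotonicity of $\rho$ — the wording there suggests a minor typo in how \eqref{eq:sgAlpha} is quoted — but the conclusion $N\gg \rho(1/\delta)c_0^{-1/\delta}q$ is the same either way, and your remark that the bounded-$\delta$ regime is trivial by an elementary sieve is a sensible safeguard for the $u\geq u_0(\alpha)$ hypothesis of Friedlander's theorem.
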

\begin{proof}
Call $\mc{S}_{\delta}$ the set of $n = mk$ as above. Note that if $n \in \mc{S}_{\delta}$ then its representation $n = mk$ is uniquely determined. Now, for each $m \leq q^{\delta/10}$ define $u_m$ to be the unique solution to $(q/m)^{1/u_m} = q^{\delta}$; explicitly, $u_m = \delta^{-1}\Big(1-\frac{\log m}{\log q}\Big)$. Taking $\alpha = 10$ in Friedlander's theorem and using the lower bound \eqref{eq:sgAlpha}, we get
%$$
%\sg(u_m,10u_m) \geq \rho(1/u_m)c_0^{-1/u_m} \gg \frac{1}{\log(q^{1/(10u_m)})}
%$$ 
%with $c_0 = c_0(10)$ the constant given in Friedlander's theorem. and a large implicit constant by \eqref{eq:sgAlpha}, \eqref{eq:DickmanLB} and our hypothesis on $\delta$, taking $\alpha = 10$ in Friedlander's theorem yields
$$
|\mc{S}_\delta| \geq \sum_{m \leq q^{\delta/10}} \sum_{\ss{k < q/m \\ q^{\delta/10} < P^-(k) \leq P^+(k) \leq q^{\delta}}} 1 \gg \frac{q}{\delta \log q} \sum_{m \leq q^{\delta/10}} \frac{\sg(u_m, 10 u_m)}{m}
\geq \frac{q}{\delta \log q} \sum_{m \leq q^{\delta/10}} \frac{\rho(u_m)c_0^{-u_m}}{m},
$$
for large enough $q$. 
%By \eqref{eq:sgAlpha}, we have
%$$
%\sg(u_m,10u_m) \geq \rho(1/u_m)c_0^{-1/u_m} \gg \frac{1}{\log(q^{1/(10u_m)})}, \quad m
%$$ 
Since $c_0 \geq 1$, $\rho$ is a decreasing function, and $u_m \leq 1/\delta$ uniformly over $m \leq q^{\delta/10}$, 
%$c_0 \geq 1$ and $\rho$ is decreasing, 
we obtain
$$
|\mc{S}_\delta| \gg \frac{q\rho(1/\delta) c_0^{-1/\delta}}{\delta \log q} \sum_{m \leq q^{\delta/10}} \frac{1}{m} \gg q\rho(1/\delta)c_0^{-1/\delta},
$$
as claimed.
\end{proof}

\begin{lem}\label{lem:UnifDist}
Let $I \subseteq [0,1]$ be an open interval
%\footnote{By an \emph{arc} in $S^1$ we mean the image of an interval $J \subseteq [0,1]$ under the map $t \mapsto e(t)$. Its \emph{length} is then the pushforward of Lebesgue measure under this map.} 
with length $|I|$. If $\chi$ is a primitive character modulo prime $q$ of order $d$ then for any $K \geq 1$,
$$
|\{n < q : \text{arg}(\chi(n)) \in I\}| = q\left(|I| + O\left(\frac{1}{K} + \frac{\log(1+\llf K/d\rrf)}{d}\right)\right).
$$
\end{lem}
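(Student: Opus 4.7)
The plan is to invoke a quantitative equidistribution estimate (the Erd\H{o}s-Tur\'{a}n inequality) applied to the sequence $\{\text{arg}(\chi(n))\}_{n=1}^{q-1} \subseteq \mb{R}/\mb{Z}$, and then to exploit orthogonality of Dirichlet characters to compute the associated exponential sums exactly.

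Concretely, recall that for any sequence $(a_n)_{n=1}^N \subseteq \mb{R}/\mb{Z}$, any interval $I \subseteq [0,1)$, and any $K \geq 1$, the Erd\H{o}s-Tur\'{a}n inequality yields
$$
\Big|\#\{1 \leq n \leq N : a_n \in I\} - N|I|\Big| \ll \frac{N}{K} + \sum_{k=1}^K \frac{1}{k}\Big|\sum_{n=1}^N e(k a_n)\Big|.
$$
I would apply this with $a_n = \text{arg}(\chi(n))$ (reduced modulo $1$) and $N = q-1$; the identity $e(k\,\text{arg}(\chi(n))) = \chi(n)^k$, valid since $\chi(n) \neq 0$ for $1 \leq n \leq q-1$ as $q$ is prime, turns the inner sum into the character sum $\sum_{n=1}^{q-1}\chi^k(n)$.

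Since $\chi$ has exact order $d$, the power $\chi^k$ is principal precisely when $d \mid k$, in which case $\sum_{n=1}^{q-1}\chi^k(n) = q-1$; otherwise orthogonality forces the sum to vanish. The Erd\H{o}s-Tur\'{a}n bound thereby collapses to
$$
\sum_{k=1}^K \frac{1}{k}\Big|\sum_{n=1}^{q-1}\chi^k(n)\Big| = (q-1)\sum_{\ss{1 \leq k \leq K \\ d \mid k}} \frac{1}{k} = \frac{q-1}{d}\sum_{m=1}^{\llf K/d\rrf}\frac{1}{m} \ll \frac{q\log(1+\llf K/d\rrf)}{d},
$$
with the convention $\log 1 = 0$ covering the case $K < d$ (where the sum is empty). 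Combining this with the $O(q/K)$ main term, and absorbing the $O(1)$ difference between $(q-1)|I|$ and $q|I|$ into the stated error, yields the lemma.

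There is really no essential obstacle: the statement is a more-or-less immediate consequence of Erd\H{o}s-Tur\'{a}n once the exponential sums are recognized as character sums and orthogonality is applied. A fully self-contained alternative would replace the black-box step by constructing Beurling-Selberg trigonometric majorants and minorants of $\mathbf{1}_I$ of degree $K$ and expanding in a Fourier series, which recovers the same bound with explicit constants.
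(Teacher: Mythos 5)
Your proof is correct and follows essentially the same route as the paper: both apply the Erd\H{o}s-Tur\'{a}n inequality to $\text{arg}(\chi(n))$, invoke orthogonality to show $\sum_{n < q}\chi^k(n)$ vanishes unless $d \mid k$, and then sum the harmonic series over multiples of $d$ up to $K$. The extra remark about Beurling-Selberg majorants is a reasonable alternative but not needed.
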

\begin{proof}
We apply the Erd\H{o}s-Tur\'{a}n inequality \cite[Thm. I.6.15]{Ten}. Given $K \geq 1$, 
%the difference between the LHS and the first term on the RHS in the statement is
$$
| \, |\{n < q : \text{arg}(\chi(n)) \in I\}| - q|I| \, | \ll \frac{q}{K} + \sum_{1 \leq k \leq K} \frac{1}{k}\left|\sum_{n \leq q} \chi(n)^k\right|.
$$
If $d|k$ then $\chi(n)^k$ is principal and the inner sum is $q-1$. Otherwise, if $d \nmid k$ then $\chi^k$ is non-principal and the sum is zero by orthogonality. This yields the upper bound
$$
\ll \frac{q}{K} + q\sum_{1 \leq k \leq K} \frac{1_{d|k}}{k} \ll q\left(\frac{1}{K} + \frac{\log(1+\llf K/d\rrf)}{d}\right),
$$
and implies the claim.
\end{proof}

\begin{proof}[Proof of Proposition \ref{prop:largeArg}]
%If $M > d$ then the result follows from Proposition \ref{prop:nonOne}. Thus, in the sequel we need only consider $M \leq d$. 
Let $1 \leq M \leq d$ be a parameter to be chosen later. Assume for the sake of contradiction that $|\text{arg}(\chi(n))| \leq \frac{1}{M}$ for all $n \leq q^{\delta}$. It follows that whenever $n = mk < q$, where $m \leq q^{\delta/10}$ and $q^{\delta/10} < P^-(k) \leq P^+(k) \leq q^{\delta}$,
$$
|\text{arg}(\chi(mk))| \leq |\text{arg}(\chi(m))| + \sum_{p|k}|\text{arg}(\chi(p))| \leq \frac{1}{M} + \frac{10}{\delta} \max_{\ss{p|k \\ p \leq q^{\delta}}} |\text{arg}(\chi(q))| \leq \frac{11}{M\delta}.
$$
Since $M \leq d$, applying Lemma \ref{lem:UnifDist} with $K = \llf M\rrf -1 < d$ gives
$$
|\mc{S}_\delta| = |\{n = mk < q : m \leq q^{\delta/10}, \, p|k \Rightarrow q^{\delta/10} < p \leq q\}| \leq |\{n < q : |\text{arg}(\chi(n))| \leq \tfrac{11}{M\delta}\}| \ll \frac{q}{M\delta}.
$$
On the other hand, Corollary \ref{cor:prodNum} directly implies that 
$$
|\mc{S}_\delta| \gg q\rho(1/\delta) c_0^{-1/\delta}.
$$
If $\delta$ is sufficiently small and $C > 0$ is a large enough absolute constant then we obtain a contradiction with
$$
M := \min\{d,C (\delta\rho(1/\delta))^{-1} c_0^{1/\delta}\}.
$$ 
%we obtain a contradiction. 
Thus, there must exist $n \leq q^\delta$ with $|\text{arg}(\chi(n))| > 1/M$. Since for any $c > 1$ and small enough $\delta$ the bound $\rho(1/\delta)^{c} \ll \delta \rho(1/\delta) c_0^{-1/\delta}$ holds, we get $1/M \gg \max\{1/d,\rho(1/\delta)^c\}$ and the claim follows.
\end{proof}
%\begin{rem}
%Apart from the range $M > d$ where we relied on Corollary \ref{cor:nonOne}, we can eschew the condition that $\chi$ have prime modulus above.
%\end{rem}
%

\section{Background and Proof Strategy} \label{sec:ShortSums}
\subsection{A Pretentious Primer} \label{sec:pret}
The arguments used towards the proof of our main theorems are grounded in notions of pretentious number theory, as developed by Granville and Soundararajan. Here, we give a brief overview of those ideas from that subject that will be relevant in this paper. \\
Let $\mb{U} := \{z \in \mb{C} : |z| \leq 1\}$. Given arithmetic functions $f,g: \mb{N} \ra \mb{U}$ and $x \geq 2$ we define the \emph{pretentious distance} between $f$ and $g$ (at scale $x$) as 
$$
\mb{D}(f,g;x) := \left(\sum_{p \leq x} \frac{1-\text{Re}(f(p)\bar{g}(p))}{p}\right)^{1/2}.
$$
Note that $\mb{D}(f,g;x) = \mb{D}(f\bar{g},1;x)$, and by Mertens' theorem, $0 \leq \mb{D}(f,g;x)^2 \leq 2\log\log x$. This distance function also satisfies a triangle inequality: given $f,g,h: \mb{N} \ra \mb{U}$ we have
\begin{equation}\label{eq:pretTri}
\mb{D}(f,h;x) \leq \mb{D}(f,g;x) + \mb{D}(g,h;x),
\end{equation}
which implies the useful inequality (see \cite[Lem. 3.1]{GSPret})
\begin{equation}\label{eq:pretTriProd}
\mb{D}(f_1f_2,g_1g_2;x) = \mb{D}(f_1\bar{g}_1, f_2\bar{g}_2;x) \leq \mb{D}(f_1\bar{g}_1,1;x) + \mb{D}(f_2\bar{g}_2,1;x) = \mb{D}(f_1,g_1;x) + \mb{D}(f_2,g_2;x).
\end{equation}
If $f$ and $g$ are multiplicative functions for which $\mb{D}(f,g;x)^2 = o(\log\log x)$ then $f(p) \approx g(p)$ for most $p$ (in a suitable average sense), and we think of $f$ and $g$ as approximating one another. In the particular case that $\mb{D}(f,g;x)$ is \emph{bounded} as a function of $x$ we say
% in this situation 
that $f$ is \emph{$g$-pretentious} (or, symmetrically, that $g$ is \emph{$f$-pretentious}). \\
%Our key interest in working with 
The pretentious distance can be used to express upper bounds for C\'{e}saro averages of bounded multiplicative functions. The Hal\'{a}sz-Montgomery-Tenenbaum inequality \cite[Cor. III.4.12]{Ten}, a quantitative refinement of fundamental work of Hal\'{a}sz, states that for a multiplicative function $f: \mb{N} \ra \mb{U}$ and parameters $x \geq 3$ and $T \geq 1$,
\begin{equation}\label{eq:HMT}
\frac{1}{x}\left|\sum_{n \leq x} f(n)\right| \ll (M+1) e^{-M} + \frac{1}{T} + \frac{\log\log x}{\log x},
\end{equation}
where $M := \min_{|t| \leq T} \mb{D}(f,n^{it};x)^2$. Thus, if $f$ is not $n^{it}$-pretentious for all $|t| \leq T$, and $T$ is large enough, then the partial sums of $f$ are small. This result will be used several times in the sequel. 
%A (partial) converse also holds, in that if $M$ is bounded from above uniformly in $x$ (and $f(2^k) \neq -1$ for some $k \geq 1$) then the partial sums of $x$ are bounded away from zero and may be estimated asymptotically. 

\subsection{Deductions of Corollary \ref{cor:levelSet} and Theorem \ref{thm:levelSet}} \label{subsec:Deducts} 
We show in this section that our main results on level sets, Theorem \ref{thm:levelSet} and Corollary \ref{cor:levelSet}, are consequences of our general Theorems \ref{thm:HilApp} and \ref{thm:meanSqShortSumsParams}. \\
Given a multiplicative function $f : \mb{N} \ra S^1 \cup \{0\}$ and $\alpha \in S^1$ define the level set
$$
\mc{A}_\alpha(x;f) := \{n \leq x : f(n) = \alpha\}, \quad x \geq 1.
$$
\begin{cor}\label{cor:fixedChinCes}
Assume the hypotheses and notation of Theorem \ref{thm:meanSqShortSumsParams}. Let $\{\alpha_j\}_{1 \leq j \leq d}$ be an ordering of $\mu_d$ so that 
%$
%\sum_{n \in \mc{A}_{\alpha_1}(x)} \frac{1}{n}
%$$
\begin{align*}
|A_{\alpha_1}(x;f)| &= \max_{\alpha \in \mu_d} |\mc{A}_\alpha(x;f)|, \\
|\mc{A}_{\alpha_j}(x;f)| &= \max_{\ss{\alpha \in \mu_d \\ \alpha \notin \{\alpha_1,\ldots,\alpha_{j-1}\}}} |\mc{A}_{\alpha}(x;f)|, \quad j \geq 2. 
\end{align*}
Then for any $J \geq 1$, 
$$
|\mc{A}_{\alpha_J}(x;f)| \ll \frac{x}{\sqrt{J}}\frac{\log \Sigma}{\Sigma^{1/15}}.
$$
In particular,
$$
\max_{\alpha \in \mu_d} |\{n \leq x : f(n) = \alpha\}| \ll x \frac{\log \Sigma}{\Sigma^{1/15}}.
$$
\end{cor}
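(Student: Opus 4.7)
The plan is to recognize the corollary as a Parseval plus pigeonhole argument applied to the level-set counting function $\alpha \mapsto N_\alpha := |\mc{A}_\alpha(x;f)|$ on the finite group $\mu_d$. First I would note that, since $f$ takes values in $\mu_d \cup \{0\}$ and is multiplicative, for each $1 \leq \ell \leq d-1$ we have the exact identity
$$
\sum_{n \leq x} f^\ell(n) \;=\; \sum_{\alpha \in \mu_d} \alpha^\ell N_\alpha,
$$
while for $\ell = 0$ we have $\sum_{n\leq x} f^0(n) = \lfloor x\rfloor$ and $\sum_{\alpha \in \mu_d} N_\alpha = \lfloor x\rfloor - Z$, where $Z := |\{n \leq x : f(n) = 0\}|$. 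Applying Parseval's identity on $\mu_d$ and using $\lfloor x\rfloor^2 \geq (\lfloor x\rfloor - Z)^2$ to control the $\ell = 0$ term, one obtains
$$
d\sum_{\alpha \in \mu_d} N_\alpha^2 \;=\; \sum_{\ell=0}^{d-1}\biggl|\sum_{\alpha \in \mu_d}\alpha^\ell N_\alpha\biggr|^2 \;\leq\; \sum_{\ell=0}^{d-1}\biggl|\sum_{n \leq x} f^\ell(n)\biggr|^2.
$$
Feeding the mean-square bound from Theorem \ref{thm:meanSqShortSumsParams} into the right-hand side then yields
$$
\sum_{\alpha \in \mu_d} N_\alpha^2 \;\ll\; x^2 \frac{(\log \Sigma)^2}{\Sigma^{1/11}}.
$$

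Having reduced the corollary to this $\ell^2$-estimate, the remaining step is a pigeonhole: since the ordering is chosen so that $N_{\alpha_1} \geq N_{\alpha_2} \geq \cdots \geq N_{\alpha_d} \geq 0$, one has
$$
J \cdot N_{\alpha_J}^2 \;\leq\; \sum_{j=1}^J N_{\alpha_j}^2 \;\leq\; \sum_{\alpha \in \mu_d} N_\alpha^2,
$$
and taking square roots gives $N_{\alpha_J} \ll (x/\sqrt{J})(\log \Sigma)/\Sigma^{1/22}$, from which the $J = 1$ case recovers the second displayed bound. There is essentially no obstacle here beyond Theorem \ref{thm:meanSqShortSumsParams} itself; the only care required is in observing that the zeros of $f$ contribute nonnegatively when passing between the partial sums of $f^\ell$ and the Fourier coefficients of $\alpha \mapsto N_\alpha$, so that the mean-square hypothesis can be applied to the $\mu_d$-Fourier side without any loss.
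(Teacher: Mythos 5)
Your proof is correct and is essentially the same as the paper's: both reduce to the observation that $\sum_{\alpha\in\mu_d} N_\alpha^2 = |\{n,m\le x: f(n)=f(m)\neq 0\}|$ is bounded by $\frac{1}{d}\sum_{0\le \ell\le d-1}\bigl|\sum_{n\le x} f^\ell(n)\bigr|^2$, whether one phrases this via Parseval on $\mu_d$ (as you do) or via orthogonality of additive characters applied to the double sum over $(n,m)$ (as the paper does), and then both apply the same pigeonhole step $J N_{\alpha_J}^2 \le \sum_\alpha N_\alpha^2$. The only stylistic distinction is that you track the $\ell=0$ term explicitly through the $Z$ zeros of $f$ rather than absorbing it into the two-variable orthogonality computation, which is a minor presentational difference and arguably slightly cleaner.
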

%\begin{rem}
%As noted in the remark following Lemma \ref{lem:HilApp}, we may obtain $\Sigma \geq \log(1/\eta)$ at scale $x = q$ with $\eta \asymp \frac{\log\log d}{\log d}$. Replacing $q$ by $q^{\eta^c}$ for $c = 0.1$, say, the corollary says that if $d$ is a sufficiently large integer then we get, for instance, that
%$$
%|\{n \leq x : \chi(n) = 1\}| \leq \e x,
%$$
%for $\e = \e(d) \ra 0$ as $d \ra \infty$, and this is non-trivial.
%\end{rem}
\begin{proof}[Proof of Corollary \ref{cor:fixedChinCes} assuming Theorem \ref{thm:meanSqShortSumsParams}]
%[Proof of Corollary \ref{cor:fixedChinCes} assuming Theorem \ref{thm:ctrlChiValCes}]
The second claim follows from the first with $J = 1$ so it suffices to prove the first. \\
By orthogonality 
%of additive characters 
modulo $d$,
$$
\frac{1}{d}\sum_{0 \leq \ell \leq d-1} \left|\sum_{n \leq x} f^\ell(n)\right|^2 = \sum_{n,m \leq x} \frac{1}{d}\sum_{0 \leq \ell \leq d-1} (f(n)\bar{f}(m))^{\ell} = |\{n,m \leq x : f(n) = f(m) \neq 0\}|.
$$
Next, we note that by positivity,
\begin{align*}
|\mc{A}_{\alpha_J}(x;f)|^2 &\leq \frac{1}{J} \sum_{1 \leq j \leq J} |\mc{A}_{\alpha_j}(x;f)|^2 = \frac{1}{J}\sum_{\ss{n,m \leq x \\ f(n) = f(m) \\ f(n) \in \{\alpha_1,\ldots,\alpha_J\}}} 1 \\
&\leq \frac{1}{J} |\{n,m \leq x : f(n) = f(m) \neq 0\}|.
\end{align*}
Combining this with the previous equation and applying Theorem \ref{thm:meanSqShortSumsParams}, we obtain
$$
|\mc{A}_{\alpha_J} (x;f)| \ll \frac{1}{\sqrt{J}} \left(\frac{1}{d}\sum_{0 \leq \ell \leq d-1} \left|\sum_{n \leq x} f^\ell(n)\right|^2 \right)^{1/2} \ll \frac{x}{\sqrt{J}} \frac{\log \Sigma}{\Sigma^{1/15}},
$$
%and the claim follows provided $\Sigma$ is large enough.
as claimed.
\end{proof}
\begin{proof}[Proof of Corollary \ref{cor:levelSet} assuming Theorems \ref{thm:HilApp} and \ref{thm:meanSqShortSumsParams}]
We may assume that $\eta \in (0,1)$ is smaller than any fixed constant, since otherwise the claim is trivial. Set now $z := \log(1/\eta)$ and factor $d = \mf{d} \mf{D}$, where
$$
\mf{d} := \prod_{\ss{p^k||d \\ p \leq z}} p^k, \quad \mf{D} := \prod_{\ss{p^k||d \\ p > z}} p^k.
$$
Define $\psi := \chi^\mf{d}$. Then $\psi$ has order $\mf{D}$, which satisfies $P^-(\mf{D}) > z$. 
Since $d$ is squarefree, by the prime number theorem we have $\mf{d} \leq e^{(1+o(1)) z} \leq \eta^{-2}$, and so by assumption,
$$
\mf{D} = d/\mf{d} \geq C_4 \eta^2 \rho\left(\frac{C_5}{\eta \delta}\right).
$$
Increasing $C_4,C_5$ if needed, we have $\mf{D} \geq C_1\rho\left(\frac{C_2}{\eta \delta'}\right)$, where $C_1,C_2$ are as in Theorem \ref{thm:HilApp}, and $\delta' := 2\delta/3$. \\
Now let $x > q^{\delta}$. Since $\psi \in \mc{M}_1(q^\theta;d)$ for $\theta > 3/2$ and prime $q$, taking any $0 < c_4 < c_3$ and applying Theorem \ref{thm:HilApp} gives
$$
\Sigma = \min\Big\{P^-(\mf{D}),2+\sum_{\ss{p \leq x \\ \psi(p) \neq 0,1}} p^{-1}\Big\} \geq \log(1/\eta).
$$
%whenever $x > q^{\delta}$. 
By Corollary \ref{cor:fixedChinCes}, if $\eta$ is sufficiently small then
$$
\max_{\alpha^{\mf{D}} = 1} |\mc{A}_{\alpha}(x;\psi)| \ll x\frac{\log \Sigma}{\Sigma^{1/15}} \ll \frac{x}{(\log(1/\eta))^{1/16}}.
$$
Since $\psi(n) = e(j/\mf{D})$ whenever $\chi(n) = e(j/d)$, we deduce that
$$
\max_{\beta^d = 1} |\mc{A}_{\beta}(x;\chi)| \leq \max_{\alpha^{\mf{D}} = 1} \sum_{\ss{\beta^d = 1: \\ \beta^\mf{d} = \alpha}} |\mc{A}_{\beta}(x;\chi)| = \max_{\alpha^{\mf{D}} = 1}  |\mc{A}_{\alpha}(x;\psi)| \ll \frac{x}{(\log(1/\eta))^{1/16}},
$$
as claimed.
%
%
%and when $d \geq C'\rho(4/(\eta\delta))$ we obtain $P^-(d) > \log(4/\eta\delta) \geq \log(1/\eta)$ and hence
%$$
%\Sigma \geq \min\{P^-(d), \sum_{j \neq 0} \sg_j(x)\} \geq \min\{\log(1/\eta), \sum_{\ss{p \leq x \\ \chi(p) \neq 1}} p^{-1}\} \geq \log(1/\eta)
%$$
%whenever $x > q^{\delta}$. These two estimates imply the claim.
\end{proof} 
\begin{proof}[Proof of Theorem \ref{thm:levelSet} assuming Theorems \ref{thm:HilApp} and \ref{thm:meanSqShortSumsParams}] 
%Note that when $q$ is prime we have $\chi \in \mc{M}_1(q;d)$. 
Set $\eta = \delta$ and take $0 < c_1\leq c_4/2$ small, so that $\eta \delta \geq (\log q)^{-c_4}$ whenever $\delta \geq (\log q)^{-c_1}$. Using \eqref{eq:DickmanLB} and the hypothesis
$$
\frac{1}{\delta^2} \ll c_1^2 \frac{\log d}{\log\log(ed)},
$$
choosing $c_1$ smaller if needed we also have the required lower bound 
$$
d \geq C_4\rho(\tfrac{C_5}{\eta\delta})^{-1},
$$ 
with $C_4,C_5$ as in the statement of Corollary \ref{cor:levelSet}. 
%whenever $c_1 > 0$ is chosen small enough. 
Moreover, since $q > d$,
% it follows that
$$
\log(1/\eta) = \frac{1}{2}\min\Big\{c_1\log\log q, \log\log d - \log\log\log(ed)  + O(1) \Big\} \gg \log\log d.
% \quad \delta^2 \gg \frac{\log\log d}{\log d}.
$$
Thus, when $x > q^{\delta}$ Theorem \ref{thm:levelSet} follows from Corollary \ref{cor:levelSet}.
% thus implies the claim as long as $x > q^{\delta}$, as required.
\end{proof} 

\subsection{Strategy of Proof of Theorem \ref{thm:meanSqShortSumsParams}} \label{subsec:ProofStrat}
Let $d \geq 2$, $x_0 \geq 3$ and $f \in \mc{M}(x_0;d)$. Let also $1 \leq x \leq x_0$. In Section \ref{sec:Thm12proof} we will prove Theorem \ref{thm:meanSqShortSumsParams}. Since its proof is somewhat involved, we will explain here our strategy towards its proof.
\subsubsection{Initial Setup} 
To prove Theorem \ref{thm:meanSqShortSumsParams} we will show that for a judicious choice of $\e = \e(d) > 0$,
\begin{equation}\label{eq:epsGoal}
\frac{1}{d}\sum_{0 \leq \ell \leq d-1} \left|\sum_{n \leq x} f^{\ell}(n)\right|^2 \ll \e^2 x^2.
\end{equation}
We will eventually show that we may take $\e \ll (\log \Sigma)/\Sigma^{1/15}$, from which the theorem follows. \\
In this direction, define
%given $\e  > 0$ small, define
$$
\mc{C}_d(\e) := \Big\{1 \leq \ell \leq d-1 : \frac{1}{x} \Big|\sum_{n \leq x} f^\ell(n)\Big| \geq \e \Big\}.
$$
Since $|f| \leq 1$ it is immediately clear that
$$
\frac{1}{d}\sum_{0 \leq \ell \leq d-1} \left|\sum_{n \leq x} f^{\ell}(n)\right|^2 \leq \frac{1}{d} \sum_{\ss{1 \leq \ell \leq d-1 \\ \ell \notin \mc{C}_d(\e)}} \left|\sum_{n \leq x} f^{\ell}(n)\right|^2 + \frac{1}{d} \sum_{\ss{1 \leq \ell \leq d-1 \\ \ell \notin \mc{C}_d(\e)}} \left|\sum_{n \leq x} f^{\ell}(n)\right|^2 + \frac{x^2}{d} \leq (\e^2 + 1/d) x^2 + x^2\frac{|\mc{C}_d(\e)|}{d}. 
$$
We may suppose that $d > \e^{-2}$. Thus, if $|\mc{C}_d(\e)| \leq \e^2 d$ then \eqref{eq:epsGoal} is verified, and so our task is reduced to understanding the case $|\mc{C}_d(\e)| > \e^2 d$. \\
%If, as $q \ra \infty$ we have $|\mc{C}_{d(\e)| \leq \e^2 d$ for any $\e > 0$ then the proof is 
It turns out that this lower bound on $|\mc{C}_d(\e)|$ puts rigid constraints on $f$. Consequently, we show that for \emph{almost all} $\ell \in \mc{C}_d(\e)$ we still obtain \emph{some} cancellation in the partial sums of $f^{\ell}$, more precisely
$$
\frac{1}{x} \left|\sum_{n \leq x} f^{\ell}(n)\right| = o_{\e \ra 0}(x) \text{ for all but } o_{\e \ra 0}(d) \text{ values } \ell \in \mc{C}_{d}(\e).
$$
%by showing that for each $\e > 0$ the partial sums of $f^\ell$ up to $x$ are $o_{\e \ra 0^+}(x)$ for all but $o_{\e \ra 0^+}(d)$ powers $1 \leq \ell \leq d-1$. 
For this to be the case, according to \eqref{eq:HMT}
%Hal\'{a}sz' theorem (see in particular \eqref{eq:HMT} below),
it would be sufficient to show that the minimal distances
%the \emph{minimal distance}
\begin{equation}\label{eq:minDistsetup}
\mb{D}(f^\ell, n^{it_\ell};x) = \min_{|t| = O(\e^{-2})} \mb{D}(f^\ell, n^{it};x) 
%\ra \infty \text{ as } \e \ra 0^+,
\end{equation}
grow as a function of $1/\e$ for all but $o_{\e \ra 0}(d)$ powers $\ell$. 
%for all but $o_{\e \ra 0}(d)$ powers $\ell$. 
We endeavour to verify this type of condition in the sequel.
%This is achi\\
%For $\e  > 0$ small, define
%$$
%\mc{C}_d(\e) := \Big\{1 \leq \ell \leq d-1 : \frac{1}{x} \Big|\sum_{n \leq x} f^\ell(n)\Big| \geq \e\Big\}.
%$$
%Note that $\ell \in \mc{C}_d(\e)$ if, and only if $-\ell \in \mc{C}_d(\e)$. \\
\subsubsection{Proving the theorem assuming $t_{\ell} \equiv 0$} 
Our task turns out to be significantly simplified if we can show, roughly speaking, that $t_{\ell}$ may be replaced by $0$, or more precisely
\begin{equation}\label{eq:redtotlzero}
\mb{D}(f^{\ell},n^{it_{\ell}};x) = \mb{D}(f^{\ell},1;x) + O(1) \text{ for all $0 \leq \ell \leq d-1$.}
\end{equation}
Let us assume this is the case for the moment. Then we may bound the partial sums of $f^{\ell}$ in terms of the \emph{level sets}
%The proof of Theorem \ref{thm:meanSqShortSumsParams} relies on some understanding of the relative sizes of the prime sums 
$$
\sg_j = \sg_j(x) := \sum_{\ss{p \leq x \\ f(p) = e(j/d)}} \frac{1}{p}, \quad 1 \leq j \leq d-1,
$$
by decomposing
\begin{equation}\label{eq:Dfldecomp}
\mb{D}(f^\ell,1;x)^2 = \sum_{0 \leq j \leq d-1} \sum_{\ss{p \leq x \\ f(p) = e(j/d)}} \frac{1-\cos(2\pi j\ell/d) }{p} = \sum_{1 \leq j \leq d-1} (1-\cos(2\pi j\ell/d)) \sg_j.
\end{equation}
Note that while we know nothing about the sizes of the individual $\sg_j$, we do know that their \emph{sum} satisfies
$$
S_f(x) := \sum_{1 \leq j \leq d-1} \sg_j = \sum_{\ss{p \leq x \\ f(p) \neq 0,1}} \frac{1}{p} \geq \Sigma.
$$
We heuristically expect that the (non-zero) prime values $f(p)$ with $p \leq x$ are \emph{uniformly distributed} in $\mu_d$, so that each $\sg_j$ should be of roughly the same size $\sg_j \approx S_f(x)/d$. In particular, $\sg_j$ should be \emph{small} relative to $S_f(x)$ as $d \ra \infty$ for every $j$. However, we do not know that this is the case in practice. As is reflected in the bound in Theorem \ref{thm:meanSqShortSumsParams}, we instead seek lower bounds for $\mb{D}(f^\ell,1;x)^2$ in terms of $S_f(x)$, at least for \emph{most} $0 \leq \ell \leq d-1$. \\
Using a simple Fourier analytic argument we are able to show (Lemma \ref{lem:SmallSigCase}) that if the prime factors of $d$ are all large in terms of $\e$, and \emph{each} $\sg_j$ satisfies $\sg_j < \e S_f(x)$ then for most $1 \leq \ell \leq d-1$,
$$
\sum_{1 \leq j \leq d-1} \|\ell j/d\|^2 \sg_j = \left(\int_0^1 \|t\|^2 dt + o_{\e \ra 0}(1) \right) S_f(x) = (1/12 + o_{\e \ra 0}(1))S_f(x).
$$
Thus using $1-\cos (2\pi x) \geq 8 \|x\|^2$ in \eqref{eq:Dfldecomp}, for most $\ell$ we obtain
$$
\mb{D}(f^{\ell},1;x)^2 \geq (2/3 + o_{\e \ra 0}(1)) S_f(x).
$$
%see Lemma \ref{lem:SmallSigCase} for the details. 
As $S_f(x) \geq \Sigma$, \eqref{eq:HMT} then yields an estimate of the shape
$$
\frac{1}{x}\left|\sum_{n \leq x} f^{\ell}(n)\right| \ll \e^2 + \Sigma e^{-c \Sigma},
$$
for some $c > 0$. This bound is more than sufficient. \\
The possibility remains that \emph{some} $\sg_{j_0}$ is large in the sense that $\sg_{j_0} \geq \e S_f(x)$. We show in this case (see Proposition \ref{prop:TKAppCes}) that if $|\mc{C}_d(\e)| > \e^2 d$ and $\ell \in \mc{C}_d(\e)$ then as long as $\ell j_0/d \pmod{1}$ is $\gg_{\e} 1$, thus bounded away from zero, we still obtain
\begin{equation}\label{eq:smallfellSum}
\frac{1}{x}\left|\sum_{n \leq x} f^{\ell}(n)\right| = o_{\e \ra 0}(1).
\end{equation}
To prove this we use the Tur\'{a}n-Kubilius inequality \cite[Thm. III.3.1]{Ten} and the complete multiplicativity of $f$ to obtain a decomposition
$$
\frac{1}{x}\sum_{n \leq x} f^{\ell}(n) \approx \frac{1}{\sg_{j_0}x}\sum_{\ss{mp \leq x \\ f(p) = e(j_0/d)}} f^{\ell}(pm) = \frac{e(\ell j_0/d)}{\sg_{j_0}} \sum_{\ss{p \leq x \\ f(p) = e(j_0/d)}} \frac{1}{p} \cdot \frac{p}{x}\sum_{m \leq x/p} f^\ell(m).
$$
Since the normalized partial sums $y \mapsto y^{-1} \sum_{n \leq y} g(n)$ of a multiplicative function $g$ are known to be slowly-varying\footnote{This is an oversimplification; in order to apply the appropriate Lipschitz estimates we must first twist $f^{\ell}$ by a suitable character $n^{iy_{\ell}}$; luckily, we may show that $|y_{\ell}|$, like $|t_{\ell}|$, is small and therefore negligible in the arguments.}  with $y$, we show roughly speaking that
$$
\frac{p}{x}\sum_{m \leq x/p} f^{\ell}(m) \approx \frac{1}{x}\sum_{m \leq x} f^{\ell}(m)
$$
uniformly in the range $p \leq x^{o(1)}$. Combined with the decomposition above, this leads to an estimate of the shape
$$
\frac{1}{x}\sum_{n \leq x} f^{\ell}(n) \approx e(\ell j_0/d) \frac{1}{x}\sum_{n \leq x} f^{\ell}(n).
$$
It is not hard to show (e.g. using the Erd\H{o}s-Tur\'{a}n inequality) that $|e(\ell j_0/d)-1| \gg_{\e} 1$ for all but $o_{\e \ra 0}(d)$ values $1 \leq \ell \leq d-1$, which then forces \eqref{eq:smallfellSum} to hold, as required.
\subsubsection{Reducing to the case $t_{\ell} \equiv 0$}
It remains to show that \eqref{eq:redtotlzero} holds, and so effectively $t_{\ell} = 0$ for all $\ell$. What we actually prove (see Proposition \ref{prop:approxHom}) is that $|t_{\ell}| \ll_{\e} \frac{1}{\log x}$ uniformly in $\ell$. \\
%
%The proof of Theorem \ref{thm:meanSqShortSumsParams} requires an analysis of the set $\mc{C}_d(\e)$ for an appropriate, small choice of $\e$. If $|\mc{C}_d(\e)|$ is small (in a precise sense) then the conclusion of the theorem is trivial; what is of greater interest are the ramifications of assuming that $|\mc{C}_d(\e)|$ is in fact large. \\
%One way to interpret Proposition \ref{prop:approxHom} is that if sufficiently many powers of $f$ have large partial sums at scale $x$ then $f$ must be (essentially) $1$-pretentious at scale $x$. 
%If, say, $1 \in \mc{C}_d(\e)$ then $f$ is $n^{it}$-pretentious, for some $|t| \ll 1/\e^2$, say, by \eqref{eq:HMT}. We make the heuristic claim that if $|\mc{C}_d(\e)|$ is large then $|t|$ must be quite small. \\
To motivate this, suppose for convenience that $1 \in \mc{C}_d(\e)$. Then $f$ is $n^{it_1}$-pretentious for some $|t_1| \ll \e^{-2}$. Now assume instead that $|t_1|$ were bounded away from 0. Then $f(p) \approx p^{it_1}$ for \emph{typical} primes $p$, and (at least for $\ell$ not too large, see Remark \ref{rem:triIneqApp}), $f^{\ell}$ should be $n^{i\ell t_1}$ pretentious and $t_{\ell} \approx \ell t_1$.
%To see this we observe that since $t_{\ell}$ is a minimizer for the distance $t \mapsto \mb{D}(f^\ell, n^{it};x)$, repeatedly applying \eqref{eq:pretTriProd} yields
%$$
%\mb{D}(f,n^{it_\ell};x) \leq \mb{D}(f^{\ell},n^{i \ell t_1};x) \leq \ell \mb{D}(f,n^{it_1};x).
%$$
%and that, roughly speaking, $f^\ell$ is $n^{i\ell t}$-pretentious. \\
Now if $\ell \in \mc{C}_d(\e)$ then $f^\ell$ must have large partial sums as well. On the other hand, it can be shown (see \eqref{eq:twistChiEll} below) that
$$
\left|\sum_{n \leq x} f^\ell(n)\right| \ll \frac{x}{1+|t_{\ell}|} \ll \frac{x}{1+\ell |t_1|}.
$$
Thus, $\ell|t_1|$ cannot be large.
%then forces the partial sums of $f^{\ell}$ to be small for large enough $\ell|t_1|$. 
However, $|\mc{C}_d(\e)|$ contains many large values of $\ell$, thus $|t_1|$ itself must be quite small. \\
Unfortunately this argument is too simplistic, as when $\ell$ is large the powers $(f(p)p^{-it_1})^{\ell}$ may be significantly different from $1$ even if the values $f(p)p^{-it_1}$ typically are not. To make it rigorous we appeal to the theory of sumset arithmetic in additive combinatorics. Using an inverse sumset result due to Freiman \cite{Fre}, we show that if $d$ has no small prime factors then \emph{every} $0 \leq \ell \leq d-1$ has an \emph{efficient representation}
$$
\ell \equiv \ell_1 + \cdots + \ell_m,
$$
where $\ell_j \in \mc{C}_d(\e)$ for each $1 \leq j \leq m$ and $m = O_{\e}(1)$ (see Corollary \ref{cor:CDGen}). Under these conditions, we leverage properties of the pretentious distance to show that
%allows us to show that since $\mb{D}(f^{\ell_j},n^{it_
$$
t_{\ell} = t_{\ell_1} + \cdots + t_{\ell_m} + O_{\e}\left(\frac{1}{\log x}\right),
$$ 
and as a result, that the map $\phi: \mb{Z}/d\mb{Z} \ra \mb{R}$ given by $\phi(\ell) := t_{\ell}$ satisfies the \emph{approximate homomorphism} condition
$$
|t_{\ell_1+\ell_2} - t_{\ell_1} - t_{\ell_2}| \ll_{\e} \frac{1}{\log x}.
$$
By applying a result due to Ruzsa on approximate homomorphisms \cite{Ruz}, we find that there is a \emph{genuine} homomorphism $\psi: \mb{Z}/d\mb{Z} \ra \mb{R}$ such that 
$$
\max_{\ell \in \mb{Z}/d\mb{Z}} |t_{\ell}-\psi(\ell)| \ll_{\e} \frac{1}{\log x}.
$$
%t_{\ell}$ must be within $O_{\e}(1/\log x)$ of $\psi(\ell)$ for some homomorphism . 
Since $\mb{Z}/d\mb{Z}$ is a finite group and $\mb{R}$ is torsion-free, $\psi$ must be identically zero, which leads to $\max_{\ell \in \mb{Z}/d\mb{Z}} |t_{\ell}| \ll_{\e} 1/\log x$, as claimed.

\section{Proof of Theorem \ref{thm:meanSqShortSumsParams}} \label{sec:Thm12proof}
Following the outline provided in Section \ref{subsec:ProofStrat}, we prove Theorem \ref{thm:meanSqShortSumsParams} in this section.
\subsection{The structure of the minimizers $t_{\ell}$} \label{subsec:tlZero}
%The following proposition gives a rigorous formulation of this intuition.
In this subsection we show the following proposition, which bounds the minimizers $t_{\ell}$ uniformly over $\ell$ under the assumptions that $|\mc{C}_d(\e)|$ is large and $d$ has no small prime factors.
\begin{prop}\label{prop:approxHom}
Let $d$ be a positive integer and let $c > 0$ be chosen such that $P^-(d) > 1/c$.
%\footnote{This condition is mainly in place to allow for an application of the Cauchy-Davenport theorem for cyclic groups of prime order. There is an extension of this result due to Kneser that may be applied to general finite abelian groups, and perhaps this could help in extending the applicability of this result.} 
%Let $c > 0$ with $c P^-(d) > 1$, and 
Set $m := \lceil 2/c^2 \rceil$ and for each $1 \leq \ell \leq d-1$ choose $t_\ell = t_{\ell}(\e) \in [-2m/\e^2,2m/\e^2]$ such that
$$
\mb{D}(f^\ell,n^{it_\ell};x) = \min_{|t| \leq (2m)/\e^2} \mb{D}(f^\ell,n^{it};x).
$$ 
If $\e$ is sufficiently small, $|\mc{C}_d(\e)| \geq c d$ and $m^2 \log(1/\e) < \tfrac{1}{32}\log\log x$ then
$$
\max_{1 \leq \ell \leq d-1} |t_\ell| \leq \frac{3\e^{-32m^2}}{\log x}.
$$
\end{prop}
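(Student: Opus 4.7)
The first step is to apply \eqref{eq:HMT} with $T=2m/\e^2$ and $f$ replaced by $f^\ell$ for $\ell\in\mc{C}_d(\e)$. The hypothesis $m^2\log(1/\e)<\tfrac{1}{32}\log\log x$ forces $\log x>\e^{-32m^2}$, which makes both error terms $1/T=\e^2/(2m)$ and $(\log\log x)/\log x$ much smaller than $\e$. Rearranging gives
$$
\mb{D}(f^\ell,n^{it_\ell};x)^2\leq 2\log(1/\e)+O(\log\log(1/\e))\qquad (\ell\in\mc{C}_d(\e)).
$$

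The second step is to propagate this bound via the pretentious triangle inequality \eqref{eq:pretTriProd}. For any $\ell_1,\dots,\ell_k\in\mc{C}_d(\e)$ with $\Lambda\equiv\ell_1+\cdots+\ell_k\pmod d$ and $\mathcal{T}:=t_{\ell_1}+\cdots+t_{\ell_k}$,
$$
\mb{D}(f^\Lambda,n^{i\mathcal{T}};x)\leq k\sqrt{2\log(1/\e)+O(1)}.
$$
Combining this (when $\Lambda\in\mc{C}_d(\e)$) with the first step and the standard lower bound $\mb{D}(n^{i\tau},1;x)^2\geq \log(|\tau|\log x)-O(1)$, valid whenever $|\tau|\log x\geq 1$, yields the quantitative near-additivity
$$
|t_\Lambda - \mathcal{T}|\leq \frac{\e^{-2(k+1)^2+O(1)}}{\log x}.
$$

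The third step brings in additive combinatorics via Corollary \ref{cor:CDGen}. Since $|\mc{C}_d(\e)|\geq cd$ while any proper subgroup of $\Z/d\Z$ has order at most $d/P^-(d)<cd$, the set $\mc{C}_d(\e)$ lies in no proper coset. A Kneser-type result therefore ensures the $m$-fold sumset $\mc{C}_d(\e)+\cdots+\mc{C}_d(\e)$ fills all of $\Z/d\Z$ for $m=\lceil 2/c^2\rceil$. Hence every $\ell\in\Z/d\Z$ admits an $m$-element decomposition into elements of $\mc{C}_d(\e)$, and we may define an averaged slope $\phi(\ell):=t_{\ell_1}+\cdots+t_{\ell_m}$ for any such decomposition.

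Finally, I would show that $\phi$ is well-defined up to errors of size $\e^{-O(m^2)}/\log x$ (by comparing two decompositions, which differ by a $2m$-element collection summing to zero modulo $d$), is approximately additive with the same error order, and satisfies $\phi(0)=0$ (using $\mc{C}_d(\e)=-\mc{C}_d(\e)$ together with the normalization $t_{-\ell}=-t_\ell$, available since $\mb{D}$ is invariant under simultaneous conjugation of arguments). The identity $\phi(d)=\phi(0)=0$ combined with approximate additivity then pins $\phi$ as an approximate homomorphism close to the zero map, so that $|\phi(\ell)|\leq \e^{-O(m^2)}/\log x$ for every $\ell$. The second step with $k=m$ finally delivers $|t_\ell|\leq|\phi(\ell)|+\e^{-2(m+1)^2+O(1)}/\log x\leq 3\e^{-32m^2}/\log x$ once the constants are tracked carefully. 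The main obstacle is this last stage: extracting an \emph{individual} bound on $t_\ell$ from the collective near-additivity requires delicate bookkeeping of the error propagation and a careful exploitation of the interplay between the hypothesis $P^-(d)>1/c$ and the choice $m=\lceil 2/c^2\rceil$.
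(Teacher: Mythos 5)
Your strategy matches the paper's in broad outline: HMT to bound $\mb{D}(f^\ell,n^{it_\ell};x)^2$ for $\ell\in\mc{C}_d(\e)$, the triangle inequality to propagate this along $m$-fold sums, Corollary \ref{cor:CDGen} to cover $\Z/d\Z$, and the Vinogradov--Korobov/Mertens dichotomy to convert small distances $\mb{D}(n^{i\tau},1;x)$ into a bound $|\tau|\ll\e^{-O(m^2)}/\log x$. But the final step, which you yourself flag as the obstacle, is where the gap lies.

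You assert that ``$\phi(d)=\phi(0)=0$ combined with approximate additivity then pins $\phi$ as an approximate homomorphism close to the zero map,'' but this does not follow from what you have written. The paper closes this gap by invoking Ruzsa's quantitative generalization of Hyers' stability theorem \cite[Statement (7.3)]{Ruz}: an approximate homomorphism $\phi:\Z/d\Z\to\R$ with additive defect bounded by $\delta$ lies within distance $O(\delta)$ of a \emph{genuine} group homomorphism $\psi:\Z/d\Z\to\R$, and since $\R$ is torsion-free while $\Z/d\Z$ is finite, $\psi$ must be identically zero, whence $\max_\ell|\phi(\ell)|\ll\delta$. Without appealing to such a stability result (or giving the elementary iterated argument $|\phi(k\ell)-k\phi(\ell)|\leq(k-1)\delta$ followed by taking $k=d$, which you do not spell out), the conclusion that every $|t_\ell|$ is small simply does not come out of ``approximate additivity plus $\phi(0)=0$.'' This is the one substantive missing idea.

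Two smaller issues. First, the paper defines the approximate homomorphism directly as $\phi(\ell):=t_\ell$ (with $t_0:=0$ and $t_{\ell+kd}:=t_\ell$), proving the defect bound $|t_{\ell_1+\ell_2}-t_{\ell_1}-t_{\ell_2}|\leq 3\e^{-32m^2}/\log x$; your proposal defines $\phi(\ell)$ as a sum of $t$'s over an auxiliary decomposition, which requires an extra (and ultimately unnecessary) well-definedness argument. Second, you use a single notation $t_\ell$ throughout, but the paper crucially uses two minimizers: $\tilde{t}_\ell$ minimizing over $|t|\leq 1/\e^2$ (so that sums $\tilde{t}_{r_1}+\cdots+\tilde{t}_{r_\lambda}$ stay in $[-2m/\e^2,2m/\e^2]$), and $t_\ell$ minimizing over the larger range $|t|\leq 2m/\e^2$ (so minimality can be used against the sum). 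If you collapse these two, the minimality comparison in the step ``$\mb{D}(n^{it_\ell},n^{i(\tilde{t}_{r_1}+\cdots+\tilde{t}_{r_\lambda})};x)\leq\mb{D}(f^\ell,n^{it_\ell};x)+\mb{D}(f^\ell,n^{i(\tilde{t}_{r_1}+\cdots)};x)$'' no longer applies because the sum may leave the range over which $t_\ell$ is minimal.
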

\begin{rem} \label{rem:triIneqApp}
If $d$ grows sufficiently slowly then a simpler argument would suffice. 
%we could argue more simply 
By the minimal property of $t_\ell$ and repeated applications of \eqref{eq:pretTriProd},
$$
\mb{D}(1,n^{i(t_\ell-\ell t_1)};x) \leq \mb{D}(f^\ell,n^{it_\ell};x) + \mb{D}(f^\ell,n^{i\ell t_1};x) \leq 2\mb{D}(f^\ell,n^{i\ell t_1};x) \leq 2\ell \mb{D}(f,n^{it_1};x).
$$
If $d = o(\sqrt{\log\log x})$ and $1 \in \mc{C}_d(\e)$ then (using \eqref{eq:HMT}) the right-hand side is $o(\sqrt{\log\log x})$. It can then be shown that 
$$
t_\ell = \ell t_1 + O_{\e}(1/\log x) \text{ for all } 1 \leq \ell \leq d+1. 
$$
Since $t_1 = t_{d+1}$, we deduce that $|t_1| = O_{\e}(1/\log x)$ as a result. \\
% and $dt_1 \approx 0$, and by \eqref{eq:HMT}  
%is small (which is the case if $f$ has large partial sums up to $x$ by \eqref{eq:HMT}). 
The novelty of Proposition \ref{prop:approxHom} is that the same conclusion still holds, even when $d$ is fairly large, provided many of the powers $f^\ell$ have large partial sums.
\end{rem}

%\begin{rem}
%\end{rem}
To prove Proposition \ref{prop:approxHom} we will need the following inverse sumset result, which follows from classical work of Freiman \cite{Fre} in additive combinatorics (see \cite{Wal} for an accessible proof).
\begin{lem} \label{lem:FreiApp}
Let $c > 0$. Let $G$ be a finite Abelian group and let $A\subset G$ be a symmetric subset\footnote{We say that a subset $S$ of a finite Abelian group is \emph{symmetric} whenever $s \in S$ if, and only if, $-s \in S$. Equivalently, $-S := \{-s : s \in S\} = S$.} that satisfies $|A| \geq c|G|$. If $A$ is not contained in a coset of a proper subgroup of $G$ then\footnote{Given a finite abelian group $G$, sets $A,B \subseteq G$ and an integer $r \geq 2$ we define the sumset $A+B$ as
$$
A+B := \{a+b : \, (a,b) \in A\times B\},
$$
and inductively define $rA := A + (r-1)A$.}
 $2^k A = G$ for some $1 \leq k \leq \lceil \frac{\log(1/c)}{\log(3/2)}\rceil$.
\end{lem}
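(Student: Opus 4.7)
The plan is to study the iterated sumsets $A \subseteq 2A \subseteq 4A \subseteq \cdots$ and exploit a dichotomy between steady geometric growth and a Kneser-type collapse to a subgroup. First I would observe that since $A = -A$, every $2^k A$ is symmetric, and for $k \geq 1$ it contains $0$ (writing $0 = a + (-a)$ with $a, -a \in A$); this gives the nesting $2^k A \subseteq 2^{k+1}A$ via $B \subseteq B + B$ whenever $0 \in B$, so the sizes $|2^k A|$ form a non-decreasing sequence in $k \geq 1$.

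Next, at each step I would set $B = 2^k A$ and argue a dichotomy: either $|2B| \geq \tfrac{3}{2}|B|$, or Kneser's theorem yields a non-trivial stabilizer $H = \mathrm{Stab}(2B)$ satisfying $|2B| \geq 2|B+H| - |H| \geq 2|B| - |H|$. Combined with $|2B| < \tfrac{3}{2}|B|$ this forces $|H| > \tfrac{1}{2}|B|$, and since $2B$ is a union of cosets of $H$ the integer $|2B|/|H|$ lies in $\{1,2\}$. If it equals $1$, then $2B = H$ is a subgroup. If it equals $2$, write $2B = H \cup (g+H)$ with $g \notin H$, and decompose $B = B_0 \cup B_1$ with $B_0 = B \cap H$ and $B_1 = B \cap (g+H)$ (necessarily $B_1 \neq \emptyset$, else $|2B| \leq |H|$). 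The inclusion $B_1 + B_1 \subseteq 2B$ forces $2g + H \subseteq H \cup (g+H)$, hence $2g \in H$; consequently $2B$ is again a subgroup of $G$. In either subcase, $A \subseteq B \subseteq 2B$ sits inside a subgroup of $G$, and since $A$ is not contained in any coset of a proper subgroup, this subgroup must equal $G$, so $2^{k+1}A = G$.

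Finally, I would count iterations. If the dichotomy always lands in the growth case, then $|2^k A| \geq (3/2)^k |A| \geq c(3/2)^k |G|$, which forces $(3/2)^k \leq 1/c$ and hence $k \leq \lceil \log(1/c)/\log(3/2)\rceil$ before one must fill $G$. Either growth alone saturates $G$ within this many steps, or the Kneser collapse triggers earlier and yields $2^{k+1}A = G$ with $k+1$ still in the allowed range. The main obstacle will be the careful execution of the $|2B|/|H| = 2$ subcase of Kneser's step, where the symmetry of $B$ together with the inclusion $B_1 + B_1 \subseteq 2B$ are needed to force the relation $2g \in H$ that promotes a union of two cosets to a genuine subgroup; everything else is essentially bookkeeping.
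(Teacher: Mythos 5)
Your approach is genuinely different from the paper's in one respect: the paper treats the Freiman-type dichotomy (symmetric $B$ not contained in a coset of a proper subgroup satisfies either $B+B = G$ or $|B+B| \geq \frac{3}{2}|B|$) as a black box, citing Walker, whereas you re-derive it from Kneser's theorem. The iteration and the counting are then structurally the same. There are, however, two gaps in your execution.

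The decomposition $B = B_0 \cup B_1$ with $B_0 = B \cap H$, $B_1 = B \cap (g+H)$ tacitly uses $B \subseteq 2B$, which you secure only via $0 \in B$, i.e.\ only for $B = 2^k A$ with $k \geq 1$. But your final counting $|2^k A| \geq (3/2)^k |A|$ invokes the dichotomy starting from $B = A$, and $0 \in A$ is not given (for instance $A = \{1,-1\}$ in a large cyclic group has $A \not\subseteq 2A$). To run the base step $A \to 2A$ you need a different device: either use the symmetry of $2A$ directly (since $-2A = 2A$, writing $2A = (g_0+H)\cup(g_1+H)$ forces the two cosets to be exchanged or fixed by negation, which gives $2g_0, 2g_1 \in H$ and hence that $2A$ is a subgroup), or, cleaner still, note Kneser already gives $|B+H| < 2|H|$, so $B$ lies in a single coset of $H$, with no assumption on $0 \in B$. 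Separately, the sentence ``$A \subseteq B \subseteq 2B$ sits inside a subgroup'' is not correct: $A \subseteq 2^k A$ fails for the same reason. The repair is small: by symmetry $A - A = A + A = 2A$, and $2A \subseteq 2^{k+1}A$ for $k \geq 1$ (since $2^{k+1}A = 2A + (2^{k+1}-2)A$ and the even iterated sumset $(2^{k+1}-2)A$ contains $0$), so $A-A \subseteq 2B$, i.e.\ $A$ lies in a single coset of the subgroup $2B$, and the hypothesis on $A$ then forces $2B = G$. Both gaps are patchable and the strategy is sound, but as written the proof does not go through.
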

\begin{proof}
Call $K := 1+\lceil \frac{\log(1/c)}{\log(3/2)}\rceil$. Since $A$ is symmetric, we have $A = -A$. Now, by Freiman's theorem, we see that if $B\subset G$ is symmetric and $B$ is not contained in a coset of a proper subgroup of $G$ then either $B+B = G$ or else $|B+B| \geq \frac{3}{2}|B|$. Applying this iteratively, we find that if $j\geq 1$ and we assume that none of the sets $A,2A,\ldots,2^{j-1}A$ is contained in a coset of a proper subgroup of $G$ then either $2^j A = G$, or else
$$
|G| \geq |2^jA| \geq \frac{3}{2}|2^{j-1}A| \geq \cdots \geq \left(\frac{3}{2}\right)^j |A| \geq \left(\frac{3}{2}\right)^j c|G|.
$$
If $j \geq K$ this is impossible, and so we deduce that $2^j A = G$ for some $1 \leq j \leq K-1$, as long as we can verify that $2^j A$ is not contained in a coset of a proper subgroup of $G$ for any $1 \leq j \leq K-1$. \\
But observe that if $B+B \subseteq b+H$ for some $H < G$ and $b \in G$ then for any $b' \in B$ we have $B \subseteq (b-b') + H$. It follows by induction that if $2^jA$ were contained in a coset of a proper subgroup of $G$ for some $j \geq 1$ then the same is true of $2^i A$ for any $0 \leq i \leq j$. Since, by hypothesis, $A$ is not contained in a coset of a proper subgroup of $G$ we may conclude that none of the iterated sets $2^j A$ are, and the conclusion follows.
\end{proof}

\begin{cor} \label{cor:CDGen}
Let $c > 0$ and let $d \geq 1$ be an integer such that $P^-(d) > 1/c$. If $A \subseteq \mb{Z}/d\mb{Z}$ is symmetric with $|A| \geq cd$ then $2^j A = \mb{Z}/d\mb{Z}$ for some $1 \leq j \leq \lceil \tfrac{\log(1/c)}{\log(3/2)}\rceil$.
\end{cor}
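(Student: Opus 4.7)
The plan is to deduce this directly from Lemma \ref{lem:FreiApp} applied to $G = \mb{Z}/d\mb{Z}$. The only condition that requires verification is that $A$ is not contained in a coset of a proper subgroup of $\mb{Z}/d\mb{Z}$, and this is precisely where the arithmetic hypothesis $P^-(d) > 1/c$ enters.

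First I would recall that every subgroup $H < \mb{Z}/d\mb{Z}$ has the form $H = (d/e)\mb{Z}/d\mb{Z}$ for some divisor $e \mid d$, and $|H| = e$. If $H$ is a \emph{proper} subgroup then $e$ is a proper divisor of $d$, so $e \leq d/P^-(d)$. Consequently any coset $g + H$ has cardinality at most $d/P^-(d)$. Using the assumption $P^-(d) > 1/c$, this gives
$$
|g+H| \leq \frac{d}{P^-(d)} < cd \leq |A|,
$$
so $A$ cannot fit inside any coset of a proper subgroup of $\mb{Z}/d\mb{Z}$.

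With the non-containment condition verified, the hypotheses of Lemma \ref{lem:FreiApp} are fully met: $A$ is symmetric, $|A|\geq c|G|$, and $A$ is not contained in a coset of a proper subgroup. The lemma then yields $2^j A = \mb{Z}/d\mb{Z}$ for some $1 \leq j \leq \lceil \log(1/c)/\log(3/2)\rceil$, which is exactly the conclusion.

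There is essentially no obstacle here beyond the bookkeeping observation about sizes of proper subgroups of cyclic groups; the real work is done by Freiman's theorem underlying Lemma \ref{lem:FreiApp}. The only mild subtlety worth flagging is that the strict inequality $P^-(d) > 1/c$ (rather than $\geq 1/c$) is what makes the bound $d/P^-(d) < cd$ strict, which is needed to rule out equality cases where $A$ might exactly fill a maximal proper subgroup.
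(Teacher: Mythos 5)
Your proof is correct and follows essentially the same route as the paper: both arguments reduce Corollary \ref{cor:CDGen} to verifying the non-containment-in-a-coset hypothesis of Lemma \ref{lem:FreiApp}, and both verify it by observing that any proper subgroup of $\mb{Z}/d\mb{Z}$ has order at most $d/P^-(d) < cd \leq |A|$. The only cosmetic difference is that the paper first dispatches the trivial case $A = \mb{Z}/d\mb{Z}$ separately, whereas your size argument already covers it.
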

\begin{proof}
If $A = \mb{Z}/d\mb{Z}$ then the result is trivial, so we may assume instead that $A$ is a proper subset of $\mb{Z}/d\mb{Z}$. By Lemma \ref{lem:FreiApp}, it suffices to verify that any symmetric subset $A$ in $\mb{Z}/d\mb{Z}$ cannot be a subset of a coset of a proper subgroup of $\mb{Z}/d\mb{Z}$. But if this were the case then $A \subset b + H$ for some $H < G$ and $b \in G$, in which case
$$
c d \leq |A| \leq |b+H| = |H|,
$$
whereas if $H \neq G$ then $|H| = d'$ for some $d'|d$, $d' < d$. It follows that $d' \leq d/P^-(d) < cd$, which is a contradiction. The claim follows.
\end{proof}

\begin{proof}[Proof of Proposition \ref{prop:approxHom}]
%We may assume that $x$ is larger than any fixed constant, otherwise the claim is trivial (with a suitably large implicit constant). 
Write $\mc{C}_d = \mc{C}_d(\e)$. By \eqref{eq:HMT}, for each $\ell \in \mc{C}_d$ we have
$$
\e x \leq \left|\sum_{n \leq x} f^\ell(n)\right| \ll x\left(\mb{D}(f^\ell, n^{i\tilde{t}_\ell};x)^2 e^{-\mb{D}(f^\ell,n^{i\tilde{t}_\ell};x)^2} + \e^2 + \frac{1}{\log x}\right),
$$
where here $\tilde{t}_{\ell} \in [-1/\e^2,1/\e^2]$ is chosen to minimize $\min_{|t| \leq 1/\e^2}\mb{D}(f^\ell,n^{it};x)$ (note that $t_\ell \neq \tilde{t}_\ell$ in general). If $\e$ is small enough (and thus $x$ is large enough), then upon rearranging we deduce that 
\begin{equation} \label{eq:conseqHMT}
\mb{D}(f^\ell,n^{i\tilde{t}_\ell};x)^2 \leq 2\log(1/\e)  \text{ for all } \ell \in \mc{C}_d.
\end{equation}
Now, since $P^-(d) > 1/c$ and $|\mc{C}_d| \geq cd$, by Corollary \ref{cor:CDGen} we know that $\mb{Z}/d\mb{Z}  = J\mc{C}_d$ for some $J \leq 2^{\lceil \frac{\log(1/c)}{\log(3/2)}\rceil} \leq 2c^{-2} \leq m$. Thus, for any $1 \leq \ell \leq d-1$ we have a representation
$$
\ell \equiv r_1 + \cdots + r_{\lambda} \pmod{d},
$$
where $r_j \in \mc{C}_d$ and $1 \leq \lambda \leq m$. But by \eqref{eq:conseqHMT}, \eqref{eq:pretTriProd} and induction,
% we can show that
$$
%\mb{D}(n^{it_\ell},n^{i(t_{r_1}+\ldots + t_{r_m})};x) \leq \mb{D}(\chi^\ell ,n^{it_{\ell}};x) + 
\mb{D}(f^{r_1 + \cdots + r_\lambda}, n^{i(\tilde{t}_{r_1}+\cdots + \tilde{t}_{r_\lambda})};x) \leq \sum_{1 \leq j \leq \lambda} \mb{D}(f^{r_j},n^{i\tilde{t}_{r_j}};x) \leq m \sqrt{2\log(1/\e)}.
$$ 
Since $\left|\sum_{1 \leq j \leq \lambda} \tilde{t}_{r_j}\right| \leq 2m/\e^2$, by the minimality property for $t_\ell \in [-2m/\e^2,2m/\e^2]$ \eqref{eq:pretTri} yields
$$
\mb{D}(n^{it_{\ell}},n^{i(\tilde{t}_{r_1} + \cdots + \tilde{t}_{r_\lambda})};x) \leq \mb{D}(f^\ell,n^{it_{\ell}};x) + \mb{D}(f^{r_1+\cdots + r_\lambda},n^{i(\tilde{t}_{r_1} + \cdots + \tilde{t}_{r_\lambda})};x) \leq 2m\sqrt{2\log(1/\e)}.
$$
By the Vinogradov-Korobov zero-free region for the Riemann zeta function, it follows that if $|t| \geq 100$ then for large $x$,
\begin{equation}\label{eq:big1nit}
\mb{D}(1,n^{it};x)^2 = \sum_{p \leq x} \frac{1-\text{Re}(p^{it})}{p} = \log\log x - \log|\zeta(1+1/\log x + it)| + O(1) \geq \frac{1}{4} \log\log x,
\end{equation}
so with $u_\ell := t_\ell - \tilde{t}_{r_1}-\cdots - \tilde{t}_{r_\lambda}$ this would give the contradiction 
\begin{equation*}
%\label{eq:bigUL}
\frac{1}{4}\log\log x \leq \mb{D}(1,n^{iu_\ell};x)^2 \leq (2m\sqrt{2\log(1/\e)})^2 = 8m^2\log(1/\e),
\end{equation*}
if $|u_\ell| \geq 100$. Thus, we may assume that $|u_\ell| \leq 100$. 
Taking squares and instead using 
\begin{equation}\label{eq:small1nit}
\mb{D}(1,n^{it};x)^2 = \log(1+|t| \log x) + O(1)
\end{equation}
whenever $|t| \leq 100$, say, we thus deduce that
$$
\Big|t_{\ell} - \sum_{1 \leq j \leq \lambda} \tilde{t}_{r_j} \Big| \leq \frac{\e^{-8m^2}}{\log x}.
$$
Now let $1 \leq \ell_1,\ell_2 \leq d-1$. Choose any additive representations
$$
\ell_1 \equiv r_1+\cdots + r_{\lambda_1} \pmod{d}, \quad \ell_2 \equiv s_1 + \cdots + s_{\lambda_2} \pmod{d},
$$
with $\lambda_1,\lambda_2 \leq m$, then by the same argument applied with $\ell \in \{\ell_1,\ell_2, \ell_1+\ell_2\}$ we obtain, uniformly over $\ell_1,\ell_2 \in \mb{Z}/d\mb{Z}$,
\begin{align*}
&|t_{\ell_1 + \ell_2} - t_{\ell_1} - t_{\ell_2}| \\
&\leq |t_{\ell_1+\ell_2} - (\tilde{t}_{r_1} + \cdots + \tilde{t}_{r_{\lambda_1}} + \tilde{t}_{s_1} + \cdots + \tilde{t}_{s_{\lambda_2}})| + |t_{\ell_1} - (\tilde{t}_{r_1} + \cdots + \tilde{t}_{r_{\lambda_1}})| + |t_{\ell_2} - (\tilde{t}_{s_1} + \cdots + \tilde{t}_{s_{\lambda_2}})| \\
&\leq \frac{\e^{-8(2m)^2}}{\log x} + 2 \frac{\e^{-8m^2}}{\log x} \leq 3\frac{\e^{-32m^2}}{\log x}.
\end{align*}
Since we may always select $t_0 := 0$, and as $f^{\ell+kd} = f^\ell$ we can choose $t_{\ell+kd} = t_\ell$ for all $k \in \mb{Z}$, the map $\phi : \mb{Z}/d\mb{Z} \ra \mb{R}$ given by $\phi(m) := t_m$ satisfies the approximate homomorphism condition
$$
|\phi(\ell_1+\ell_2) - \phi(\ell_1) - \phi(\ell_2)| \leq 3 \frac{\e^{-32m^2}}{\log x}, \quad \ell_1,\ell_2 \in \mb{Z}/d\mb{Z}.
$$
By a generalization due to Ruzsa of a result of Hyers \cite[Statement (7.3)]{Ruz}, there is a genuine homomorphism $\psi: \mb{Z}/d\mb{Z} \ra \mb{R}$ such that
$$
\max_{\ell \in \mb{Z}/d\mb{Z}} |\phi(\ell) - \psi(\ell)| \leq 3 \frac{e^{-32m^2}}{\log x}.
$$
But there are no non-trivial homomorphisms from $\mb{Z}/d\mb{Z}$ to $\mb{R}$ since the latter is torsion-free. Hence, $\psi(\ell) = 0$ for all $\ell$ and we deduce that 
$$
\max_{1\leq \ell \leq d-1} |t_{\ell}| \leq \frac{3\e^{-32m^2}}{\log x},
$$
as claimed.
\end{proof}

\subsection{Studying the distances $\mb{D}(f^{\ell},1;x)$ using the level sets of $f(p)$} \label{subsec:tell1arg}
Having shown that the minimizers $|t_{\ell}|$ are uniformly small, we next study the sizes of the distances $\mb{D}(f^{\ell},1;x)$, which control the partial sums of $f^{\ell}$. 
%The proof of Theorem \ref{thm:meanSqShortSumsParams} relies on some understanding of the relative sizes of the prime sums 
We write
$$
\sg_j = \sg_j(x) := \sum_{\ss{p \leq x \\ f(p) = e(j/d)}} \frac{1}{p}, \quad 1 \leq j \leq d-1.
$$
Our analysis now splits into two cases, according to how large each $\sg_j$ is relative to the sum
$$
S_f(x) := \sum_{1 \leq j \leq d-1} \sg_j.
$$
%consider the following cases: 
%\begin{enumerate}
%\item there is an index $1 \leq j_0 \leq d-1$ such that $\sg_{j_0}$ contributes an outsized amount to $\sum_{1 \leq j\leq d-1} \sg_j$;
%\item $\sg_{j_0}$ is small for each $1 \leq j_0 \leq d-1$.
%\end{enumerate}
%We would expect only the second of these cases to occur if the non-zero values of $\{f(p)\}_{p \leq x}$ are (weakly) equidistributed in $\mu_d$, but are not able to rule out either of them. We begin by treating the second case. \\
%of uniformly small $\sg_m$, with the goal of obtaining good upper bounds for the partial sums of $f^\ell$ at scale $x$ for almost all $1 \leq \ell \leq d-1$. 
%As a first step towards estimating the minimal distances \eqref{eq:minDistsetup}, 
\subsubsection{Case 1: each $\sg_j$ is small}
When each $\sg_j$ is small relative to $S_f(x)$ the following lemma provides lower bounds on the distances $\mb{D}(f^\ell,1;x)^2$, for almost all $1 \leq \ell \leq d-1$. 
%In tandem with Proposition \ref{prop:approxHom}, which shows that the minimizing $t_{\ell}$ are of negligible size,
%While this is not enough to obtain the required cancellation in almost all partial sums via \eqref{eq:HMT} due to possible twists $n^{it_\ell}$, in Proposition \ref{prop:approxHom} below we will show that any minimizing $t_\ell$ are of effectively negligible size if, conversely, many of the partial sums are assumed to be large.
%\\
%We first prove the following.
\begin{lem}\label{lem:SmallSigCase}
Let $\e \in (0,1)$ be small and satisfy $\e P^-(d) > 1$, and assume that 
$$
\max_{1 \leq r \leq d-1} \sg_r < \e S_f(x).
$$ 
Then for all but $O(\e^{1/2}\log^2(1/\e) d)$ choices of $1 \leq \ell \leq d-1$ we have
$$
\mb{D}(f^{\ell},1;x)^2 \geq \left(\frac{2}{3} + O(\log(1/\e)^{-1})\right) S_f(x).
$$
\end{lem}
\begin{proof}
Given $t \in \mb{R}$, observe first of all the inequality
$$
1-\cos(2\pi t) = 2\sin^2(\pi \|t\|) \geq 8 \|t\|^2.
$$
It follows that for any $\ell \neq 0$,
\begin{equation}\label{eq:distToFrac}
\mb{D}(f^\ell,1;x)^2 \geq \sum_{\ss{p \leq x \\ f(p) \neq 0,1}} \frac{1-\text{Re}(f^{\ell}(p))}{p} = \sum_{1 \leq j \leq d-1} (1-\cos(2\pi j\ell /d)) \sum_{\ss{p \leq x \\ f(p) = e(j/d)}} \frac{1}{p} \geq 8 \sum_{1 \leq j \leq d-1} \Big\|\frac{\ell j }{d} \Big\|^2 \sg_j.
\end{equation}
We now seek upper bounds for the variance
$$
\Delta := \frac{1}{d} \sum_{1 \leq \ell \leq d} \left(\sum_{1 \leq j \leq d-1} \Big\|\frac{\ell j}{d} \Big\|^2 \sg_j - S_f(x) \int_0^1 \|t\|^2 dt  \right)^2 = \frac{1}{d} \sum_{1 \leq \ell \leq d} \left(\sum_{1 \leq j \leq d-1} \Big\|\frac{\ell j}{d} \Big\|^2 \sg_j - \frac{1}{12}S_f(x) \right)^2.
$$
%noting that here, $\tfrac{1}{12} = \int_0^1 \|t\|^2 dt$. 
The 1-periodic function $t\mapsto \|t\|^2$ has the absolutely convergent Fourier series
$$
\|t\|^2 = \frac{1}{12} + \frac{1}{2\pi^2} \sum_{r \neq 0} \frac{(-1)^r}{r^2} e(rt).
$$
Thus, expanding the square in $\Delta$ and inputting this expression yields
\begin{align*}
\Delta &= \sum_{1 \leq j_1,j_2 \leq d-1} \sg_{j_1}\sg_{j_2} \frac{1}{d} \sum_{1 \leq \ell \leq d} \left(\Big\|\frac{\ell j_1}{d}\Big\|^2 - \frac{1}{12}\right)\left(\Big\|\frac{\ell j_2}{d}\Big\|^2 - \frac{1}{12} \right) \\
&= \frac{1}{4\pi^4}\sum_{r_1,r_2 \neq 0} \frac{(-1)^{r_1+r_2}}{(r_1r_2)^2} \sum_{1 \leq j_1, j_2 \leq d-1} \sg_{j_1}\sg_{j_2} \frac{1}{d}\sum_{1 \leq \ell \leq d} e\left(\frac{\ell}{d}(j_1r_1-j_2r_2)\right) \\
&= \frac{1}{4\pi^4}\sum_{1 \leq j_1, j_2 \leq d-1} \sg_{j_1}\sg_{j_2} \sum_{\ss{r_1,r_2 \neq 0 \\ j_1r_1 \equiv j_2 r_2 \pmod{d}}} \frac{(-1)^{r_1+r_2}}{(r_1r_2)^2} .
\end{align*}
Note first of all that in the inner double sum, if $m = \max\{(r_1,d), (r_2,d)\} > 1$ then $m \geq P^-(d)$, and thus also $\max\{|r_1|,|r_2|\} \geq P^-(d)$. Since $(r_1r_2,d) > 1$ if and only if $\max\{(r_1,d),(r_2,d)\} > 1$, the contribution to $\Delta$ from $r_1,r_2 \neq 0$ with $(r_1r_2,d) > 1$ is thus
$$
\ll \sum_{1 \leq j_1,j_2 \leq d-1} \sg_{j_1}\sg_{j_2} \sum_{\ss{r_1,r_2 \neq 0 \\ \max\{|r_1|,|r_2|\} \geq P^-(d)}} \frac{1}{(r_1r_2)^2} \ll \frac{1}{P^-(d)} S_f(x)^2.
$$
%We may henceforth restrict to $(r_1r_2,d) = 1$ in what follows. \\
We next consider the contribution from those $r_1,r_2 \neq 0$ with $(r_1r_2,d) = 1$. 
Fix $1 \leq j_1,j_2 \leq d-1$ for the moment, and suppose $j_1r_1 \equiv j_2r_2 \pmod{d}$. Then $(j_1,d) = (j_2,d) =: \lambda$. Since $\lambda < d$ and $\lambda|d$ we have $d/\lambda \geq P^-(d)$.  Setting $J_i := j_i/(j_1,j_2)$ for $i = 1,2$, the congruence is equivalent to $J_1r_1 \equiv J_2r_2 \pmod{d/\lambda}$, with $(J_1,J_2) = (J_1J_2,d/\lambda) = 1$. 
%
%% and let $\lambda := ((j_1,j_2),d)$. 
%Setting $J_i := j_i/\lambda$, $i = 1,2$, we see that $j_1r_1 \equiv j_2r_2 \pmod{d}$ is equivalent to $J_1r_1 \equiv J_2r_2 \pmod{d/\lambda}$. Since, moreover, $(J_1,J_2) = 1$ and $(r_1r_2,d) = 1$, we deduce that $(J_1J_2,d/\lambda) = 1$ also. 
Separating the contribution of $\max\{|r_1|,|r_2|\} \geq d/\lambda$ from the remainder, we find
\begin{align}
\Delta &= \frac{1}{4\pi^4} \sum_{\ss{\lambda | d \\ \lambda < d}} \sum_{\ss{1 \leq J_1,J_2 \leq d/\lambda \\ (J_1,J_2) = 1 \\ (J_1J_2,d/\lambda) = 1}} \sg_{\lambda J_1}\sg_{\lambda J_2} \left(\sum_{\ss{1 \leq |r_1|,|r_2| <d/\lambda \\ (r_1r_2,d) = 1 \\ J_1r_1 \equiv J_2r_2 \pmod{d/\lambda}}} \frac{(-1)^{r_1+r_2}}{(r_1r_2)^2} + O\left(\frac{\lambda}{d}\right)\right) + O\left(\frac{1}{P^-(d)} S_f(x)^2\right) \nonumber \\
&= \frac{1}{4\pi^4} \sum_{\ss{\lambda | d \\ \lambda < d}} \sum_{\ss{1 \leq |r_1|,|r_2| <d/\lambda \\ (r_1r_2,d) = 1}} \frac{(-1)^{r_1+r_2}}{(r_1r_2)^2} \sum_{\ss{1 \leq J_1,J_2 \leq d/\lambda \\ (J_1,J_2) = 1 \\ (J_1J_2,d/\lambda) = 1 \\ J_1r_1 \equiv J_2r_2 \pmod{d/\lambda}}} \sg_{\lambda J_1}\sg_{\lambda J_2} + O\left(\frac{1}{P^-(d)} S_f(x)^2\right). \label{eq:noLargeRs}
\end{align}
%We now fix $r_1,r_2$ satisfying $1 \leq |r_1|,|r_2| < d/\lambda$ and $(r_1r_2,d) = 1$. Since $(J,J') = 1$ and $Jr_1 \equiv J'r_2 \pmod{d/\lambda}$, both $J,J'$ must also be coprime to $d/\lambda$. 
Note that, fixing $1 \leq |r_1|,|r_2|, J_1 < d/\lambda$, any possible choice of $J_2$ is unique and fixed by the congruence in the inner sum. 
%calling it $J_2^\ast(J_1,r_1,r_2)$, say, we see 
By hypothesis $\sg_{\lambda J_2} \ll \e S_f(x)$, and therefore the first expression in \eqref{eq:noLargeRs} is
$$
\ll \e S_f(x) \left(\sum_{\ss{1 \leq |r_1|,|r_2| < d \\ (r_1r_2,d) = 1}} \frac{1}{(r_1r_2)^2}\right) \sum_{\ss{\lambda|d \\ \lambda < d}} \sum_{\ss{1 \leq J \leq d/\lambda \\ (J,d/\lambda) = 1}} \sg_{\lambda J} \ll \e S_f(x)^2.
$$
It follows, therefore, that
$$
\Delta \ll \left(\e + \frac{1}{P^-(d)}\right)S_f(x)^2 \ll \e S_f(x)^2.
$$
By Chebyshev's inequality, we therefore deduce that for all but $O(\e^{1/2}\log^2(1/\e) d)$ choices of $1 \leq \ell \leq d-1$,
$$
\sum_{1 \leq j \leq d-1} \|\frac{\ell j}{d}\|^2 \sg_j = \left(\frac{1}{12} + O(\log(1/\e)^{-1})\right) S_f(x).
$$
Therefore, invoking \eqref{eq:distToFrac}, we deduce that for all but $O(\e^{1/2}\log^2(1/\e)d)$ choices of $1 \leq \ell \leq d-1$,
$$
\mb{D}(f^\ell,1;x)^2 \geq 8\cdot \left(\frac{1}{12} + O(\log(1/\e)^{-1}) \right) S_f(x) = \left(\frac{2}{3} + O(\log(1/\e)^{-1})\right) S_f(x),
$$
as claimed.
\end{proof}

\subsubsection{Case 2: some $\sg_{j_0}$ is large}
We next consider the case in which \emph{some} $\sg_{j_0}$ is large relative to $S_f(x)$. When $\mc{C}_d(\e)$ is large the following proposition will provide an alternative bound for the partial sums of $f^{\ell}$ when $\ell \in \mc{C}_d(\e)$.
% which allows us to deal with the case in which some $\sg_m$ is quite large.
\begin{prop}\label{prop:TKAppCes}
Let $d \geq 2$, $\e \in (0,1)$ and $3 \leq x \leq x_0$.
% chosen such that $\log(1/\e) \leq \frac{1}{6}\log\log x$.
%satisfy $P^-(d) > \log\log d$. 
Let $c > 0$ satisfy $P^-(d) > 1/c$, and assume that $|\mc{C}_d(\e)| \geq cd$. Put $m := \lceil 2/c^2\rceil$ and suppose that
$$
m^2\log(1/\e) < \frac{1}{32}\log\log x.
$$
Then for any $1 \leq j, \ell \leq d-1$ and $\lambda \in (0,1)$ at least one of the following bounds hold:
$$
\frac{1}{x}\left|\sum_{n \leq x} f^\ell(n)\right| \leq \e,
$$
or else
$$
\frac{1}{x} \left|\sum_{n \leq x} f^\ell(n)\right| \ll \|\ell j/d\|^{-1}\left(\lambda \e^{-32m^2} + \log(e/\lambda)\left(\lambda^{1-2/\pi} + \frac{1}{\sg_j}\right) + \frac{1}{\sqrt{\sg_j}} + \frac{1}{(\log x)^{1-2/\pi+o(1)}}\right).
$$
%where, as previously, $m = \lceil 2/c^2\rceil$.
\end{prop}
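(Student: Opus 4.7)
If $\ell \notin \mc{C}_d(\e)$ then $|S_\ell(x)|/x < \e$ trivially, so I may assume $\ell \in \mc{C}_d(\e)$, in which case Proposition \ref{prop:approxHom} furnishes the crucial bound $|t_\ell| \leq 3\e^{-32m^2}/\log x$. Let $\alpha := e(\ell j/d)$ and $P_j := \{p : f(p) = e(j/d)\}$; by complete multiplicativity $f^\ell(p) = \alpha$ for $p \in P_j$ and $f^\ell(pm) = \alpha f^\ell(m)$ whenever $p \in P_j$ and $(m,p)=1$. The elementary relation $|1-\alpha| \asymp \|\ell j/d\|$ is what will ultimately produce the $\|\ell j/d\|^{-1}$ prefactor, so my strategy is to engineer an identity in which $S_\ell(x)$ is multiplied by something proportional to $1-\alpha$.

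The main engine is a Tur\'{a}n-Kubilius variance identity for the strongly additive function $\omega_{P_j}(n) := |\{p \in P_j : p|n\}|$, whose mean value on $[1,x]$ is $\sg_j$. The Tur\'{a}n-Kubilius inequality gives $\sum_{n \leq x}(\omega_{P_j}(n) - \sg_j)^2 \ll x \sg_j$, and Cauchy-Schwarz then yields
$$\left|\sum_{n \leq x} f^\ell(n)\bigl(\omega_{P_j}(n) - \sg_j\bigr)\right| \ll x\sqrt{\sg_j}.$$
On the other hand, unfolding $\omega_{P_j}$ and using complete multiplicativity to factor out $\alpha$ (with the contribution of $p^2|n$ and of the coprimality condition $(m,p)=1$ absorbed into $O(x)$ via $\sum_p x/p^2 \ll x$) yields
$$\sum_{n \leq x} f^\ell(n)\omega_{P_j}(n) = \alpha \sum_{p \in P_j, p \leq x} S_\ell(x/p) + O(x),$$
and combining produces the pivot identity
$$\alpha \sum_{p \in P_j, p \leq x} S_\ell(x/p) = \sg_j S_\ell(x) + O\bigl(x(1+\sqrt{\sg_j})\bigr). \qquad(\star)$$

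To convert $(\star)$ into a bound on $|S_\ell(x)|/x$, I split the prime sum at $p = x^\lambda$. For short primes $p \leq x^\lambda$, I apply a Granville-Soundararajan-style pretentious Lipschitz estimate to replace $S_\ell(x/p)$ by $p^{-1-it_\ell}S_\ell(x)$, up to a Hall-type error of size $(x/p)(\log x)^{-(1-2/\pi)+o(1)}$; the inequality $|t_\ell \log p| \leq 3\lambda\e^{-32m^2}$ then lets me further approximate $p^{-it_\ell} = 1 + O(\lambda\e^{-32m^2})$, which produces the leading $\lambda \e^{-32m^2}$ term. For long primes $p > x^\lambda$, I use the trivial bound $|S_\ell(x/p)| \leq x/p$ (refined by a Hall-type estimate on each intermediate scale $x/p$) together with the Mertens-type summation $\sum_{x^\lambda < p \leq x} 1/p \asymp \log(e/\lambda)$, yielding the $\log(e/\lambda)\lambda^{1-2/\pi}$ and $\log(e/\lambda)/\sg_j$ contributions. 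Substituting back into $(\star)$ and rearranging, I arrive at
$$\bigl(\sg_j - \alpha \sg_j(x^\lambda)\bigr) S_\ell(x) = (\text{errors of the shape in the statement}),$$
whose prefactor satisfies $|\sg_j - \alpha\sg_j(x^\lambda)| \gtrsim \|\ell j/d\|\sg_j$ by the triangle inequality and $|1-\alpha| \asymp \|\ell j/d\|$. Dividing through and bounding the error terms produces the second alternative in the proposition.

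The chief obstacle is cleanly deploying the pretentious Lipschitz comparison: extracting the Hall exponent $1 - 2/\pi$ requires both the small value of $|t_\ell|$ guaranteed by Proposition \ref{prop:approxHom} and a careful invocation of Hall-type results for pretentious multiplicative functions, so that $n^{it_\ell}$-pretentiousness of $f^\ell$ transports cleanly across dyadic ranges inside $[1,x]$ and is not disrupted by the error introduced when replacing $p^{-it_\ell}$ by $1$. A secondary subtlety is handling the case in which $\sg_j(x^\lambda)$ is significantly smaller than $\sg_j$: the discrepancy $\sg_j - \sg_j(x^\lambda)$ must then be absorbed into the $\log(e/\lambda)/\sg_j$ summand by a careful accounting, which is precisely why that term appears explicitly in the bound.
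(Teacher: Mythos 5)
Your proposal is essentially correct and follows the same route as the paper: deploy Proposition \ref{prop:approxHom} to control $t_\ell$, run a Tur\'{a}n--Kubilius argument against an additive function counting prime divisors in $S_j$, split the resulting prime sum at $x^\lambda$, use a Lipschitz estimate on the short range and Mertens on the long range, and finish by rearranging through the factor $|1-e(\ell j/d)| \gg \|\ell j/d\|$. The differences are cosmetic: the paper uses the completely additive $\Omega_{S_j}$ rather than your strongly additive $\omega_{P_j}$ (both unfold exactly for a completely multiplicative $f$, so your $O(x)$ correction for $p^2 \mid n$ is unnecessary), and the paper first converts $S_\ell(x)$ into the twisted sum $\sum_{n \leq x} f^\ell(n)n^{-it_\ell}$ via \cite[Lem.~7.1]{GSDecay} \emph{before} applying Tur\'{a}n--Kubilius and the Lipschitz bound, whereas you keep the untwisted $S_\ell$ throughout and compensate by invoking the Lipschitz comparison in the form $S_\ell(x/p) \approx p^{-1-it_\ell}S_\ell(x)$, which amounts to composing the same two lemmas twice at each scale. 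Your handling of the prefactor $\sg_j - \alpha\sg_j(x^\lambda)$ is also fine: when $\sg_j$ is small the term $\|\ell j/d\|^{-1}\log(e/\lambda)/\sg_j$ makes the stated bound trivial, so one may assume $\|\ell j/d\|\sg_j \gg \log(e/\lambda)$ and the lower bound $|\sg_j - \alpha\sg_j(x^\lambda)| \gg \|\ell j/d\|\sg_j$ holds, exactly as the paper's version implicitly requires.
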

\begin{proof}
We may assume that $x$ is as large (and $\e$ as small) as desired, otherwise at least one of the alternatives is trivial. We may also assume that $\sg_j \geq 1$ and $\lambda \e^{-32m^2} < 1/2$, since otherwise the second alternative is trivial. \\
Suppose the first alternative fails. Arguing as in the proof of Proposition \ref{prop:approxHom},
$$
\mb{D}(f^\ell,n^{it_\ell};x)^2 \leq 2\log(1/\e) < \frac{1}{16m^2}\log\log x.
$$ 
for $\e$ small enough. \\
Seeking to apply \cite[Thm. 4]{GSDecay} below, we must introduce some notation. For each $1 \leq \ell \leq d-1$ define
$$
F_{\ell}(s) := \prod_{p \leq x} \left(1 + \sum_{j \geq 1} \frac{f^\ell(p^j)}{p^{js}}\right), \quad s \in \mb{C}.
$$
Let $y_{\ell,0} \in [-2\log x,2\log x]$ be chosen so that $|F_{\ell}(1+iy_{\ell,0})| = \max_{|y| \leq 2\log x} |F_{\ell}(1+iy)|$, and set
$$
y_{\ell} := \begin{cases} y_{\ell,0} &\text{ if } |y_{\ell,0}| < \tfrac{1}{2}\log x \\ 0 &\text{ otherwise.} \end{cases}
$$
We will need an upper bound for $|y_{\ell}|$ whenever $y_{\ell} \neq 0$, so assume for the moment that this is the case. By Mertens' theorem, given any $y \in \mb{R}$,
$$
\frac{|F_{\ell}(1+iy)|}{\log x} \asymp \frac{1}{\log x} \exp\left(\sum_{p \leq x} \frac{\text{Re}(f^{\ell}(p)p^{-iy})}{p}\right) \asymp e^{-\mb{D}(f,n^{iy};x)^2}.
$$
Thus, if $y_{\ell} \neq 0$ then
$$
\mb{D}(f^\ell,n^{iy_{\ell}};x)^2 = \min_{|y| \leq 2\log x} \mb{D}(f^{\ell},n^{iy};x)^2 + O(1) \leq \mb{D}(f^{\ell},n^{it_{\ell}};x)^2 + O(1) \leq 2 \log(1/\e) + O(1).
$$
By \eqref{eq:pretTriProd} and the crude bound $|t_{\ell}-y_{\ell}| \leq 2\log x$ we thus obtain
$$
\mb{D}(1,n^{i(t_\ell-y_\ell)};x) \leq \mb{D}(f^{\ell},n^{iy_{\ell}};x) + \mb{D}(f^{\ell}, n^{it_{\ell}};x) \leq 2\sqrt{2 \log(1/\e)} + O(1).
$$
In light of \eqref{eq:big1nit} and the hypothesis $\log(1/\e) < \frac{1}{32}\log\log x$, we see that $|t_{\ell}-y_{\ell}| \leq 100$. Thus, \eqref{eq:small1nit} delivers
$$
\log(1+|t_{\ell}-y_{\ell}|\log x) \leq 10\log(1/\e) + O(1).
$$
Using Proposition \ref{prop:approxHom}, we therefore conclude that
\begin{equation} \label{eq:ylBound}
|y_{\ell}| \leq |y_{\ell}-t_{\ell}| + |t_\ell| \ll \frac{\e^{-10} + \e^{-32m^2}}{\log x} \ll \frac{\e^{-32m^2}}{\log x},
\end{equation} 
a bound that we shall employ momentarily. \\
%We may assume that $\log(1/\e) \leq \frac{1}{6}\log\log x$, and therefore 
With this setup complete we may now proceed with the proof of the proposition. By \cite[Lem. 7.1]{GSDecay} we have
\begin{align}\label{eq:twistChiEll}
\frac{1}{x}\sum_{n \leq x} f^\ell(n) &= \frac{x^{iy_\ell}}{1+iy_\ell} \frac{1}{x} \sum_{n \leq x} f^\ell(n) n^{-iy_\ell} + O\left(\frac{e^{\mb{D}(f^\ell,n^{iy_\ell};x)\sqrt{(2+o(1))\log\log x}}}{\log x}\right) \nonumber\\
&= \frac{x^{iy_\ell}}{1+iy_\ell} \frac{1}{x} \sum_{n \leq x} f^\ell(n) n^{-iy_\ell} + O\left((\log x)^{\tfrac{1}{2\sqrt{2}}-1+o(1)}\right).
\end{align}
Let now $S_j := \{p \leq q : f(p) = e(j/d)\}$ and define the completely additive function 
$$
\Omega_{S_j}(n) := \sum_{\ss{p^k || n \\ p \in S_j}} k = \sum_{\ss{mp = n \\ p \in S_j}} 1,
$$
whose mean-value over $n \leq x$ is asymptotically
$$
\sum_{\ss{p^k \leq x \\ p \in S_j}} \frac{1}{p^k}\left(1-\frac{1}{p}\right) + O\left(\sqrt{\frac{\log\log x}{\log x}}\right) = \sg_j + O\left(\sqrt{\frac{\log\log x}{\log x}}\right).
$$
By the Tur\'{a}n-Kubilius inequality \cite[Thm. III.3.1]{Ten} and complete multiplicativity, we deduce that
\begin{align*}
\frac{1}{x}\sum_{n \leq x} f^\ell(n)n^{-iy_\ell} &= \frac{1}{x}\sum_{n \leq x} f^\ell(n)n^{-iy_\ell} \cdot \frac{\Omega_{S_j}(n) + (\sg_j-\Omega_{S_j}(n))}{\sg_j} \\
&= \frac{1}{\sg_j} \cdot \frac{1}{x}\sum_{\ss{mp \leq x \\ p \in S_j}} f^\ell(p)p^{-iy_\ell} f^\ell(m)m^{-iy_\ell} + O\left(\frac{1}{x\sqrt{\sg_j}} \sum_{n \leq x} \frac{|\Omega_{S_j}(n)-\sg_j|}{\sqrt{\sg_j}}\right) \\
&= \frac{1}{\sg_j} \sum_{\ss{p \leq x \\ p \in S_j}} \frac{f^\ell(p)p^{-iy_\ell}}{p} \frac{p}{x}\sum_{m \leq x/p} f^\ell(m)m^{-iy_\ell} + O\left(\frac{1}{\sqrt{\sg_j}}\right).
\end{align*}
We split the sum over $p\leq x$ into the segments $p \leq x^{\lambda}$ and $x^{\lambda} < p \leq x$. The contribution from the second segment to the above is
$$
\ll \frac{1}{\sg_j} \sum_{x^\lambda < p \leq x} \frac{1}{p} \ll \frac{\log(e/\lambda)}{\sg_j}.
$$
Now, for each $p \leq x^\lambda$ we apply the Lipschitz bound \cite[Thm. 4]{GSDecay}, which yields
$$
\frac{p}{x} \sum_{n \leq x/p} f^\ell(n) n^{-iy_\ell} = \frac{1}{x} \sum_{n \leq x} f^\ell(n) n^{-iy_\ell} + O\left(\lambda^{1-2/\pi} \log(e/\lambda) + \frac{1}{(\log x)^{1-2/\pi+o(1)}}\right).
$$
We thus conclude that
\begin{align*}
\frac{1}{x}\sum_{n \leq x} f^\ell(n)n^{-iy_\ell} &= \left(\frac{1}{\sg_j} \sum_{\ss{p \leq x^\lambda \\ p \in S_j}} \frac{f^\ell(p)p^{-iy_\ell}}{p}\right) \frac{1}{x} \sum_{n \leq x} f^\ell(n)n^{-iy_\ell} \\
&+ O\left(\frac{1}{\sqrt{\sg_j}} + \log(e/\lambda) \left(\frac{1}{\sg_j} + \lambda^{1-2/\pi}\right) + (\log x)^{\tfrac{2}{\pi}-1+o(1)}\right).
\end{align*}
For each $p \leq x^\lambda$, \eqref{eq:ylBound} yields
$$
p^{-iy_\ell} = 1 + O(\lambda \e^{-32m^2}),
$$
and therefore
\begin{align*}
\frac{1}{\sg_j} \sum_{\ss{p \leq x^\lambda \\ p \in S_j}} \frac{f^\ell(p)p^{-iy_\ell}}{p} &= \frac{1}{\sg_j} \sum_{\ss{p \leq x \\ p \in S_j}} \frac{f^\ell(p)}{p} + O\left(\lambda \e^{-32m^2} + \frac{\log(e/\lambda)}{\sg_j}\right) \\
&= e(\ell j/d) + O\left(\lambda \e^{-32m^2} + \frac{\log(e/\lambda)}{\sg_j}\right).
\end{align*}
Hence, appealing once again to \cite[Lem. 7.1]{GSDecay}, we find
\begin{align*}
\frac{1}{x} \sum_{n \leq x} f^\ell(n) = e(\ell j/d) \frac{1}{x}\sum_{n \leq x} f^\ell(n) + O\left(\frac{1}{\sqrt{\sg_j}} + \log(e/\lambda) \left(\frac{1}{\sg_j} + \lambda^{1-2/\pi}\right) + \lambda \e^{-32m^2}+ (\log x)^{\tfrac{2}{\pi}-1+o(1)}\right).
\end{align*}
Rearranging the above and using $|1-e(\ell j/d)| \gg \|\ell j /d\|$, the claim follows.
\end{proof}

\begin{proof}[Proof of Theorem \ref{thm:meanSqShortSumsParams}]
Let $\e > 0$ be a small parameter to be chosen later in terms of 
$$
\Sigma := \min\{P^-(d), 2+\sum_{\ss{p\leq x \\ f(p) \neq 0,1}} p^{-1}\},
$$
subject only to the constraints 
\begin{equation}\label{eq:SigConst}
\e \geq \Sigma^{-1/3} \geq P^-(d)^{-1/3}.
% \quad 32\e^{-2}\log(1/\e) < \log\log x.
\end{equation}
We may assume in what follows that $\Sigma$ (and thus also $x$ and $d$) is larger than any specified constant, since otherwise the claim is trivial (by adjusting the implicit constant appropriately). \\
Set $M := \lceil 2/\e \rceil$. Note that as $\Sigma \leq \log\log x + O(1)$, the constraint \eqref{eq:SigConst} implies that when $x$ is large enough,
$$
32M^2 \log(1/\e)  \leq  \frac{512}{3} \Sigma^{2/3} \log \Sigma < \log\log x.
$$
%for $x$ large enough. \\
As above, write
$$
\mc{C}_d(\sqrt{\e}) = \Big\{1 \leq \ell \leq d-1 : \frac{1}{x}\Big|\sum_{n \leq x} f^\ell(n) \Big| \geq \e^{1/2} \Big\}.
$$
We consider several cases.\\
\underline{Case 1:} Suppose first that $|\mc{C}_d(\sqrt{\e})| \leq \sqrt{\e} d$. Adding in the contribution from $\ell = 0$, we trivially have
$$
\mc{L} := \frac{1}{d}\sum_{0 \leq \ell \leq d-1} \left|\sum_{n \leq x} f^\ell(n)\right|^2 \leq \frac{|\mc{C}_d(\sqrt{\e})|+1}{d} x^2 + \sqrt{\e} x^2 \ll \sqrt{\e} x^2.
$$
\underline{Case 2:} Suppose next that $|\mc{C}_d(\sqrt{\e})| > \sqrt{\e} d$. For each $1 \leq \ell \leq d-1$ we choose $t_{\ell} \in [-2M/\e,2M/\e]$ so that
$$
\mb{D}(f^\ell, n^{it_\ell}; x) = \min_{|t| \leq 2M/\e} \mb{D}(f^\ell, n^{it};x).
$$
Since $\sqrt{\e} P^-(d) \geq P^-(d)^{5/6} > 1$, by Proposition \ref{prop:approxHom} we have 
\begin{equation}\label{eq:tlEst}
\max_{1 \leq \ell \leq d-1} |t_\ell| \leq \frac{3 \e^{-(20/\e)^2}}{\log x}.
\end{equation}
As above, let 
$$
S_j := \{p\leq q : f(p) = e(j/d)\}, \quad \sg_j := \sg_j(x) = \sum_{\ss{p \leq x \\ p \in S_j}} \frac{1}{p},
$$ 
and define
$$
S_f(x) := \sum_{1 \leq j \leq d-1} \sg_j.
$$
We consider two subcases. \\
\underline{Case 2.a)} Suppose there is $1 \leq j_0 \leq d-1$ such that 
\begin{equation}\label{eq:largesgjHyp}
\sg_{j_0} \geq \e S_f(x) \gg \e\Sigma.
\end{equation}
%As in the proof of Lemma \ref{lem:LargeSigCase}, 
Take $J := \llf \e^{-1/2}\rrf$. 
%Note that if $d|kj_0$ for some $k \leq J$ then $
By the Erd\H{o}s-Tur\'{a}n inequality \cite[Thm. I.6.15]{Ten} we have
\begin{align}\label{eq:ETApp}
|\, |\{1 \leq \ell \leq d : \|\ell j_0/d\| \leq \e^{1/2}\}| - 2\e^{1/2}d \, | &\ll \frac{d}{J} + d\sum_{1 \leq k \leq J} \frac{1}{k} \left|\sum_{1 \leq \ell \leq d} e\left(\frac{klj_0}{d}\right)\right| \nonumber \\
&\ll \e^{1/2} d + d^2\sum_{\ss{1 \leq k \leq J \\ d|kj_0}} \frac{1}{k}.
% \ll \e^{1/2}d,
\end{align}
%where in the last step we concluded that the sum over $k$ vanishes, since 
We observe that if $d|kj_0$, then as $j_0 < d$,
$$
J \leq 1/\e < P^-(d) \leq d/(j_0,d) \leq k. 
$$
Thus, the sum on the RHS of \eqref{eq:ETApp} is zero, and hence
$$
|\{1 \leq \ell \leq d : \|\ell j_0/d\| \leq \e^{1/2}\}| = 2\e^{1/2} d + O(\e^{1/2}d) \ll \e^{1/2}d.
$$
%is prime\footnote{The condition that $d$ is prime can be weakened here; we mainly require that if $d|km$ then $k$ must be large. For instance, an assumption like $P^-(d)$ being sufficient large relative to $\e$ would be enough.} 
%and $1 \leq k,m < d$ to conclude that the sum over $k$ is 0. 
It follows that 
$$
\|\ell j_0/d\| > \e^{1/2} \text{ for all but } O\left(\e^{1/2} d\right) \text{ choices of } 1 \leq \ell \leq d-1.
$$
%For all but $O(\e^{1/2} d)$ choices of $\ell$ we have $\|\ell m/d\| > \e^{1/2}$. 
Now, let $\lambda \in (0,1)$ be a parameter to be chosen shortly, and let $\ell$ satisfy $\|\ell j_0/d\| > \e^{1/2}$. By Proposition \ref{prop:TKAppCes} (taking $c = \sqrt{\e}$ and $\sqrt{\e}$ in place of $\e$ there), 
%we deduce that for each $\ell$ satisfying $\|\ell j_0/d\| > \e^{1/2}$, 
either 
$$
\Big|\sum_{n \leq x} f^\ell(n)\Big| \leq \sqrt{\e} x,
$$ 
or else
\begin{align*}
\frac{1}{x}\left|\sum_{n \leq x} f^\ell(n)\right| \ll \e^{-1/2} \left(\lambda \e^{-(20/\e)^2} + \log(e/\lambda)\left(\lambda^{1-2/\pi} + \frac{1}{\sg_{j_0}}\right) + \frac{1}{\sqrt{\sg_{j_0}}}\right).
\end{align*}
%if $x \geq x_0(\e)$. 
We select
$$
\lambda := \min\{\e^{1+(20/\e)^2}, e^{-\sqrt{\sg_{j_0}}}\},
$$
so that
%, using $\|\ell m/d\| > \e^{1/2}$, 
%when $\e$ is small enough we obtain, 
for all but $O(\e^{1/2}d)$ choices of $\ell$,
$$
\frac{1}{x}\left|\sum_{n \leq x} f^\ell(n)\right| \ll \e^{-1/2} \left(\e + \frac{1}{\sqrt{\sg_{j_0}}} + \frac{\log(1/\e)}{\e^2 \sg_{j_0}} + \sg_{j_0}^{1/2}e^{-\frac{1}{3}\sqrt{\sg_{j_0}}}\right) \ll \e^{1/2} + \frac{1}{\sqrt{\e\sg_{j_0}}} + \frac{\log(1/\e)}{\e^{5/2} \sg_{j_0}}.
$$
For the remaining $\ll \e^{1/2}d$ choices of $\ell$ we simply apply the trivial bound. Averaging over $0 \leq \ell \leq d-1$ and using \eqref{eq:largesgjHyp}, we thus obtain
$$
\mc{L} \ll \e^{1/2} x^2 + x^2\left(\e + \frac{1}{\e\sg_{j_0}} + \frac{\log(1/\e)^2}{\e^5 \sg_{j_0}^2}\right) \ll \left(\e^{1/2} + \frac{1}{\e^2 \Sigma} + \frac{\log(1/\e)^2}{\e^{7}\Sigma^2}\right)x^2.
$$
\underline{Case 2.b)} Suppose finally that $\sg_r < \e S_f(x)$ for all $1\leq r \leq d-1$. \\
By Lemma \ref{lem:SmallSigCase}, we find that for all but $O(\e^{1/2}\log^2(1/\e)d)$ choices of $\ell$ we have
$$
\mb{D}(f^\ell, 1;x)^2 \geq \left(\frac{2}{3} + O(\log(1/\e)^{-1})\right) \Sigma.
$$
By \eqref{eq:pretTri} we also have
\begin{equation}\label{eq:fellto1}
\mb{D}(f^\ell,1;x) \leq \mb{D}(f^\ell,n^{it_\ell};x) + \mb{D}(1,n^{it_\ell};x),
\end{equation}
and moreover by \eqref{eq:tlEst},
\begin{equation}\label{eq:nitlDist}
\mb{D}(1,n^{it_\ell};x)^2 = \log(1+|t_\ell|\log x) + O(1) \leq \frac{400}{\e^2}\log(1/\e) + O(1).
\end{equation}
Since $\e \geq \Sigma^{-1/3}$ by assumption, we have
$$
\frac{400}{\e^2} \log(1/\e) \leq 400 \Sigma^{2/3} \log \Sigma \leq \frac{1}{20} \Sigma
$$
whenever $\Sigma$ is large enough. Combining this with \eqref{eq:nitlDist}, we thus find that $\mb{D}(1,n^{it_\ell};x)^2 \leq \Sigma/20$.
Inserting this into \eqref{eq:fellto1} and using the inequality $(a+b)^2 \leq 2(a^2+b^2)$ we deduce that
\begin{align*}
\mb{D}(f^\ell,n^{it_\ell};x)^2 \geq \frac{1}{2}\mb{D}(f^\ell,1;x)^2 - \mb{D}(1,n^{it_\ell};x)^2 &\geq \left(\frac{1}{3} + O(\log(1/\e)^{-1})\right) \Sigma - \frac{400}{\e^2}\log(1/\e) \\
&\geq \left(\frac{1}{4} + O(\log(1/\e)^{-1})\right) \Sigma,
\end{align*}
Therefore, on applying \eqref{eq:HMT} we obtain
$$
\frac{1}{x}\left|\sum_{n \leq x} f^\ell(n)\right| \ll \frac{\e}{M} + \frac{1}{\log x} + \mb{D}(f^\ell,n^{it_\ell};x)^2 e^{-\mb{D}(f^\ell,n^{it_\ell};x)^2} \ll \e^2 + \Sigma e^{-\frac{1}{5}\Sigma},
$$
all but $O(\e^{1/2}\log^2(1/\e) d)$ choices of $\ell$.
% for sufficiently small $\e$, and therefore 
Bounding trivially the partial sums corresponding to the exceptional $\ell$, we obtain
$$
\mc{L} \ll x^2\left(\e^{1/2}\log^2(1/\e) + \Sigma^2e^{-\frac{2}{5}\Sigma}\right)
$$
in this case. \\
\underline{Conclusion:} Combining Cases 1, 2(a) and 2(b) together, 
$$
\mc{L} \ll x^2\left(\e^{1/2}\log^2(1/\e) + \frac{1}{\e^2 \Sigma} + \frac{\log^2(1/\e)}{\e^{7}\Sigma^2} + \Sigma^2 e^{-\frac{2}{5}\Sigma}\right) \ll x^2\left(\e^{1/2}\log^2(1/\e) + \frac{1}{\e^2\Sigma} + \frac{\log^2(1/\e)}{\e^{7}\Sigma^2}\right).
$$
We choose $\e = \Sigma^{-4/15}$, which yields the bound
$$
\mc{L}
%x^2 \left(\Sigma^{-1/11} + \frac{(\log \Sigma)^2}{\Sigma^{1/11}}\right) 
\ll x^2 \frac{(\log \Sigma)^2}{\Sigma^{2/15}},
$$ 
as claimed.
\end{proof}

\section{Proof of Theorem \ref{thm:HilApp}} \label{sec:HilAppLow}
\noindent We now turn to the proof of Theorem \ref{thm:HilApp}, concerning lower bounds for the sum
$$
\sum_{\ss{p \leq x \\ f(p) \neq 0,1}} \frac{1}{p}, \text{ with } f \in \mc{M}_1(x_0;d), \, x \geq x_0^{\delta}.
$$
We will analyze the \emph{prime} values $(f(p))_{p \leq x}$ using the obvious implication 
$$
f(p) = 1 \text{ for all } p |n \Rightarrow f(n) = 1
$$ 
and the (weak) equidistribution property of the \emph{integer} values $(f(n))_{n \leq x_0}$. \\
Our general argument will allow us to handle $d \ll (\eta \delta)^{-1} e^{(\log x_0)^c}$ for some absolute constant $c > 0$. Using zero-density estimates for Dirichlet $L$-functions, we may extend this range when $f$ is a character of order $d$ as follows.
%In the particular case that $f$ is an order $d$ character $\chi \pmod{q}$, however, we may extend this range using 
%To deal with the complementary range of $d$ in the particular case that $f$ is an order $d$ character $\chi \pmod{q}$, we use 
%zero-density estimates for Dirichlet $L$-functions, as follows.
\begin{prop}\label{prop:HilAppChar}
Fix $c \in (0,1)$. If $\chi$ is a primitive character modulo prime $q$ with order $d \geq e^{(\log q)^c}$ then
$$
\sum_{\ss{p \leq q \\ \chi(p) \neq 1}} \frac{1}{p} \geq (c-o(1)) \log\log q, \quad q \ra \infty.
$$
\end{prop}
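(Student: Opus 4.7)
The plan is to combine orthogonality of the characters $\{\chi^\ell\}_{\ell=0}^{d-1}$ with a log-free zero-density estimate for Dirichlet $L$-functions. Since for each prime $p \neq q$, $\chi(p) \in \mu_d$ and hence $\mathbf{1}_{\chi(p) = 1} = \tfrac{1}{d}\sum_{\ell=0}^{d-1}\chi^\ell(p)$, multiplying by $1/p$, summing over $p \leq q$, and invoking Mertens' theorem gives
$$
\sum_{\ss{p \leq q \\ \chi(p) \neq 1}}\frac{1}{p} = \left(1 - \frac{1}{d}\right)\log\log q - \frac{1}{d}\text{Re}\sum_{\ell=1}^{d-1}\sum_{p \leq q}\frac{\chi^\ell(p)}{p} + O(1).
$$
As $d \geq e^{(\log q)^c} \ra \infty$, the leading term is $(1+o(1))\log\log q$, so it suffices to show that $\frac{1}{d}\text{Re}\sum_{\ell=1}^{d-1}\sum_{p \leq q}\chi^\ell(p)/p \leq (1-c+o(1))\log\log q$.

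For each non-trivial $\chi^\ell \pmod{q}$, the Mertens-type identity $\text{Re}\sum_{p\leq q}\chi^\ell(p)/p = \log|L(1+1/\log q, \chi^\ell)| + O(1)$ reduces the problem to bounding the average $\frac{1}{d}\sum_{\ell=1}^{d-1}\log|L(1+1/\log q, \chi^\ell)|$ above by $(1-c+o(1))\log\log q$. Fix a small parameter $\eta \asymp (c-\e)\log d/\log q \asymp (\log q)^{c-1}$ and a height $T$ polylogarithmic in $q$. By the Huxley--Gallagher log-free zero-density estimate---which for $\chi$ ranging over characters modulo $q$ gives $\sum_\chi N(1-\eta, T, \chi) \ll (qT)^{A\eta}(\log qT)^B$ for absolute constants $A, B$---the number of \emph{bad} indices $\ell$, those for which $L(s, \chi^\ell)$ has a zero in $\{s : \text{Re}(s) > 1-\eta, |\text{Im}(s)| \leq T\}$, is at most $d^{(c-\e)A + o(1)}$. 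Choosing $\e$ small enough that $(c-\e)A < 1$, this exceptional set has size $o(d)$, and its contribution to the mean is bounded trivially by $o(\log\log q)$.

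For good $\ell$, $L(s, \chi^\ell)$ is non-vanishing in the rectangle, and a Hadamard-product or Borel--Carath\'eodory analysis converts the zero-free region into a pointwise bound of the form $\log|L(1+1/\log q, \chi^\ell)| \leq (1-c+o(1))\log\log q$. Averaging over both good and bad $\ell$ then yields the claimed mean estimate. The hard step I expect is this last conversion: extracting such a sharp pointwise upper bound for $\log|L(1+1/\log q,\chi)|$ from a zero-free region of width only $\eta \asymp (\log q)^{c-1}$ requires a delicate balance of parameters, ensuring simultaneously that the exceptional set is $o(d)$ and that the savings on good characters is exactly the target factor $(1-c)$ of $\log\log q$.
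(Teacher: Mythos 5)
Your proposal follows essentially the same route as the paper: use orthogonality to convert $\sum_{\chi(p)\neq 1}1/p$ into an average of $\text{Re}\sum_{p\leq q}\chi^\ell(p)/p$ over $1\leq\ell\leq d-1$, translate each such prime sum into $\log|L(1+1/\log q,\chi^\ell)|$ via a Mertens-type identity, split the indices $\ell$ into ``good'' and ``bad'' via a log-free zero-density estimate, bound the bad set by $o(d)$, and obtain the $(1-c)\log\log q$ pointwise bound for the good $L$-values from the zero-free rectangle. The paper phrases this multiplicatively, estimating $\prod_{\ell}|L(1+1/\log q,\chi^\ell)|$ from above and below and taking $d$th roots; your additive (averaged-log) version is logically the same thing and is in fact slightly cleaner, because it absorbs any exceptional (Siegel) $\chi^{\ell_0}$ trivially into the $o(d)$ bad set via the crude bound $|\text{Re}\sum_{p\leq q}\chi^{\ell_0}(p)/p|\leq\log\log q+O(1)$, whereas the paper has to work harder to get a lower bound on the possibly tiny exceptional $L$-value, producing the extra $(\sqrt{q}\log q)^{-1}$ factor.

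One point deserves care in your parameterization. You choose $\eta\asymp(c-\e)\log d/\log q$ and then claim the bad set has size $d^{(c-\e)A+o(1)}$, which you make $o(d)$ by ``choosing $\e$ small enough that $(c-\e)A<1$.'' This recipe works only if $cA<1$; but $A$ is a fixed absolute constant coming from the zero-density theorem (possibly larger than $1$), and $c\in(0,1)$ is arbitrary, so for larger $c$ no small $\e$ achieves $(c-\e)A<1$ — you would instead need $\e>c-1/A$, which is not small. Fortunately the argument still goes through with such an $\e$ (the only effect on the good-character bound is an additive constant $\log\tfrac{1}{c-\e}=O_\e(1)$, harmless inside the $o(\log\log q)$), so this is an exposition issue rather than a fatal one. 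The more robust fix, and the one the paper adopts, is to take $\eta=(\log q)^{\theta-1}$ for a parameter $\theta<c$. Then the number of bad characters is $\ll\exp(O((\log q)^\theta))$, which is exponentially smaller than $d\geq\exp((\log q)^c)$ regardless of the implied constant $A$, and the good-character bound is $(1-\theta+o(1))\log\log q$; letting $\theta\to c^-$ then gives the stated result with no constraint relating $c$ and $A$. Your final remark correctly identifies the remaining technical step (converting the zero-free rectangle into the pointwise $\log|L|$ bound); the paper handles it by truncating the Euler product at $X=\exp(10(\log q)^{1-\theta}\log\log q)$ rather than by Borel--Carath\'eodory, but either should do.
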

\begin{proof}
We will estimate the product
\begin{equation}\label{eq:L1ProdEst}
\prod_{1 \leq \ell \leq d} |L(1+1/\log q,\chi^\ell)|
\end{equation}
in two ways. \\
For the first, note that if $\chi^\ell$ is a non-exceptional, non-principal character (see e.g. \cite[Sec. 4.2]{GraMan} for a precise definition) then by e.g. \cite[(4.3)]{GraMan}, 
%to handle non-principal $\chi^\ell$ we have
$$
|L(1+1/\log q,\chi^\ell)| = \exp\left(\text{Re}\left(-\sum_{p \leq q} \log\Big( 1-\frac{\chi^\ell(p)}{p^{1+1/\log q}}\Big) + O(1)\right)\right) = \exp\left(\sum_{p \leq q} \frac{\chi^\ell(p)}{p} + O(1)\right).
$$
This estimate also holds when $\ell = d$ using properties of the Riemann zeta function. This bound thus holds for all but at most one $1 \leq \ell \leq d-1$, and if such an exception exists then the corresponding character must be real.  \\
If $d$ is even then let $\ell_0 \in \{1,\ldots,d-1\}$ be such that $\chi^{\ell_0}$ is real (there is a unique such choice); otherwise, choose $\ell_0$ arbitrarily. Whether $\chi^{\ell_0}$ is exceptional or not, \cite[Thms. II.8.20-II.8.21]{Ten} implies that if $q$ is sufficiently large then
$$
|L(1+1/\log q, \chi^{\ell_0})| \prod_{p \leq q} \left(1-\frac{\chi^{\ell_0}(p)}{p}\right) \gg \frac{1}{\sqrt{q}\log q}.
$$
%for large enough $q$. 
Therefore, by the preceding estimates and orthogonality,
% (varying the $O(1)$ term as needed),
\begin{equation}\label{eq:LowBdProdL1s}
\prod_{1 \leq \ell \leq d} |L(1+1/\log q,\chi^\ell)| \geq \frac{1}{\sqrt{q}\log q} \exp\left(\sum_{1 \leq \ell \leq d} \left(\sum_{p \leq q} \frac{\chi^\ell(p)}{p} + O(1)\right)\right) = \frac{1}{\sqrt{q}\log q}\exp\left(d\left(\sum_{\ss{p \leq q \\ \chi(p) = 1}} \frac{1}{p} + O(1)\right)\right).
\end{equation}
For the second estimation of \eqref{eq:L1ProdEst} we invoke zero-density estimates. Let $\theta \in (0,c)$ be fixed, and set 
$$
\sg_0 := 1-\frac{1}{(\log q)^{1-\theta}}, \quad T := (\log q)^3.
$$ 
We define
\begin{align*}
\mc{G} &:= \mc{G}(\sg_0,T) := \{\chi^\ell\}_{1\leq \ell \leq d-1} \cap \{\psi \pmod{q} : L(s,\psi) \neq 0 \text{ for all } \text{Re}(s) \in (sg_0,1], \, |\text{Im}(s)| \leq T\} \\
\mc{B} &:= \mc{B}(\sg_0,T) := \{\chi^\ell\}_{1 \leq \ell \leq d} \bk \mc{G}.
\end{align*}
%An elementary truncation argument, see e.g. \cite[(4.1)]{GraMan}, shows that for any $1 \leq \ell<d$,
If $1 \leq \ell \leq d-1$ then by e.g. \cite[Thm. II.8.18]{Ten},
$$
|L(1+ 1/\log q,\chi^\ell)| \ll \log q.
$$
When $\ell = d$ this bound is also valid since by Mertens' theorem,
$$
|L(1+1/\log q, \chi_0)| \ll \exp\left(\sum_p \frac{1}{p^{1+1/\log q}}\right) \ll \exp\left(\sum_{p \leq q} \frac{1}{p}\right) \ll \log q.
$$
Therefore, there is an absolute constant $C > 0$ such that 
\begin{align} \label{eq:uppBdProdL1s}
\prod_{1 \leq \ell \leq d} |L(1+1/\log q,\chi^\ell)| \leq \prod_{\ss{1 \leq \ell \leq d \\ \chi^\ell \in \mc{G}}} |L(1+1/\log q, \chi^\ell)| \cdot (C \log q)^{|\mc{B}|}.
\end{align}
Since $(1-\sg_0)\log q \gg 1$, replacing $1$ with $1 + 1/\log q$ and following the proof of \cite[Lem. 5.4]{LamMan} \emph{mutatis mutandis}, we find that  for each $\chi^\ell \in \mc{G}$ and any $X \geq 2$,
$$
\log L(1+1/\log q,\chi^\ell)  =  -\sum_{p \leq X} \log\left(1-\frac{\chi^\ell(p)}{p^{1+1/\log q}}\right)	+ O\left(\frac{1}{\log q} + (\log q)^4 e^{-\frac{(\log q)^\theta}{2} \frac{\log X}{\log q}}\right).
$$
Taking $X := \exp(10(\log q)^{1-\theta} \log\log q)$ and bounding the prime sum trivially, we obtain
$$
\log |L(1+1/\log q,\chi^\ell)| \leq \log\log X + O(1) = (1-\theta + o(1))\log\log q,
$$
for each $\chi^\ell \in \mc{G}$. Furthermore, increasing $C$ if needed, the log-free zero density estimate \cite[(18.9)]{IK} shows that 
%for some absolute constant $C > 0$,
$$
|\mc{B}| \ll (qT)^{C(1-\sg_0)} \leq e^{2C(\log q)^{\theta}}.
$$
As $\theta < c$, we obtain $|\mc{B}| = o(d)$ when $q$ is sufficiently large.
%, as $\theta < c$. 
Invoking both of these estimates in \eqref{eq:uppBdProdL1s}, we deduce that
$$
\prod_{1 \leq \ell \leq d} |L(1+1/\log q,\chi^\ell)| \leq (C \log q)^{(1-\theta + o(1))(d-|\mc{B}|)} (C \log q)^{|\mc{B}|} \leq \left(C (\log q)^{1-\theta + \theta \tfrac{|\mc{B}|}{d} + o(1)}\right)^d = (\log q)^{d(1-\theta+o(1))}.
$$
Comparing this with \eqref{eq:LowBdProdL1s}, we find
\begin{align*}
\frac{1}{\sqrt{q}\log q} \exp\left(d\left(\sum_{\ss{p \leq q \\ \chi(p) = 1}} \frac{1}{p} + O(1)\right)\right) &\leq \prod_{1 \leq \ell \leq d} |L(1+1/\log q,\chi^\ell)| \\
&\leq (\log q)^{d(1-\theta+o(1))} = \exp\left(d (1-\theta + o(1))\sum_{p \leq q} \frac{1}{p}\right).
\end{align*}
Since $(\sqrt{q}\log q)^{1/d} \ll 1$, on taking $d$th roots and rearranging we obtain
$$
\sum_{\ss{p \leq q \\ \chi(p)\neq 1}} \frac{1}{p} = \sum_{p \leq q} \frac{1}{p} - \sum_{\ss{p \leq q \\ \chi(p) = 1}} \frac{1}{p} \geq (\theta - o(1)) \log\log q.
$$
As this bound holds for every $\theta \in (0,c)$ the claim follows. 
\end{proof}
%By invoking a theorem of Hildebrand, we can make this result slightly stronger.
%\begin{lem} \label{lem:HilApp}
%Let $\eta, \delta > 0$. Then there is a constant $C = C(\eta,\delta)$ such that if $\chi$ is a primitive character modulo $q$ prime with order $d \geq C$ then for any $x \geq p^{\delta}$ we have
%$$
%\sum_{\ss{p \leq x \\ \chi(p) \neq 1}} \frac{1}{p} \geq \log(1/\eta).
%$$
%\end{lem}
\begin{proof}[Proof of Theorem \ref{thm:HilApp}]
The conclusion is strongest when $x = x_0^\delta$, so we assume this in what follows. \\
(a) Write 
$$
E_{\neq 0, 1}(x) := \sum_{\ss{p \leq x \\ f(p) \neq 0,1}} \frac{1}{p},
$$ 
and as above set
%Now set
$$
c_f = c_f(x_0) := \prod_{\ss{p \leq x_0 \\ f(p) = 0}} \left(1-\frac{1}{p}\right).
$$
%to denote the expression on the RHS above. \\
%We will give upper and lower bounds for the cardinality $|\{n \leq x_0 : f(n) = 1\}|$. \\
Since $f$ weakly equidistributes at scale $x_0$, 
%by definition we have
\begin{equation}\label{eq:levelsetUpp}
|\{n \leq x_0 : f(n) = 1\}| \leq 100c_f\frac{x_0}{d}.
% \text{ for all } x \geq x_0.
\end{equation}
We now derive a corresponding lower bound. Let $g$ be the non-negative completely multiplicative function defined at primes by
$$
g(p) := \begin{cases} 1 \text{ if } p \leq x \text{ and } f(p) = 1 \\ 0 \text{ otherwise.} \end{cases}
$$
%$g(n) := 1_{P^+(n) \leq x_0^{\delta}} 1_{p|n \Rightarrow f(p) = 1}$ 
We then observe that
\begin{equation}\label{eq:levelsetLow}
|\{n \leq x_0 : f(n) = 1\}| \geq \sum_{n \leq x_0} g(n).
\end{equation}
%As $g$ is 1-bounded and non-negative, 
Now if $x_0$ is large enough then by \cite[Thm. 2]{HilMV} there are absolute constants $A,\beta > 0$ such that 
%if $x_0$ is large enough then
\begin{equation}\label{eq:HilBd}
\sum_{n \leq x_0} g(n) \geq A x_0\prod_{\ss{p \leq x_0 \\ f(p) = 0}} \left(1-\frac{1}{p}\right) \cdot \prod_{\ss{p \leq x_0 \\ f(p) \neq 0}}\left(1-\frac{1}{p}\right)\left(1-\frac{1_{f(p) = 1} 1_{p \leq x}}{p}\right)^{-1} \cdot \left(\sg_-\Big(e^{\mb{D}(g,1;x_0)^2}\Big) + O(e^{-(\log x_0)^\beta})\right),
\end{equation}
where $\sg_-(u) := u \rho(u)$. 
Evaluating the various factors in this expression, we see that 
\begin{align*}
&x_0\prod_{\ss{p \leq x_0 \\ f(p) = 0}} \left(1-\frac{1}{p}\right)\cdot \prod_{\ss{p \leq x_0 \\ f(p) \neq 0}}\left(1-\frac{1}{p}\right)\left(1-\frac{1_{f(p) = 1} 1_{p \leq x}}{p}\right)^{-1} \\
&\gg c_f x_0\exp\left(-\sum_{\ss{p \leq x \\ f(p) \neq 0}} \frac{1-1_{f(p) = 1}}{p} - \sum_{x < p \leq x_0} \frac{1}{p} \right) \gg c_f x_0 \delta e^{-E_{\neq 0, 1}(x)}, 
\end{align*}
and also
\begin{align*}
e^{\mb{D}(g,1;x_0)^2} = \exp\left(\sum_{\ss{p \leq x_0 \\ f(p) = 0}} \frac{1}{p} + \sum_{x < p \leq x_0} \frac{1}{p} + E_{\neq 0,1}(x)\right) \ll (c_f \delta)^{-1}e^{E_{\neq 0, 1}(x)}.
\end{align*}
We set $c_3 = \beta$, take $C_3 > 0$ to be a large constant and assume that
\begin{equation}\label{eq:dUppbdCons}
d \leq C_3(\eta \delta)^{-1} e^{(\log x_0)^{c_3}}.
\end{equation}
Suppose for the sake of contradiction
%the conclusion of the proposition fails with $x = x_0^{\delta}$ and $c_3 = \beta$, 
that $E_{\neq 0,1}(x) < \log(1/\eta)$. Since $\sg_-(u)$ is a decreasing function of $u$, there is an absolute constant $C' > 0$ such that
$$
\sg_-(e^{\mb{D}(1,g;x_0)^2}) \geq \sg_-\left(c_f^{-1}\frac{C'}{\eta \delta}\right).
$$
Thus on combining \eqref{eq:levelsetUpp}, \eqref{eq:levelsetLow} and \eqref{eq:HilBd} we find that for some absolute constant $B > 0$,
%chain of estimates
\begin{equation}\label{eq:lowBdgSum}
c_f \frac{x_0}{d} \geq \frac{1}{100}\sum_{n \leq x_0} g(n) \geq B c_f x_0 \eta \delta  \left(\sg_-\left(c_f^{-1}\frac{C'}{\eta \delta}\right) + O(e^{-(\log x_0)^\beta})\right).
\end{equation}
If $C_3$ is large enough relative to $B$, 
%\eqref{eq:lowBdgSum}, 
%the error term here may be deleted
%$d \leq C_5 (\eta \delta)^{-1} e^{(\log x_0)^\beta}$, with $C_5 > 0$ sufficiently small, 
the error term in \eqref{eq:lowBdgSum} will contribute $\leq \tfrac{1}{2}c_f x_0/d$ to the right-hand side.
%$$
%\ll c_fx_0\eta \delta e^{-(\log x_0)^\beta} \ll C_5\frac{c_fx_0}{d}.
%$$
Thus, rearranging \eqref{eq:lowBdgSum} and suitably modifying the implicit constant, this term may be deleted from \eqref{eq:lowBdgSum}. Incorporating the definition of $\sg_-(u)$,
% in \eqref{eq:lowBdgSum}, 
we thus get 
$$
c_f \frac{x_0}{d}
%\sum_{n \leq x_0} g(n) 
\gg c_f x_0 \eta \delta \cdot c_f^{-1}\frac{1}{\eta \delta} \rho\left(c_f^{-1} \frac{C'}{\eta \delta}\right) \gg x_0 \rho\left(c_f^{-1} \frac{C'}{\eta\delta}\right).
$$
Since $f \in \mc{M}_1(x_0;d)$ by assumption we have $c_f \gg 1$, whence
% we deduce that
$$
d \ll \rho(\tfrac{C''}{\eta \delta})^{-1}
$$
for some absolute $C'' > 0$. Choosing $C_1,C_2 > 0$ large enough, we obtain a contradiction whenever $d \geq C_1\rho\Big(\tfrac{C_2}{\eta\delta}\Big)$, and part (a) follows. \\
(b) Assume next that $d \geq C_3(\eta \delta)^{-1} e^{(\log x)^{\beta}}$ and $f = \chi$ a primitive character modulo prime $q$, of order $d$ (taking $x_0 = q^{3/2+\e}$, say). If $q$ is large enough then for any $c \in (0,\beta)$ we have $d \geq e^{(\log q)^c}$. Thus, by Proposition \ref{prop:HilAppChar},
% we obtain
$$
\sum_{\ss{p \leq x \\ \chi(p) \neq 0,1}} \frac{1}{p} \geq \sum_{\ss{p \leq q \\ \chi(p) \neq 1}} \frac{1}{p} - \sum_{x < p \leq q} \frac{1}{p} - \sum_{p : \chi(p) = 0} \frac{1}{p} \geq (\beta - o(1)) \log\log q - \log(1/\delta).
$$
Since $\log(1/(\eta\delta)) \leq c\log\log q$ for some $0 < c < \beta$, once $q$ is large enough we have
% we may lower bound this last expression by
$$
\geq \log(1/\eta) + \Big( (\beta - o(1)) \log\log q - \log(1/\eta \delta)\Big) \geq \log(1/\eta),
$$
%for large enough $q$, 
and the claim follows.
\end{proof}

\begin{proof}[Proof of Theorem \ref{thm:meanSqShortSums}]
We may assume that $d$ is larger (and thus $\delta$ is smaller) than any fixed constant, otherwise $G(d) \ll 1$ and the claimed bound is trivial. \\
Taking $x_0 = q^{3/2+\e}$ we have $\chi \in \mc{M}_1(x_0;d)$. Let $\eta = \delta$ and $0 < c_2 < c_3/2$, so that $\eta\delta > (\log q)^{-c}$ for some $c \in (0,c_3)$. Since $1/\delta^2 \ll c_2^2\frac{\log d}{\log\log d}$ and taking $c_2 > 0$ smaller if needed, by \eqref{eq:DickmanLB} we find
$$
d \geq C_1 \rho(C_2/(\eta\delta')), \quad \delta' := 2\delta/3.
$$
As in the proof of Theorem \ref{thm:levelSet} we have
$$
\log(1/\eta) \gg \log\log d,
$$ 
and so by Theorem \ref{thm:HilApp}(b),
$$
\sum_{\ss{p \leq x \\ \chi(p) \neq 0,1}} \frac{1}{p} \geq \log(1/\eta)  \gg \log\log d, \quad x > q^{\delta}.
$$
%for $x > q^\delta$. 
We thus conclude that
$$
\Sigma = \min\Big\{P^-(d), 2 + \sum_{\ss{p \leq x \\ \chi(p) \neq 0,1}} p^{-1}\Big\} \gg \min \{P^-(d), \log\log(ed)\} = G(d).
$$
The theorem now follows from Theorem \ref{thm:meanSqShortSumsParams}.
\end{proof}

\section{Improvements to P\'{o}lya-Vinogradov on Average} \label{sec:PVImp}
Let $\e > 0$ and let $\chi$ be a primitive character of order $d$ modulo a prime $q$. We define
$$
\Xi_d(\e) := \{\ell \in \mb{Z}/d\mb{Z} : \, \ell \neq 0, \, M(\chi^\ell) > \e \sqrt{q}\log q\}.
$$
To prove Theorem \ref{thm:PVImpAvg} we will show, using work of Granville and Soundararajan \cite{GSPret}, that the iterated sumsets of $\Xi_d(\e)$ satisfy rigid conditions; see Lemma \ref{lem:sumFree} below. When $d$ has no small prime factors this rigidity puts limits on the size of $\Xi_d(\e)$.
% at least when $d$ has no small prime factors. 
\begin{prop}\label{prop:smallXiD}
Let $q \geq 10$ be large. Then there is an absolute constant $C > 0$ such that if $k \geq 1$ and $\e > 0$ satisfy
$$
k \leq \frac{\log\log q}{10\log\log\log q}, \quad \e \geq C (\log q)^{-\tfrac{1}{3(2k+1)^2}}
$$
then
$$
|\Xi_d(\e)| \ll d\left(\frac{1}{k} + \frac{1}{P^-(d)}\right).
$$
\end{prop}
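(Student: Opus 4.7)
The plan is to argue by contrapositive: I will assume $|\Xi_d(\e)| > C'(d/k + d/P^-(d))$ for a sufficiently large absolute constant $C'$, and derive a contradiction. The key structural fact to be established beforehand is Lemma~\ref{lem:sumFree}, which asserts (in effect) that under the hypothesis $\e \geq C(\log q)^{-1/(3(2k+1)^2)}$ no non-trivial signed $(2k+1)$-term relation
\[
\epsilon_1 \ell_1 + \cdots + \epsilon_{2k+1}\ell_{2k+1} \equiv 0 \pmod{d}
\]
can hold with $\ell_i \in \Xi_d(\e)$ and $\epsilon_i \in \{\pm 1\}$. A preliminary observation is that $\Xi_d(\e)$ is symmetric in $\mb{Z}/d\mb{Z}$, since $M(\chi^\ell) = M(\bar{\chi^\ell}) = M(\chi^{-\ell})$, which keeps us within the setting of Corollary~\ref{cor:CDGen}.

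The analytic input feeding into Lemma~\ref{lem:sumFree} comes from a slight extension of the Granville--Soundararajan pretentious treatment of the P\'olya--Vinogradov inequality \cite{GSPret}: for each $\ell \in \Xi_d(\e)$ there exists $|t_\ell| \ll 1$ with $\mb{D}(\chi^\ell, n^{it_\ell}; q)^2 \ll \log(1/\e)$. The primitivity of $\chi$ modulo a prime is crucial here, as it rules out competing Dirichlet twists of smaller conductor and leaves $n^{it}$ as the only possible pretentious approximant. Applying the triangle inequality \eqref{eq:pretTri} to a signed sum of $2k+1$ such estimates yields
\[
\mb{D}\bigl(\chi^{\sum_i \epsilon_i \ell_i},\; n^{i\sum_i \epsilon_i t_i};\; q\bigr) \ll (2k+1)\sqrt{\log(1/\e)}.
\]
If the exponent of $\chi$ vanishes modulo $d$, the left-hand character is principal, and then the Vinogradov--Korobov-type lower bound $\mb{D}(1,n^{it};q)^2 \geq \tfrac{1}{4}\log\log q$ for $|t|$ bounded away from $0$ (cf.\ the proof of Proposition~\ref{prop:approxHom}) forces $|\sum_i \epsilon_i t_i|$ to be vanishingly small, ultimately producing a contradiction precisely when $(2k+1)^2 \log(1/\e) \ll \log\log q$, i.e.\ when $\e \geq C(\log q)^{-1/(3(2k+1)^2)}$. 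This is the content of Lemma~\ref{lem:sumFree}.

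Given Lemma~\ref{lem:sumFree}, the proof is concluded by an additive-combinatorial argument. Writing $A := \Xi_d(\e)$, the standing assumption $|A| > C' d/P^-(d)$ ensures (by the argument at the end of the proof of Corollary~\ref{cor:CDGen}) that $A$ is not contained in any coset of a proper subgroup of $\mb{Z}/d\mb{Z}$. A direct Cauchy--Davenport--Kneser-type lower bound for iterated sumsets then forces $|(2k+1)A| = d$ as soon as $|A| > 2d/(2k+1)$; since $A$ is symmetric, this gives $0 \in (2k+1)A$ via a genuinely non-trivial signed relation, contradicting Lemma~\ref{lem:sumFree}. The main obstacle will be the clean formulation and verification of Lemma~\ref{lem:sumFree} — in particular, ruling out trivial cancellations (pairing $\ell$ with $-\ell$, which is the reason we work with an \emph{odd} number $2k+1$ of summands) and confirming that the $(2k+1)^2$ loss in the triangle-inequality step is exactly what drives the final exponent $1/(3(2k+1)^2)$ in the hypothesis on $\e$.
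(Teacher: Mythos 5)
Your high-level plan matches the paper's: establish a forbidden-sumset statement for $A := \Xi_d(\e)$ (Lemma~\ref{lem:sumFree}) and then use additive combinatorics to bound $|A|$. However, the analytic input you propose for Lemma~\ref{lem:sumFree} is genuinely wrong.

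You claim that $M(\chi^\ell) > \e\sqrt{q}\log q$ forces $\mb{D}(\chi^\ell, n^{it_\ell};q)^2 \ll \log(1/\e)$ for some bounded $t_\ell$, and then you try to close the argument with a Vinogradov--Korobov lower bound for $\mb{D}(1,n^{it};q)^2$. But archimedean twists $n^{it}$ are the wrong family for maximal character sums: the Granville--Soundararajan theory of $M(\chi)$ shows instead that a large maximal sum forces $\chi^\ell$ to pretend to be a primitive character $\xi_\ell$ of conductor $m_\ell \ll (\log q)^{O(1)}$ having \emph{opposite parity} to $\chi^\ell$ — precisely the content of \cite[Prop.~3]{GraMan} and \cite[Lem.~4.3]{GSPret} used in Lemma~\ref{lem:GSLargeG}. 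The statement ``primitivity of $\chi$ rules out competing Dirichlet twists of smaller conductor and leaves $n^{it}$ as the only pretentious approximant'' has this exactly backwards: the small-conductor twists of opposite parity are unavoidable (e.g.\ $\xi = 1$ or a character mod $3$ or $4$ of the required parity is always available), and they are what make $M(\chi)$ as large as $\sqrt{q}\log q$ in the first place. Consequently, your Vinogradov--Korobov step has nothing to act on: if $\sum_i \e_i \ell_i \equiv 0 \pmod d$, the character $\chi^{\sum_i \e_i \ell_i}$ is principal, and what you must bound is the distance of $\prod_i \xi_{\ell_i}^{\e_i}$ from $1$, not of $n^{i\sum \e_i t_{\ell_i}}$. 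The paper's non-vanishing argument is a \emph{parity} argument: with $g = 2k+1$ odd and each $\xi_j(-1) = -\chi^{a_j}(-1)$, the product $\xi = \prod_j \xi_j$ satisfies $\xi(-1) = (-1)^{2k+1}\chi_0(-1) = -1$, so $\xi$ is odd and hence non-principal, and Siegel--Walfisz then gives $\mb{D}(1,\xi;q)^2 = (1+o(1))\log\log q$. The oddness of $2k+1$ is therefore needed for parity, not (as you suggest) merely to rule out $\ell$ pairing with $-\ell$.

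A secondary, less fundamental point concerns your combinatorial step. You invoke a ``Cauchy--Davenport--Kneser-type'' lower bound to force $(2k+1)A = \mb{Z}/d\mb{Z}$ once $|A| > 2d/(2k+1)$ and $A$ is not trapped in a coset of a proper subgroup. For $d$ prime this is essentially Cauchy--Davenport, but for composite $d$ the Kneser stabilizer genuinely interferes with iterated sumset growth and one has to track the subgroup structure carefully. The paper sidesteps this by citing the Bajnok and Hamidoune--Plagne theorem on maximal $(k,\ell)$-sets, which yields the explicit bound $|A| \leq \max_{f|d}\frac{d}{f}\left(1 + \llf \frac{f-2}{2k+1}\rrf\right) \ll d\left(\frac{1}{k} + \frac{1}{P^-(d)}\right)$ directly, with the subgroup contribution built in. Your route could be made rigorous but would require reproving that bound; as written it is too informal.
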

To prove Proposition \ref{prop:smallXiD} we need the following (slight) extension of \cite[Thm. 2]{GSPret} that gives a precise dependence of the bound on the number of characters involved.
% the case that this grows with $q$.
\begin{lem}\label{lem:GSLargeG}
Let $q \geq 10$ be large and let $3 \leq g \leq \frac{\log \log q}{\log\log\log q}$ be odd. Then there is an absolute constant $C_0 > 0$ such that the following holds. \\
Let $\chi_1,\ldots,\chi_g$ be primitive characters with respective conductors $q_j \leq q$ and for which 
$$
M(\chi_j) > \sqrt{q_j}(\log q_j)^{1-1/g} \text{ for all } 1 \leq j \leq g.
$$ 
Suppose in addition that $\chi_1\cdots \chi_g$ is principal. Then as $q \ra \infty$,
$$
\prod_{1 \leq j \leq g} \frac{M(\chi_j)}{\sqrt{q_j}\log q} \leq C_0^g (\log q)^{-\tfrac{1+o(1)}{2g}}.
$$
\end{lem}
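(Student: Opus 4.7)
The plan is to extend Granville and Soundararajan's proof of \cite[Thm. 2]{GSPret} by tracking carefully how each constant depends on $g$, which is now allowed to grow with $q$ (up to $\log\log q/\log\log\log q$). The guiding principle is that their original argument controls an arbitrary fixed product of characters, and the savings come from the constraint $\chi_1\cdots\chi_g = \chi_0$; the refinement is purely quantitative.

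First, I would apply the structure theorem underlying \cite{GSPret} (cf. their Thm. 2.1 and 2.3) to each $\chi_j$ individually. The hypothesis $M(\chi_j) > \sqrt{q_j}(\log q_j)^{1-1/g}$ forces $\chi_j$ to be pretentious: there exist a primitive character $\xi_j$ of conductor $\mathfrak{q}_j$ and a real number $t_j$, with $\mathfrak{q}_j(1+|t_j|) \leq (\log q)^{A/g}$ for some absolute $A$, such that
$$
\mathbb{D}(\chi_j, \xi_j n^{it_j}; q)^2 \leq \tfrac{2}{g}\log\log q + O(1),
$$
and such that one has a quantitative bound of the form
$$
\frac{M(\chi_j)}{\sqrt{q_j}\log q} \ll \frac{|L(1,\xi_j)|}{\log q}\,(1+|t_j|)^{-1}\exp\!\bigl(-\mathbb{D}(\chi_j,\xi_j n^{it_j};q)^2\bigr)\cdot C,
$$
with $C>0$ absolute. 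The condition $g \leq \log\log q/\log\log\log q$ guarantees that the conductors $\mathfrak{q}_j$ still satisfy $\mathfrak{q}_j \leq (\log q)^{o(1)}$, keeping the auxiliary characters structured.

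Next, I would exploit the product constraint $\chi_1\cdots\chi_g = \chi_0$. By the pretentious triangle inequality \eqref{eq:pretTriProd} applied to the product, the character $\Xi := \xi_1\cdots\xi_g \cdot n^{i(t_1+\cdots+t_g)}$ is forced to be pretentious to the principal character of modulus $q$. Using the standard lower bound $\mathbb{D}(1,\psi n^{it};q)^2 \geq (1+o(1))\log\log q$ whenever $\psi$ is a non-trivial primitive character of conductor $\leq (\log q)^{o(1)}$ or $|t|\log q \gg 1$, this forces $\xi_1\cdots\xi_g$ to be \emph{exactly} principal and $|\sum_j t_j| = o(1/\log q)$. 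Granted this, one can apply orthogonality/character sum identities to the product of $L$-values:
$$
\prod_{j=1}^g |L(1,\xi_j)| \ll (\log \log q)^{O(g)},
$$
uniformly under the product-trivial constraint, since the $\xi_j$ are correlated characters of small conductor.

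Finally, I would take the product of the per-character bounds and combine:
$$
\prod_{j=1}^g \frac{M(\chi_j)}{\sqrt{q_j}\log q} \ll C^g\cdot\frac{\prod_j |L(1,\xi_j)|}{(\log q)^g}\cdot\exp\!\Bigl(-\sum_{j=1}^g \mathbb{D}(\chi_j,\xi_j n^{it_j};q)^2\Bigr).
$$
The sum of squared pretentious distances is then bounded below by $(1+o(1))\tfrac{1}{2}\log\log q$ using a Parseval-type identity for $\sum_p p^{-1}$ together with the product constraint: since $\prod_j \chi_j\overline{\xi_j}n^{-it_j}$ is close to $1$ while each factor is close to $1$ only on average, the total squared distance must saturate at least half of the trivial bound $g\cdot(\tfrac{2}{g}\log\log q) = 2\log\log q$. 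Distributed across the $g$ factors in the product, this yields the claimed saving of $(\log q)^{-(1+o(1))/(2g)}$, while the auxiliary $L$-values and error constants contribute only $C_0^g$.

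The main obstacle is the Parseval-type lower bound in the last step. In the fixed-$g$ GS argument, a factor of $g$ is harmless, but here one must avoid introducing any super-polynomial dependence on $g$, while simultaneously preserving the factor of $\tfrac{1}{2}$ in the exponent (a loss of even a constant factor would weaken the corollary). This quantitative propagation of the product constraint through the pretentious distance inequalities — rather than through Cauchy--Schwarz, which is too wasteful — is the technically delicate point.
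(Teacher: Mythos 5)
There is a genuine gap, and in fact the logic runs in the wrong direction at the crucial step. The paper's proof (extending \cite[Thm.~2]{GSPret}) rests on the observation that the auxiliary characters $\xi_j$ produced by \cite[Prop.~3]{GraMan} satisfy $\xi_j(-1)=-\chi_j(-1)$, so that $\xi:=\xi_1\cdots\xi_g$ has $\xi(-1)=(-1)^g\chi_1\cdots\chi_g(-1)=-1$ and is therefore \emph{non-trivial} — this is precisely where the hypothesis that $g$ is odd enters, a hypothesis your proposal never invokes. Siegel--Walfisz then gives $\mb{D}(1,\xi;q)^2\geq(1+o(1))\log\log q$, and the pretentious triangle inequality together with Cauchy--Schwarz yields $\sum_j\mb{D}(\chi_j,\xi_j;q)^2\geq\tfrac1g\mb{D}(1,\xi;q)^2$. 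This \emph{lower} bound on the sum of squared distances, substituted into the product of the bounds $M(\chi_j)/(\sqrt{q_j}\log q)\leq C_0 e^{-\tfrac12\mb{D}(\chi_j,\xi_j;q)^2}$, is what produces the saving $(\log q)^{-(1+o(1))/(2g)}$.

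Your proposal instead argues that the triangle inequality forces $\Xi=\xi_1\cdots\xi_g\,n^{i\sum t_j}$ to be $1$-pretentious and hence $\xi_1\cdots\xi_g$ to be \emph{principal}. This does not follow: since each $\mb{D}(\chi_j,\xi_j n^{it_j};q)^2\leq\tfrac2g\log\log q$, summing over $j$ and applying the triangle inequality only yields $\mb{D}(1,\Xi;q)\leq\sum_j\mb{D}(\chi_j,\xi_j n^{it_j};q)\leq\sqrt{2g\log\log q}$, i.e.\ $\mb{D}(1,\Xi;q)^2\leq 2g\log\log q$, which is weaker than the trivial bound $2\log\log q$ once $g\geq1$. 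And if $\xi_1\cdots\xi_g$ \emph{were} principal, the argument would collapse: $\mb{D}(1,\xi;q)^2$ would then be $O(\log\log\log q)$ rather than $\asymp\log\log q$, and no power-of-$\log q$ saving would survive. Your final ``Parseval-type lower bound'' — that the total squared distance ``must saturate at least half of the trivial bound'' — is exactly the quantity the proof needs to bound from \emph{below}, but the mechanism you propose (propagating that each factor is close to $1$) gives an \emph{upper} bound on it, which is of no use. The correct mechanism is the oddness of $\xi$, not any closeness argument among the factors.
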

\begin{proof}
By \cite[Prop. 3]{GraMan}, there is an absolute constant $A > 0$ such that for each $1 \leq j \leq g$ there are: 
\begin{enumerate}[(i)]
\item a primitive character $\xi_j \pmod{m_j}$ with $m_j \leq (\log q)^{\tfrac{2}{g}}(\log\log q)^4$ and $\xi_j(-1) = -\chi_j(-1)$; and 
\item a scale $1 \leq N_j \leq q_j$, such that
$$
M(\chi_j) \leq A \frac{\sqrt{q_jm_j}}{\phi(m_j)} \left|\sum_{n \leq N_j} \frac{\chi_j\bar{\xi}_j(n)}{n}\right|.
$$
\end{enumerate}
By \cite[Lem. 4.3]{GSPret}, there is an absolute constant $B > 0$ such that
$$
\left|\sum_{n \leq N_j} \frac{\chi_j\bar{\xi}_j(n)}{n}\right| \leq B (\log N_j) e^{-\frac{1}{2}\mb{D}(\chi_j,\xi_j;N_j)^2} \leq B (\log q)e^{-\frac{1}{2}\mb{D}(\chi_j,\xi_j;q)^2},
$$
using in the last step the fact that for any sequence $\{f(p)\}_p \subset \mb{U}$ the map
$$
t \mapsto \sum_{p \leq t} \frac{2-(1-\text{Re}(f(p)))}{p}
$$
is non-decreasing. Choosing $D \geq 1$ absolute so that $\sqrt{m} \leq D\phi(m)$ for all $m \geq 1$ and then setting $C_0 := ABD$, we deduce that
$$
\frac{M(\chi_j)}{\sqrt{q_j}\log q} \leq C_0 e^{-\frac{1}{2}\mb{D}(\chi_j,\xi_j;q)^2}.
$$
Taking the product over $1 \leq j \leq g$ and using the inequality
$$
\sum_{1 \leq j \leq g} \mb{D}(\chi_j,\xi_j;q)^2 \geq \frac{1}{g}\mb{D}(1,\xi_1\cdots \xi_g;q)^2
$$
from the proof of \cite[Lem. 3.3]{GSPret}, we thus obtain
\begin{equation}\label{eq:prodMchis}
\prod_{1 \leq j \leq g} \frac{M(\chi_j)}{\sqrt{q_j}\log q} \leq C_0^g e^{-\frac{1}{2g}\mb{D}(1,\xi_1\cdots \xi_g;q)^2}.
\end{equation}
Set now $\xi := \xi_1\cdots \xi_g$, which is non-trivial since $\xi(-1) = (-1)^g \chi_1\cdots \chi_g(-1) = -1$. The conductor of $\xi$ is 
$$
M := [m_1,\ldots,m_g] \leq m_1\cdots m_g \leq (\log q)^2 (\log\log q)^{4g} \leq (\log q)^6,
$$
by our hypothesis on $g$. Thus, by the Siegel-Walfisz theorem (as in e.g. \cite[(3.1)]{GSPret}) we obtain
\begin{align*}
\mb{D}(1,\xi;q)^2 &\geq \sum_{M < p \leq q} \frac{1-\text{Re}(\xi(p))}{p} \geq \sum_{\ss{a \pmod{M} \\ (a,M) = 1}} (1-\text{Re}(\xi(a))) \sum_{\ss{M < p \leq q \\ p \equiv a \pmod{M}}} \frac{1}{p}  \\
&= \frac{1+o(1)}{\phi(M)} \left(\phi(M) - \text{Re}\left(\sum_{a \pmod{M}} \xi(a)\right)\right)\log\log q \\
&= (1+o(1))\log\log q.
\end{align*}
We hence deduce from \eqref{eq:prodMchis} that as $q \ra \infty$,
$$
\prod_{1 \leq j \leq g} \frac{M(\chi_j)}{\sqrt{q_j}\log q} \leq C_0^g (\log q)^{-\tfrac{1+o(1)}{2g}},
$$
as claimed.
\end{proof}
%In the sequel, set $r_k := \frac{1}{3(2k+1)^2}$.
 
\begin{lem} \label{lem:sumFree}
%Let $\e > 0$ satisfy $\frac{\log(1/\e)}{\log\log q} \ra 0$ as $q \ra \infty$, and let $k \geq 1$. 
Let $q \geq 10$ be large. Then there is an absolute constant $C \geq 1$ such that whenever $1 \leq k \leq \tfrac{\log \log q}{10\log\log\log q}$ and $C (\log q)^{-\tfrac{1}{3(2k+1)^2}} < \e < 1$,
$$
2k \Xi_d(\e) \cap \Xi_d(\e) = \emptyset.
$$
\end{lem}
\begin{proof}
For ease of notation, write $A := \Xi_d(\e)$. Suppose for the sake of contradiction that $a \in 2kA \cap A$. Then we can find $a_1,\ldots,a_{2k} \in A$ such that
\begin{equation}\label{eq:aRep}
a_1 + \cdots + a_{2k} - a \equiv 0 \pmod{d}.
% \quad a_j \in A \text{ for all } 1 \leq j \leq 2k.
\end{equation}
For convenience, write $a_{2k+1} \equiv -a \pmod{d}$, noting that $M(\chi^{a_{2k+1}}) = M(\chi^a)$. \\
Since $0 \notin A$ and $q$ is prime, $\chi^{a_j}$ is primitive for all $1 \leq j \leq 2k+1$. Also, by \eqref{eq:aRep}, 
$\chi^{a_1} \cdots \chi^{a_{2k+1}}$ is principal. 
Finally, since $\e \log q \geq (\log q)^{1-1/(2k+1)}$, Lemma \ref{lem:GSLargeG} yields
$$
\prod_{1 \leq j \leq 2k+1} \frac{M(\chi^{a_j})}{\sqrt{q}\log q} \leq C_0^{2k+1} (\log q)^{-\tfrac{1+o(1)}{2(2k+1)}}.
$$
On the other hand, since $a_j \in A$ for all $1 \leq j \leq 2k+1$ (as $A$ is symmetric), we obtain
$$
\prod_{1 \leq j \leq 2k+1} \frac{M(\chi^{a_j})}{\sqrt{q}\log q} > \e^{2k+1} \geq C^{2k+1} (\log q)^{-\tfrac{1}{3(2k+1)}}.
$$
This gives a contradiction whenever $C \geq C_0$ and $q$ is large enough, and the claim follows.
\end{proof}

Let $G$ be a finite Abelian group, written additively. For $k,\ell\geq 1$ we say that $A \subseteq G$ is a \emph{$(k,\ell)$-set} if $kA \cap \ell A = \emptyset$. Lemma \ref{lem:sumFree} shows that, under the claimed constraints on $k$ and $\e$,  $\Xi_d(\e)$ is a $(2k,1)$-set. It is clear that if $B \subseteq A$ and $A$ is a $(k,\ell)$-set then $B$ is also a $(k,\ell)$-set, so that $(k,\ell)$-sets form a partially-ordered set under inclusion. We say that $A$ is a \emph{maximal $(k,\ell)$-set} if $A$ is maximal with respect to this partial order.
\begin{thm1}[Bajnok \cite{Baj}, Thm. 3; Hamidoune-Plagne \cite{HamPla}, Thm. 2.4]
Let $k,\ell, n \geq 1$ with $k \neq \ell$. Suppose $G = \mb{Z}/n\mb{Z}$, and let $A \subseteq G$ be a maximal $(k,\ell)$-set. Then
$$
|A| \leq \max_{f|n} \frac{n}{f} \left(1+ \llf \frac{f-2}{k+\ell}\rrf\right).
$$
\end{thm1}

\begin{proof}[Proof of Proposition \ref{prop:smallXiD}]
By Lemma \ref{lem:sumFree}, $\Xi_d(\e)$ is a $(2k,1)$ set, and thus must be contained inside of a maximal $(2k,1)$-set. By the Bajnok-Hamidoune-Plagne theorem,
$$
|\Xi_d(\e)| \leq \max_{f|d} \frac{d}{f}\left(1+ \llf \frac{f-2}{2k+1}\rrf\right).
$$
When $f = 1$ the term inside the floor function here is $-1/(2k+1) \in [-1,0)$, and so the expression is $0$. Thus,
$$
|\Xi_d(\e)| \leq d \max_{\ss{f|d \\ f > 1}}\left(\frac{1}{f} + \frac{1}{2k+1}\right).
$$
The right-hand side is maximized at the smallest divisor of $d$ greater than 1, which is $P^-(d)$. The claim thus follows.
\end{proof}

%\begin{cor}
%Let $q$ be a large prime, and let $\chi$ be a primitive character modulo $q$ of order $d$. Then
%$$
%\frac{1}{d}\sum_{1 \leq \ell \leq d-1} M(\chi^\ell) \ll (\sqrt{q}\log q)\left(\left(\frac{\log\log\log q}{\log\log q}\right)^{1/2} + \frac{1}{P^-(d)}\right).
%$$
%\end{cor}
\begin{proof}[Proof of Theorem \ref{thm:PVImpAvg}]
Let $q \geq 10$ be large, and let $1 \leq k \leq \tfrac{\log\log q}{10\log\log \log q}$ be a parameter to be chosen later. Setting $\e := C(\log q)^{-\tfrac{1}{3(2k+1)^2}}$ and splitting the sum over $j$ according to whether or not $j \in \Xi_d(\e)$, Proposition \ref{prop:smallXiD} implies that 
%whenever $\e \geq C(\log q)^{-\tfrac{1}{3(2k+1)^2}}$,
$$
\frac{1}{d}\sum_{1 \leq \ell \leq d-1} M(\chi^\ell) \ll (\sqrt{q}\log q)\left(\e + \frac{|\Xi_d(\e)|}{d}\right) \ll (\sqrt{q}\log q) \left(\e + \frac{1}{k} + \frac{1}{P^-(d)}\right).
$$
%as long as $\e \geq C (\log q)^{-\tfrac{1}{3(2k+1)^2}}$. 
If we set $k := \llf \frac{1}{10}\sqrt{\frac{\log\log q}{\log\log \log q}}\rrf$ then 
$\e 
%:= C (\log q)^{-\frac{1}{3(2k+1)^2}} 
\leq C (\log \log q)^{-\tfrac{1}{2}}$, and the claimed bound follows.
\end{proof}

%In light of the bounds by Granville-Sound, we get the following result which gives savings in all cases whenever $d$ is prime.
\begin{proof}[Proof of Corollary \ref{cor:unifPVImp}]
Note that since $d\geq 3$ is prime, $\chi^\ell$ has odd order $d$ for all $d \nmid \ell$. Applying Lemma \ref{lem:GSLargeG} with $\chi_j = \chi^\ell$ for all $1 \leq j \leq d$, we obtain\footnote{Note that this bound could be trivial if $d \gg (\log\log q)^{1/2}$.} (whether or not $M(\chi^\ell) \leq \sqrt{q}(\log q)^{1-1/d}$)
$$
M(\chi^\ell) = \left(\prod_{1 \leq j \leq d} \frac{M(\chi_j)}{\sqrt{q}\log q}\right)^{\frac{1}{d}} \sqrt{q}\log q \leq C_0(\sqrt{q}\log q) (\log q)^{-\tfrac{1+o(1)}{2d^2}}
$$
for each $1 \leq \ell \leq d-1$. Combined with Theorem \ref{thm:PVImpAvg}, as $q \ra \infty$ this gives
$$
\frac{1}{d}\sum_{1 \leq \ell \leq d-1} M(\chi^\ell) \ll (\sqrt{q}\log q)\min\left\{(\log q)^{-\tfrac{1}{3d^2}}, \left(\frac{\log\log\log q}{\log \log q}\right)^{1/2} + \frac{1}{d}\right\}.
$$
The transition point in these bounds occurs for $d \asymp \sqrt{\frac{\log\log q}{\log\log\log q}}$, and with a suitable implicit constant the claimed upper bound follows.
\end{proof}
\begin{rem}
Let $K := \llf \sqrt{\frac{\log \log q}{100\log(1/\e)}}\rrf$, and for each $1 \leq k \leq K$ let $A_k \subseteq \mb{Z}/d\mb{Z}$ be a maximal $(2k,1)$-set that contains $\Xi_d(\e)$. The Bajnok-Hamidoune-Plagne theorem stated above establishes an upper bound for each $|A_k|$, and therefore also for the rightmost expression in the chain of inequalities
\begin{equation}\label{eq:chainIneq}
|\Xi_d(\e)| \leq |\bigcap_{1 \leq k \leq K} A_k| \leq \min_{1 \leq k \leq K} |A_k|.
\end{equation}
It is reasonable to ask whether there could be a substantial difference in size between the intersection of the $A_k$ and the minimal $|A_k|$. However, note that any symmetric set $A$ satisfies $(2k) A \subseteq (2k+2)A$, as we can express any $m$-fold sum of terms in $A$ as an $(m+2)$-fold sum via
$$
a_1 + \cdots + a_k = a_1 + \cdots + a_k + a' + (-a')
$$
for any $a' \in A$. It follows that any $(2(k+1),1)$-set is also a $(2k,1)$-set, and it is possible therefore that $A_{k+1} \subseteq A_k$, i.e., the sums are nested. In such an event, the latter inequality in \eqref{eq:chainIneq} is sharp.
\end{rem}

\bibliographystyle{plain}
%\bibliographystyle{sn-mathphys}
%\nocite{label}
\bibliography{LargeOrderBib}

\end{document}